\documentclass[]{siamart220329}
\headers{Hybrid hierarchical matrices with adaptive precision storage}{Ritesh Khan, Erin Carson}

\title{Hybrid hierarchical matrices with adaptive mixed precision storage}

\author{Ritesh Khan\thanks{Department of Numerical Mathematics, Faculty of Mathematics and Physics, Charles University, Prague, Czech Republic. 
Emails: \email{ritesh.khan@matfyz.cuni.cz}, \email{carson@karlin.mff.cuni.cz}.  \\
Both authors are supported by the European Union (ERC, inEXASCALE, 101075632) and the Charles University Research Centre program No. UNCE/24/SCI/005}
\and Erin Carson\footnotemark[1]}

\usepackage[T1]{fontenc}
\usepackage{xr}
\usepackage[utf8]{inputenc}
\usepackage{amsfonts}
\usepackage{amsmath,amssymb}
\allowdisplaybreaks
\usepackage[caption=false]{subfig} 
\usepackage{tikz,pgfplots}
\usepackage{multirow}
\usepackage{makecell}
\usepackage{tikz-3dplot}
\usetikzlibrary{math}
\usepackage{bm}
\usepackage{algorithm}
\usepackage{algpseudocode}
\usepackage{graphicx,epstopdf} 
\usetikzlibrary{fadings}
\usepackage{enumerate}
\usepackage{svg}
\usepackage{graphicx}
\usepackage{mathtools}

\usepackage{cleveref}
\usepackage{hyperref}
\usepackage{relsize}
\usepackage{adjustbox}
\usepackage{tcolorbox}
\usepackage{dsfont}
\usepackage{accents}

\usepackage{multirow}
\usepackage{booktabs}
\usepackage{graphicx}

\usepackage{mathrsfs}

\ifpdf
  \DeclareGraphicsExtensions{.eps,.pdf,.png,.jpg}
\else
  \DeclareGraphicsExtensions{.eps}
\fi

\newsiamremark{remark}{Remark}

\usetikzlibrary{arrows} 
\usetikzlibrary{decorations.markings}

\pgfplotsset{compat=1.16} 

\definecolor{matlabblue}{rgb}{0.0,0.45,0.74}

\newcommand{\dsum}{\displaystyle\sum}

\newcommand{\subsquare}{\scalebox{0.6}{$\blacksquare$}}
\newcommand{\subcircle}{\scalebox{1.2}{$\circ$}}

\newcommand{\bkt}[1]{\left(#1\right)}
\newcommand{\abs}[1]{\lvert#1\rvert}
\newcommand{\magn}[1]{\lVert#1\rVert}

\newcommand{\ceil}[1]{\left\lceil#1\right\rceil}

\newcommand{\Rb}{\mathbb{R}}

\newcommand{\mvp}{matrix-vector product}

\newcommand{\cmt}[1]{\iffalse {#1} \fi}

\makeatletter
\newcommand*{\addFileDependency}[1]{
  \typeout{(#1)}
  \@addtofilelist{#1}
  \IfFileExists{#1}{}{\typeout{No file #1.}}
}
\makeatother

\DeclareUnicodeCharacter{2212}{-}

\begin{document}

\maketitle

\begin{abstract}
Hierarchical matrices are data-sparse approximations of dense matrices that are widely used for fast matrix computations. Hierarchical matrices are built using a tree data structure, with low-rank blocks identified by various admissibility conditions, such as standard admissibility and weak admissibility. This paper introduces a novel hierarchical matrix framework, namely $\mathcal{H}_h$, based on a \emph{hybrid} admissibility condition: we use the standard admissibility at the coarser levels (larger blocks) and the weak admissibility at the finer levels (smaller blocks). This hybrid strategy confines dense blocks only along the diagonal. We provide a criterion that ensures lower storage cost for $\mathcal{H}_h$-matrices compared to $\mathcal{H}$-matrices under the standard admissibility condition. As special cases of $\mathcal{H}_h$-matrices, purely standard admissibility-based $\mathcal{H}$-matrices, as well as purely weak admissibility-based HODLR matrices, can be easily generated. We carry out a rounding error analysis of $\mathcal{H}_h$-matrices and show that the admissible blocks of $\mathcal{H}_h$-matrices can be represented in low precision (precision lower than the working precision) without degrading the overall approximation quality. We provide an explicit rule for dynamically selecting the precision of a given admissible block, thereby proposing an adaptive mixed precision algorithm for constructing and storing $\mathcal{H}_h$-matrices. Furthermore, we show that the use of mixed precision does not compromise the numerical stability and accuracy of the resulting $\mathcal{H}_h$-matrix-vector product. We perform a range of numerical experiments to validate our theoretical findings. Our numerical results show that the proposed adaptive mixed precision $\mathcal{H}_h$-matrices achieve significant storage reductions (up to $11 \times$ for $3$D kernel problems) compared with uniform double precision standard admissibility-based $\mathcal{H}$-matrices, without compromising accuracy. 
\end{abstract}
\begin{keywords}
Mixed precision, Adaptive precision, Hierarchical matrices, Rounding error analysis
\end{keywords}

\begin{AMS}
  65G50, 65F55, 65Y20, 68Q25, 68W25
\end{AMS}

\section{Introduction}
In many areas of computational science, including PDEs \cite{ho2013hierarchical,massei2022hierarchical}, machine learning \cite{gray2000,huang2006extreme}, graph theory \cite{kriege2020survey}, matrix operations such as matrix vector products, matrix factorizations, and solving linear systems are ubiquitous. However, as the matrix size $N$ associated with a problem increases, the corresponding storage and computational costs become increasingly prohibitive. Over the past few decades, substantial research effort \cite{greengard1987fast,hackbusch1999sparse,borm2003hierarchical,xia2010fast,gillman2012direct,amestoy2017complexity} has been devoted to developing techniques for accelerating matrix operations by exploiting rank-structuredness; among these, a popular framework is hierarchical matrices. Hierarchical matrices (in short, $\mathcal{H}$-matrices) are data-sparse approximations of the original matrix, introduced in \cite{hackbusch1999sparse,hackbusch2000sparse}. Hierarchical matrices can be categorized based on the admissibility condition (e.g., standard admissibility \cite{borm2003hierarchical} and weak admissibility \cite{weak_hackbusch2004}) and on how the bases of the admissible blocks are constructed (e.g., non-nested \cite{hackbusch2000sparse} and nested \cite{borm2009construction}). Hierarchical matrix formats such as $\mathcal{H}$-matrices \cite{hackbusch1999sparse,hackbusch2000sparse}, $\mathcal{H}^2$-matrices \cite{borm2009construction}, HODLR (Hierarchically Off-Diagonal Low-Rank) \cite{ambikasaran2013mathcal,ambikasaran2019hodlrlib}, HSS (Hierarchically Semi Separable) \cite{xia2010fast,chandrasekaran2007fast}, and HBS (Hierarchically Block Separable) are widely used to accelerate computations with matrices arising from boundary element methods (BEM) and kernel methods. In addition to fast solvers \cite{chandrasekaran2007fast,xia2010fast,ambikasaran2013mathcal}, hierarchical matrices are also employed in matrix factorization \cite{bebendorf2005hierarchical}, preconditioning \cite{grasedyck2009domain, bebendorf2005hierarchical}, RBF interpolation \cite{ambikasaran2013mathcal}, $N$-body problems \cite{khan2022numerical} and related applications. We refer the reader to \cite{hmatrix_book,bebendorf_book} and the references therein for further details on hierarchical matrices. In this work, we focus on $\mathcal{H}$-matrices constructed using a geometrically balanced $2^d$-tree \cite{grasedyck2003construction} (i.e., quad-tree in $2$D, oct-tree in $3$D, etc.), with the bases formed in a non-nested manner. 

Recently, there has been growing interest in mixed precision algorithms due to the increasing availability of low precision arithmetic on modern hardware. For example, vendors such as NVIDIA Tensor Core GPUs, Google TPUs, and AMD Instinct accelerators (MI300 series) provide hardware capable of performing floating-point computations in 16-bit (IEEE fp16 and bfloat16) formats. The use of low precision arithmetic can lead to significant performance improvements, including faster computations, reduced memory usage, and lower energy consumption, which are valuable for large-scale simulations, data-intensive applications, artificial intelligence and machine learning. However, careless use of low precision can potentially lead to loss of accuracy. To address this, mixed precision or adaptive precision strategies have been developed, where appropriate precisions are used in different parts of a computation while maintaining accuracy or stability. We direct the reader to the recent surveys \cite{higham2022mixed,abdelfattah2021survey} for a detailed overview of mixed precision algorithms in numerical linear algebra.

We would like to highlight a few related works that use mixed precision schemes for rank-structured matrices. In \cite{abdulah2021accelerating}, a mixed precision approach is used for Cholesky factorization in the context of geostatistical modeling, utilizing fp64, fp32, and fp16 floating-point formats. \cite{ooi2020effect} investigates the impact of mixed precision computation on $\mathcal{H}$-matrix–vector products, where fp64 and fp32 arithmetic are used, and the resulting matrix-vector products are applied to accelerate iterative solvers. These studies are primarily application-driven and performance-oriented. The work \cite{amestoy2023mixed} demonstrates that singular vectors associated with small singular values can be represented in lower precision without sacrificing accuracy and presents a mixed precision single-level BLR matrix and its LU factorization. \cite{kriemann2024performance} studies the advantages of $\mathcal{H}$-matrix-vector products with floating point compression. An accumulator-based approach to $\mathcal{H}$-matrix arithmetic is presented in \cite{doi:10.1137/24M1649009}. The paper \cite{carson2025mixed} presents a rounding error analysis of a special class of $\mathcal{H}$-matrices, HODLR matrices, and proposes mixed precision HODLR matrices. It also analyzes the stability of the matrix–vector products and LU decompositions of the resulting mixed precision HODLR matrix representation. However, a rounding error analysis for more general $\mathcal{H}$-matrices and mixed precision $\mathcal{H}$-matrices remains absent. 

In this work, we aim to bridge this gap by proposing a more efficient adaptive mixed precision $\mathcal{H}$-matrix representation. This work differs from \cite{carson2025mixed} in many aspects. While \cite{carson2025mixed} focuses on binary-tree-based HODLR matrices, which primarily rely on an algebraic block splitting of the original matrix, we consider $2^d$-tree-based hierarchical structures that are more complex and more general. In addition, we introduce a novel class of hybrid hierarchical matrices, of which standard admissibility-based $\mathcal{H}$-matrices as well as HODLR matrices can be obtained as special cases. Moreover, HODLR matrices can become inefficient in higher dimensions $(d>1)$, as the numerical rank grows as a positive power of the matrix size $N$ \cite{khan2022numerical}, and therefore it may not be a truly \emph{quasi-linear} complexity algorithm. By exploiting the geometric information of the clusters, our hierarchical representations provide better control over the numerical rank of the admissible blocks, which is important for efficiently representing them in lower precision. Thus, this work could be attractive for large-scale multidimensional problems.

The main contributions of this paper are as follows.
\begin{enumerate}
    \item We propose a novel hybrid admissibility condition, which exploits the standard admissibility condition at coarser levels and the weak admissibility condition at finer levels. Based on this idea, we present a new class of hierarchical matrices, referred to as $\mathcal{H}_h$-matrices. The standard admissibility-based $\mathcal{H}$-matrices and the weak admissibility-based HODLR matrices can be obtained as special cases of $\mathcal{H}_h$-matrices. We demonstrate the potential storage advantages of $\mathcal{H}_h$-matrices.
    \item We conduct a rounding error analysis of $\mathcal{H}_h$-matrix approximation, which provides an explicit rule for selecting the appropriate precision for each admissible block without compromising accuracy. This motivates us to propose an adaptive mixed precision $\mathcal{H}_h$-matrix representation.
    \item Using the adaptive mixed precision $\mathcal{H}_h$-matrix representation, we analyze the error in computing matrix–vector products and present a corresponding backward error bound.
    \item We perform a series of experiments with dense kernel matrices arising from BEM and kernel methods. The numerical results confirm our theoretical findings and demonstrate that adaptive mixed precision $\mathcal{H}_h$-matrices can significantly reduce storage requirements compared to uniform precision schemes while maintaining the same level of accuracy. Our implementation is made publicly available at \url{https://github.com/riteshkhan/ampHmat}.
\end{enumerate}

The rest of this paper is organized as follows. \Cref{sec:prelim} reviews hierarchical matrices under various admissibility conditions and presents a comparative analysis. In \Cref{sec:hybrid_hmat}, we introduce hybrid hierarchical matrices and discuss the benefits of this approach. In \Cref{sec:adaptive_prec_Hmatrix}, we discuss the proposed adaptive mixed precision hybrid hierarchical matrices and the matrix–vector product using the resulting representations. Numerical experiments validating the theoretical results and showcasing the advantages of the adaptive mixed precision hybrid hierarchical matrices are presented in \Cref{sec:num_results}, followed by a conclusion in \Cref{sec:conclusion}.

Throughout this paper, we adopt the following notation. Unless stated otherwise, the unsubscripted norm $\magn{\cdot}$ refers to the Frobenius norm. We use the standard model of floating-point arithmetic \cite{higham2002accuracy}. The notation $\widehat{X}$ denotes $X$ computed in finite precision. We use the notation $\text{fl} \bkt{\cdot}$ to represent the computed value of the corresponding expression. The notation $\lesssim$ is used in the rounding error analysis when safely dropping the negligible terms of second and higher orders. We refer to precision with unit roundoff $u$ as ``precision $u$'', as is standard in the literature. A uniform precision scheme is one in which only one precision is used throughout the computation. \Cref{tab:precisions} lists the parameters corresponding to the various floating-point formats used in this work. Unless stated otherwise, we consider fp64 (double) as the working precision.

\begin{table}[H]
\resizebox{\textwidth}{!}{%
\centering
\begin{tabular}{llllllll}
 \quad fp format               & \text{bits} & $\bkt{1-e-m}$ & \qquad $u$ & \quad $x_{\min}$ & \quad $x_{\max}$ & $e_{\min}$ & $e_{\max}$ \\ \hline \hline
double (fp64)   &    64 & $\bkt{1-11-52}$  & $1.11\times 10^{-16}$  & $2.23 \times 10^{-308}$   &  $1.80\times 10^{308}$       &  -1022      &  1023      \\ \hline
single (fp32)   &    32 & $\bkt{1-8-23}$  & $5.96 \times  10^{-8}$  & $1.18 \times 10^{-38}$   &  $3.40\times 10^{38}$        &   -126     &    127    \\ \hline
half (fp16)     &    16 &  $\bkt{1-5-10}$    & $4.88 \times  10^{-4}$   & $6.10 \times 10^{-5}$        &  $6.55 \times 10^{4}$      & -14  &   15     \\ \hline
bfloat16 (bf16) &    16 & $\bkt{1-8-7}$      & $3.91 \times  10^{-3}$  &   $1.18 \times 10^{-38}$     &  $3.39 \times 10^{38}$      & -126  &  127           \\ \hline
quarter (q43) &  8  & $\bkt{1-4-3}$       &  $2.5 \times 10^{-1}$  &  $1.56 \times 10^{-2}$      &  $2.4 \times 10^{2}$      & -6  &   7       \\ \hline
\end{tabular}}
\caption{Parameters for various floating-point formats used in this work. The symbols $e$ and $m$ represent the number of bits for exponent and mantissa, respectively. The symbol $u$ refers to the unit roundoff. The symbols $x_{\min}$ and $x_{\max}$ correspond to the smallest and largest representable positive floating-point numbers, with their exponents indicated by $e_{\min}$ and $e_{\max}$, respectively.}
\label{tab:precisions}
\end{table}

\section{Hierarchical matrices} \label{sec:prelim}
Hierarchical matrices ($\mathcal{H}$-matrices) are multilevel block low-rank representations of dense matrices. These representations are constructed using a tree data structure, and the low-rank blocks are chosen according to an admissibility condition. In this section, we review hierarchical matrices based on various admissibility conditions and compare their storage costs. 

\subsection{Tree construction} Let $\Omega \subset \mathbb{R}^d$ denote a domain containing $N$ particles (or points), whose interactions are represented by a matrix $H \in \mathbb{R}^{N \times N}$. Typically, $H$ is a dense matrix. We further assume that $\Omega$ is contained within a $d$-dimensional hypercube $C$. $\mathcal{H}$-matrices can be constructed using various tree structures, such as block cluster tree \cite{hmatrix_book}, balanced \cite{grasedyck2003construction, hmatrix_book}, $k$-d tree \cite{ambikasaran2019hodlrlib}, and others \cite{massei2022hierarchical}. However, to simplify the analysis and exposition, we consider a (geometrically) balanced $2^d$-tree \cite{grasedyck2003construction} (i.e., quad-tree in $2$D, oct-tree in $3$D, etc.) throughout this paper. 

Let $\mathcal{T}^L$ represent a balanced $2^d$-tree of depth $L$. At level $0$ (root level) of the tree is the hypercube $C$ itself. Each hypercube $C_i^{\bkt{l}}$ at level $l < L$ is partitioned into $2^d$ finer hypercubes $C_{\alpha}^{\bkt{l+1}}$, where $\alpha = 1,2,\dots, 2^d$, which belong to level $l+1$ of the tree. The former is the parent of the latter, and the latter are its children. The cluster $\mathcal{C}_i^{\bkt{l}}$ is defined as the collection of the indices corresponding to the particles in the hypercube $C_i^{\bkt{l}}$. The partitioning continues recursively up to level $L$ (the leaf level), where each leaf hypercube contains at most $n_{\max}$ particles. It can be noted that for a \emph{perfectly} balanced $2^d$-tree, $N = n_{\max} 2^{dL} \implies L = \ceil{\log_{2^d}\bkt{N/{n_{\max}}}}$.

\subsection{Various admissibility conditions}
The low-rank block matrices of an $\mathcal{H}$-matrix are identified based on geometrical information known as admissibility conditions. Let $\mathcal{C}_i$ and $\mathcal{C}_j$ be two clusters at the same level of a balanced $2^d$-tree. For notational convenience, the superscript $``l"$ is occasionally omitted when both clusters are at the same level, and their index sets are denoted by $I$ and $J$, respectively. If the admissibility condition is satisfied, the block matrix of $H$ corresponding to $I$ and $J$, $H_{I,J} \in \Rb^{\abs{I} \times \abs{J}}$, is approximated by a low-rank matrix with a target accuracy $\epsilon$, i.e.,

\begin{align}
    \magn{H_{I,J} - U_I \bkt{V_J}^*} \leq \epsilon \magn{H_{I,J}}.
\end{align}

In this work, the truncated SVD is employed as the low-rank approximation technique in the numerical experiments.

Two commonly used admissibility conditions in the literature are the standard (or strong) admissibility condition \cite{borm2003hierarchical} and the weak admissibility condition \cite{weak_hackbusch2004}. 

The $\mathcal{H}$-matrix formats based on standard and weak admissibility conditions are referred to as $\mathcal{H}_s$-matrices (the subscript $``s"$ indicates the use of standard admissibility condition) and HODLR matrices, respectively.

\subsection{\texorpdfstring{$\mathcal{H}_s$}{Hs}-matrices}
$\mathcal{H}_s$-matrices are based on standard (or strong) admissibility conditions \cite{borm2003hierarchical}, where the admissible clusters are well-separated or far-field (at least one box away). 
Let $\mathcal{C}_i$ and $\mathcal{C}_j$ be two distinct clusters at the same level of a balanced $2^d$-tree, with index sets $I$ and $J$, respectively. The block $I \times J$, or the pair of clusters $\mathcal{C}_i$ and $\mathcal{C}_j$, is said to be admissible under the standard admissibility condition, denoted by $\text{adm}_s \bkt{\mathcal{C}_i, \mathcal{C}_j} = \text{true}$, if the following condition holds:
\begin{align} \label{eq:H_admiss}
    \text{adm}_s \bkt{\mathcal{C}_i, \mathcal{C}_j} = \text{true} \iff \min \bkt{\text{diam} \bkt{C_i}, \text{diam} \bkt{C_j}} \leq \eta \text{ dist} \bkt{C_i,C_j},
\end{align}
where $$\text{diam}(X)= \sup\{\|u-v\|_{2}:u,v\in X\}$$ and $$\text{dist}(X,Y) = \inf\{\|x-y\|_{2}:x\in X, y\in Y\}.$$

The admissibility parameter $\eta>0$ controls the separation distance and the number of admissible pairs. From \cref{eq:H_admiss}, it follows that the smaller the $\eta$, the further apart the admissible clusters; conversely, the larger the $\eta$, the closer the admissible clusters.

If the admissibility condition is not satisfied, the corresponding cluster pair is inadmissible. The self-cluster and the neighboring clusters are inadmissible clusters.

The interaction list and the neighbor list of a cluster $\mathcal{C}_i$ under the standard admissibility condition are defined as follows:

The interaction list of $\mathcal{C}_i$ is defined as
\begin{align*} \label{eq:IL_strong}
    \mathcal{IL}_{s} \bkt{\mathcal{C}_i} := \big \{ \mathcal{C}_j : \text{adm}_s \bkt{\mathcal{C}_i, \mathcal{C}_j} = \text{true} \text{ and } \text{adm}_s \bkt{\text{parent} \bkt{\mathcal{C}_i}, \text{parent} \bkt{\mathcal{C}_j}} = \text{false} \big \}.
\end{align*}

The neighbors list of $\mathcal{C}_i$ is given by
\begin{align*} \label{eq:N_strong}
    \mathcal{N}_{s} \bkt{\mathcal{C}_i} := \big \{ \mathcal{C}_j : \text{adm}_s \bkt{\mathcal{C}_i, \mathcal{C}_j} = \text{false} \text{ and } j \neq i \big \}.
\end{align*}

\Cref{fig:h_interaction} shows the neighbors and interaction lists of a cluster (self-cluster) at different levels of the tree in $2$D, when $\eta = \sqrt{2}$ in \cref{eq:H_admiss}. The corresponding $\mathcal{H}_s$-matrix at different levels is illustrated in \Cref{fig:hmat_2d_figs}.

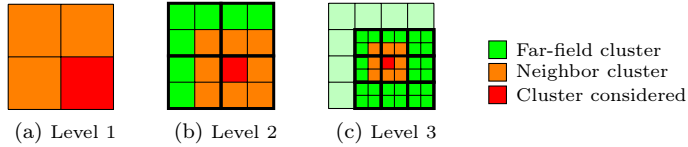
\begin{figure}[H]
\begin{center}
\subfloat[\scriptsize Level $1$]{
    \begin{tikzpicture}[scale=0.7]
        \fill [red] (1,0) rectangle (2,1);
        \fill [orange] (0,0) rectangle (1,1);
        \fill [orange] (1,1) rectangle (2,2);
        \fill [orange] (0,1) rectangle (1,2);
        \draw[step=1cm, black] (0, 0) grid (2, 2);
    \end{tikzpicture}
    \label{h_interaction_1}
    }\quad 
    \subfloat[\scriptsize Level $2$]{
    \begin{tikzpicture}[scale=0.35]
        \draw[black] (0, 0) grid (4, 4);
        \fill [green] (0,0) rectangle (1,1);
        \fill [green] (0,1) rectangle (1,2);
        \fill [green] (0,2) rectangle (1,3);
        \fill [green] (0,3) rectangle (1,4);
        \fill [green] (1,3) rectangle (2,4);
        \fill [green] (2,3) rectangle (3,4);
        \fill [green] (3,3) rectangle (4,4);
        \fill [red] (2,1) rectangle (3,2);
        \fill [orange] (1,0) rectangle (2,1);
        \fill [orange] (1,1) rectangle (2,2);
        \fill [orange] (1,2) rectangle (2,3);
        \fill [orange] (2,2) rectangle (3,3);
        \fill [orange] (3,2) rectangle (4,3);
        \fill [orange] (2,0) rectangle (3,1);
        \fill [orange] (3,0) rectangle (4,1);
        \fill [orange] (3,1) rectangle (4,2);
        \node[anchor=north] at (1.5,2.8) {};
        \node[anchor=north] at (1.5,1.8) {};
        \node[anchor=north] at (.5,1.8) {};
        \draw[black] (0, 0) grid (4, 4);
        \draw[line width=0.4mm,  black] (0, 0) rectangle (2, 2);
        \draw[line width=0.4mm,  black] (2, 2) rectangle (4, 4);
        \draw[line width=0.4mm,  black] (2, 0) rectangle (4, 2);
        \draw[line width=0.4mm,  black] (0, 2) rectangle (2, 4);
    \end{tikzpicture}
    \label{h_interaction_2}
    }\quad 
    \subfloat[\scriptsize Level $3$]{
    \begin{tikzpicture}[scale=0.35]
        \fill [green!25] (0,0) rectangle (1,1);
        \fill [green!25] (0,1) rectangle (1,2);
        \fill [green!25] (0,2) rectangle (1,3);
        \fill [green!25] (0,3) rectangle (1,4);
        \fill [green!25] (1,3) rectangle (2,4);
        \fill [green!25] (2,3) rectangle (3,4);
        \fill [green!25] (3,3) rectangle (4,4);
        \node[anchor=north] at (1.5,2.8) {};
        \node[anchor=north] at (1.5,1.8) {};
        \node[anchor=north] at (.5,1.8) {};
        \draw[black] (0, 0) grid (4, 4);
        \draw[line width=0.4mm,  fill=green] (1, 0) rectangle (2, 1);
        \draw[line width=0.4mm,  fill=green] (1, 1) rectangle (2, 2);
        \draw[line width=0.4mm,  fill=green] (2, 2) rectangle (3, 3);
        \draw[line width=0.4mm,  fill=green] (3, 2) rectangle (4, 3);
        \draw[line width=0.4mm,  fill=green] (2, 0) rectangle (3, 1);
        \draw[line width=0.4mm,  fill=green] (3, 0) rectangle (4, 1);
        \draw[line width=0.4mm,  fill=green] (3, 1) rectangle (4, 2);
        \draw[line width=0.4mm,  fill=green] (1, 2) rectangle (2, 3);
        \fill [orange] (1.5,1) rectangle (3,2.5);
        \fill [red] (2,1.5) rectangle (2.5,2);
        \draw[step=0.5cm, black] (1, 0) grid (4, 3);
        \draw[line width=0.4mm] (1, 0) rectangle (2, 1);
        \draw[line width=0.4mm] (1, 1) rectangle (2, 2);
        \draw[line width=0.4mm] (2, 2) rectangle (3, 3);
        \draw[line width=0.4mm] (3, 2) rectangle (4, 3);
        \draw[line width=0.4mm] (2, 0) rectangle (3, 1);
        \draw[line width=0.4mm] (3, 0) rectangle (4, 1);
        \draw[line width=0.4mm] (3, 1) rectangle (4, 2);
        \draw[line width=0.4mm] (1, 2) rectangle (2, 3);
    \end{tikzpicture}
    \label{h_interaction_3}
    }\quad
    \subfloat{
        \begin{tikzpicture}
            [
            box/.style={rectangle,draw=black, minimum size=0.1cm},scale=0.1
            ]
            \node[box,fill=red,,font=\tiny,label=right: \scriptsize Cluster considered,  anchor=west] at (-8,8){};
            \node[box,fill=orange,,font=\tiny,label=right:\scriptsize Neighbor cluster,  anchor=west] at (-8,11){};
            \node[box,fill=green,,font=\tiny,label=right:\scriptsize Far-field cluster,  anchor=west] at (-8,14){};
        \end{tikzpicture}
    }
    \caption{Interaction and neighbors lists under the standard admissibility condition $(\eta = \sqrt{2})$ at different levels in $2$D. Lighter shade colors denote the interaction list at the coarser levels.}
    \label{fig:h_interaction}
\end{center}
\end{figure}

    \begin{figure}[H]
        \centering
        \subfloat[\scriptsize  At level $1$.]{
        \includegraphics[scale=.2]{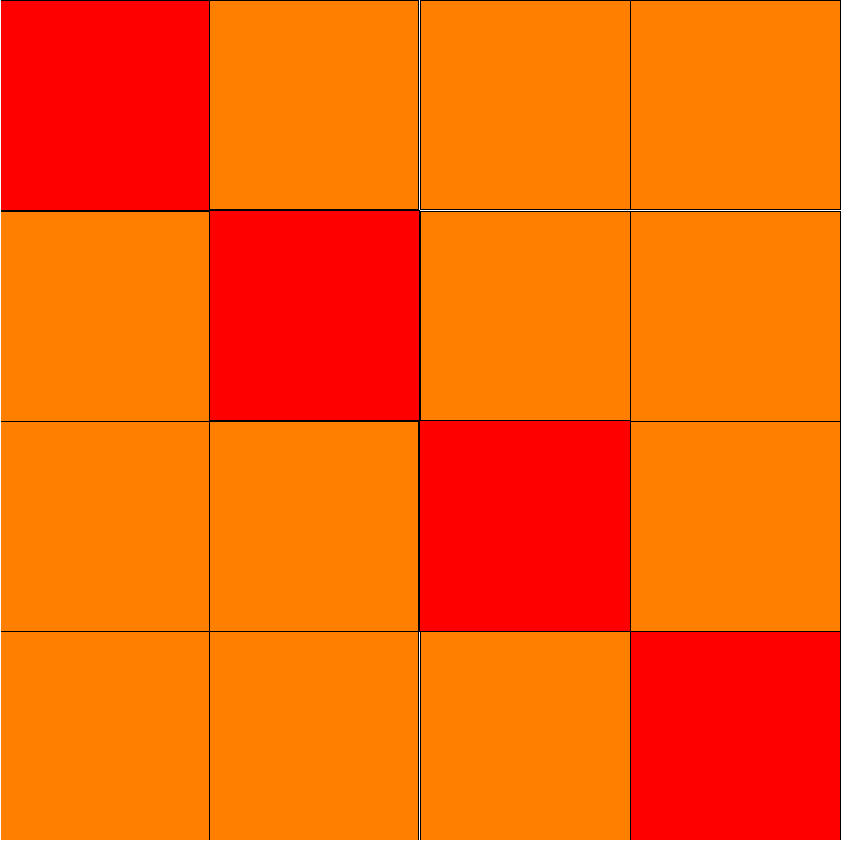}}
        \subfloat[\scriptsize  At level $2$.]{
        \includegraphics[scale=.205]{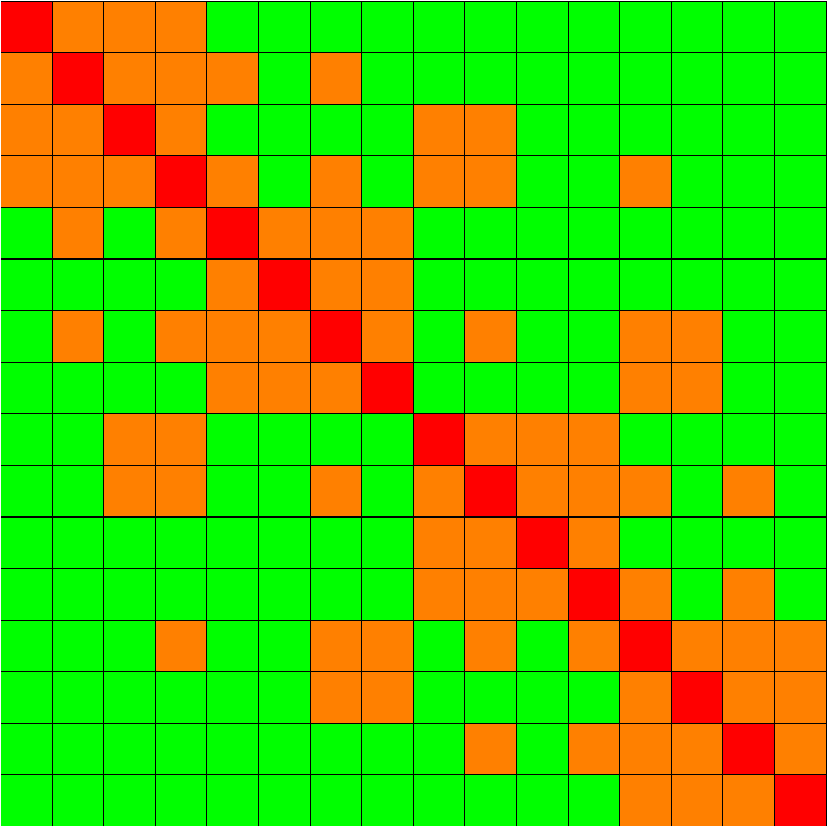}}
        \subfloat[\scriptsize  At level $3$.]{
        \includegraphics[scale=.2]{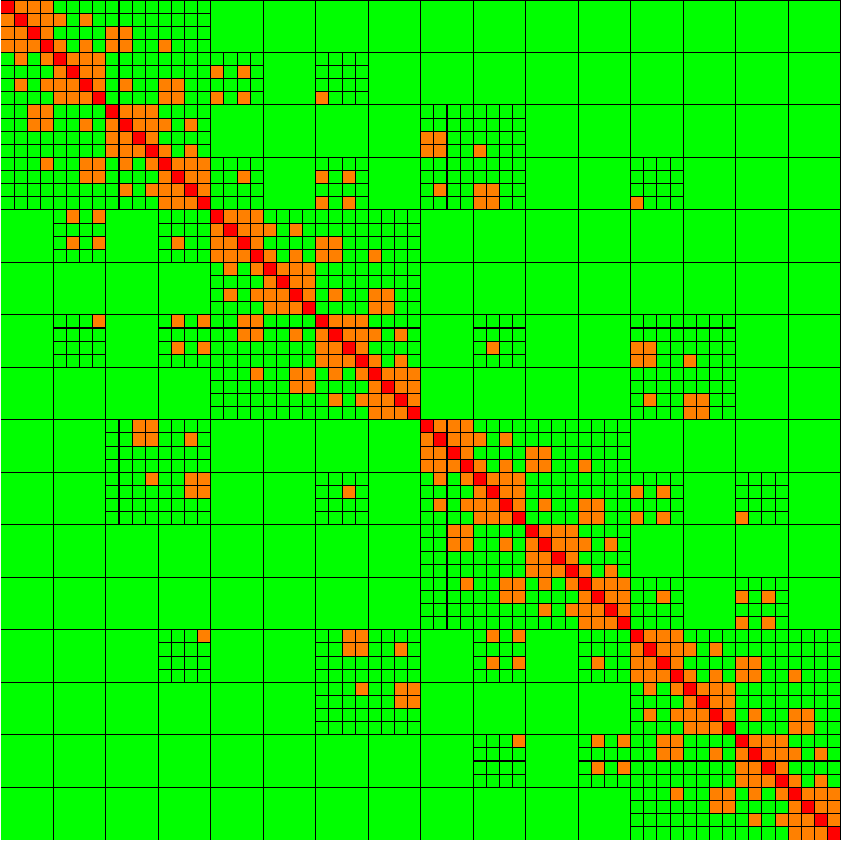}} \quad
        \subfloat{
        \begin{tikzpicture}
            [
            box/.style={rectangle,draw=black, minimum size=0.2cm},scale=0.1
            ]
            \node[box,fill=red,,font=\tiny,label=right:\scriptsize Diagonal (dense) block,  anchor=west] at (8,0){};
            \node[box,fill=orange,,font=\tiny,label=right:\scriptsize Neighbor (dense) block,  anchor=west] at (8,5){};
            \node[box,fill=green,,font=\tiny,    label={right:\scriptsize \makecell[l]{Far-field\\(compressed) block}},  anchor=west] at (8,10){};
        \end{tikzpicture}
        }
        \caption{$\mathcal{H}_{s}$-matrix with $\eta = \sqrt{2}$ at different levels in $2$D (no compression at level $1$).}
        \label{fig:hmat_2d_figs}
    \end{figure}

One can obtain an upper bound on the sizes of the interaction list and the neighbor list of a cluster in a balanced $2^d$-tree \cite{weak_hackbusch2004}.
The maximum sizes of the interaction list and the neighbor list of a cluster at \emph{any level} in $d$ dimensions are, respectively,
\begin{align} \label{eq:standard_IL_size}
   \abs{\mathcal{IL}_{s} \bkt{\mathcal{C}_i}} \leq \bkt{2^d-1} \bkt{1 + {2\sqrt{d}}/{\eta}}^d = C^{\prime},
\end{align}
\begin{align} \label{eq:standard_N_size}
    \abs{\mathcal{N}_{s} \bkt{\mathcal{C}_i}} \leq \bkt{1 + {2\sqrt{d}}/{\eta}}^d - 1 = C^{\prime\prime}.
\end{align}

It is worth noting that $\eta = \sqrt{d}$ generates a structure identical to the balanced FMM's interaction list \cite{greengard1987fast}. In particular, for the two-dimensional case with $\eta=\sqrt{2}$ (\Cref{fig:h_interaction}), we obtain $\abs{\mathcal{IL}_{s} \bkt{\mathcal{C}_i}} \leq 27 \text{ and } \abs{\mathcal{N}_{s} \bkt{\mathcal{C}_i}} \leq 8.$

\subsection{HODLR matrices}
HODLR matrices are a class of $\mathcal{H}$-matrices based on weak admissibility conditions \cite{weak_hackbusch2004}. The notion of weak admissibility was introduced in the context of one-dimensional problems, where the block matrices corresponding to adjacent intervals are compressed. More precisely, under this notion, all the off-diagonal block matrices are approximated by low-rank matrices, while the leaf-level diagonal blocks are only full-rank blocks. In the literature, HODLR matrices are typically constructed using a binary tree \cite{massei2020hm,ambikasaran2013mathcal}, regardless of the dimension of the underlying domain. 
However, since a balanced $2^{d}$-tree is employed in this work, the HODLR matrices considered herein are constructed using the same $2^{d}$-tree structure for consistency. 
One advantage of using a $2^d$-tree over a binary tree is that it yields smaller size admissible blocks, resulting in lower numerical ranks in higher dimensions.

Under the weak admissibility condition, all the non-self or adjacent clusters are admissible. For two clusters $\mathcal{C}_i$ and $\mathcal{C}_j$ at the same level of the $2^d$-tree, the weak admissibility condition is given by

\begin{align} \label{eq:HODLR_admiss}
    \text{adm}_w \bkt{\mathcal{C}_i, \mathcal{C}_j} = \text{true} \iff i \neq j.
\end{align}

The interaction list of $\mathcal{C}_i$ under the weak admissibility condition is defined as
\begin{align*}
    \mathcal{IL}_{w} \bkt{\mathcal{C}_i} := \big \{ \mathcal{C}_j : \text{adm}_w \bkt{\mathcal{C}_i, \mathcal{C}_j} = \text{true} \big \} =  \big \{ \mathcal{C}_j :  j \neq i \big \}.
\end{align*}

\begin{figure}[H]
\begin{center}
\subfloat[\scriptsize Level $1$]{
    \begin{tikzpicture}[scale=0.7]
        \fill [red] (1,0) rectangle (2,1);
        \fill [matlabblue] (0,0) rectangle (1,1);
        \fill [matlabblue] (1,1) rectangle (2,2);
        \fill [matlabblue] (0,1) rectangle (1,2);
        \draw[step=1cm, black] (0, 0) grid (2, 2);
        \draw[line width=0.4mm,  black] (0, 0) rectangle (2, 2);
    \end{tikzpicture}
    \label{hodlr_interaction_1}
    }\quad 
    \subfloat[\scriptsize Level $2$]{
    \begin{tikzpicture}[scale=0.35]
        \draw[black] (0, 0) rectangle (4, 4);
        \fill [matlabblue!20] (0,0) rectangle (4,4);
        \fill [red] (2,1) rectangle (3,2);
        \fill [matlabblue] (2,0) rectangle (3,1);
        \fill [matlabblue] (3,0) rectangle (4,1);
        \fill [matlabblue] (3,1) rectangle (4,2);
        \draw[black] (2, 0) grid (4, 2);
        \node[anchor=north] at (1.5,2.8) {};
        \node[anchor=north] at (1.5,1.8) {};
        \node[anchor=north] at (.5,1.8) {};
        \draw[line width=0.4mm,  black] (2, 0) rectangle (4, 2);
    \end{tikzpicture}
    \label{hodlr_interaction_2}
    }\quad 
    \subfloat[\scriptsize Level $3$]{
    \begin{tikzpicture}[scale=0.35]
        \draw[black] (0, 0) rectangle (4, 4);
        \fill [matlabblue!20] (0,0) rectangle (4,4);
        \fill [matlabblue] (2,1) rectangle (3,2);
        \fill [red] (2,1.5) rectangle (2.5,2); 
        \draw[step=0.5cm, black] (2, 1) grid (3, 2);
        \node[anchor=north] at (1.5,2.8) {};
        \node[anchor=north] at (1.5,1.8) {};
        \node[anchor=north] at (.5,1.8) {};
        \draw[line width=0.4mm,  black] (2, 1) rectangle (3, 2);
    \end{tikzpicture}
    \label{hodlr_interaction_3}
    }\quad
    \subfloat{
        \begin{tikzpicture}
            [
            box/.style={rectangle,draw=black, minimum size=0.1cm},scale=0.1
            ]
            \node[box,fill=red,,font=\tiny,label=right: \scriptsize Cluster considered,  anchor=west] at (-8,8){};
            \node[box,fill=matlabblue,,font=\tiny,label=right:\scriptsize Adjacent cluster,  anchor=west] at (-8,11){};
        \end{tikzpicture}
    }
    \caption{Interaction list under the weak admissibility condition at different levels in $2$D (quad-tree). Lighter shade colors denote the interaction list at the coarser levels.}
    \label{fig:hodlr_interaction}
\end{center}
\end{figure}
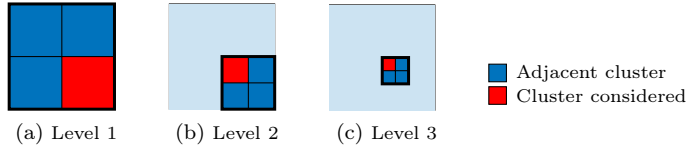

    \begin{figure}[H]
        \centering
        \subfloat[\scriptsize At level $1$.]{
        \includegraphics[scale=.2]{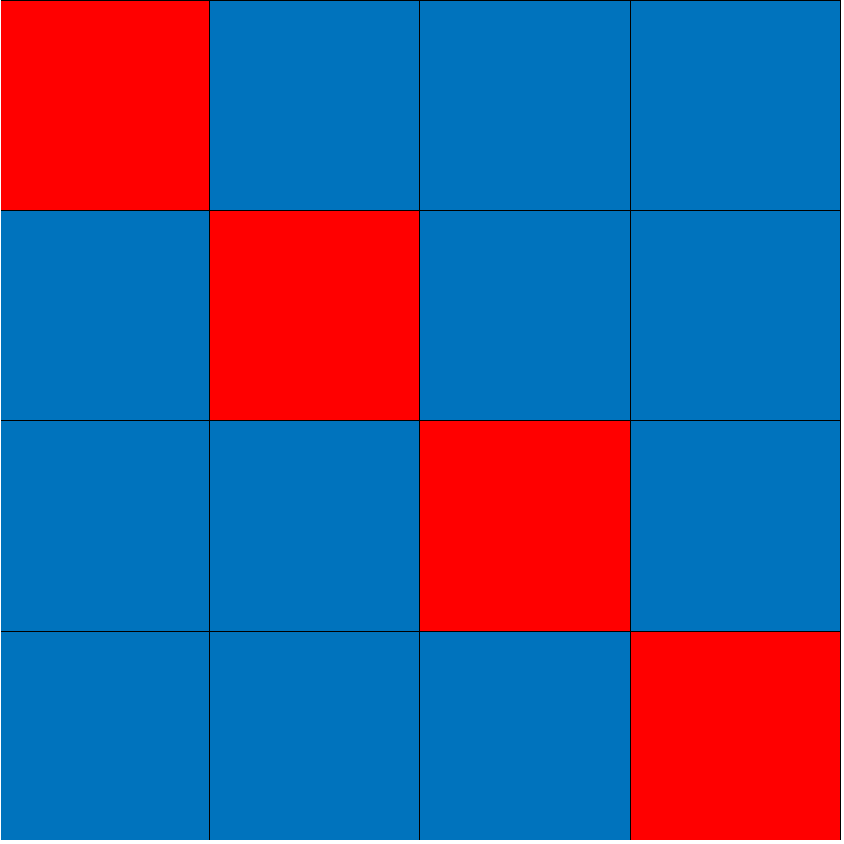}}
        \subfloat[\scriptsize  At level $2$.]{
        \includegraphics[scale=.205]{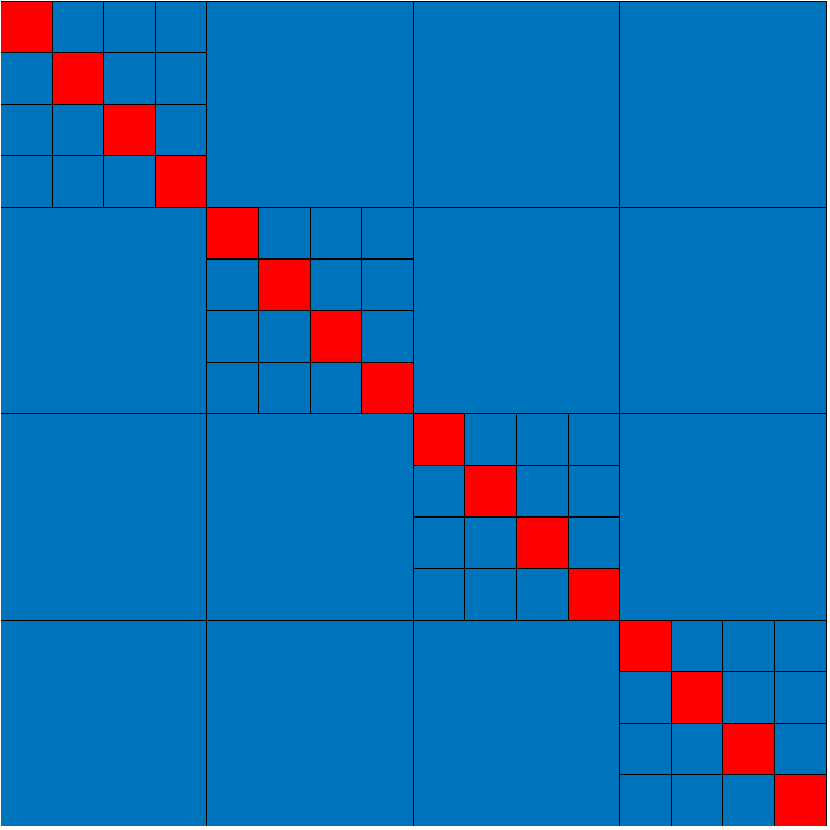}}
        \subfloat[\scriptsize  At level $3$.]{
        \includegraphics[scale=.2]{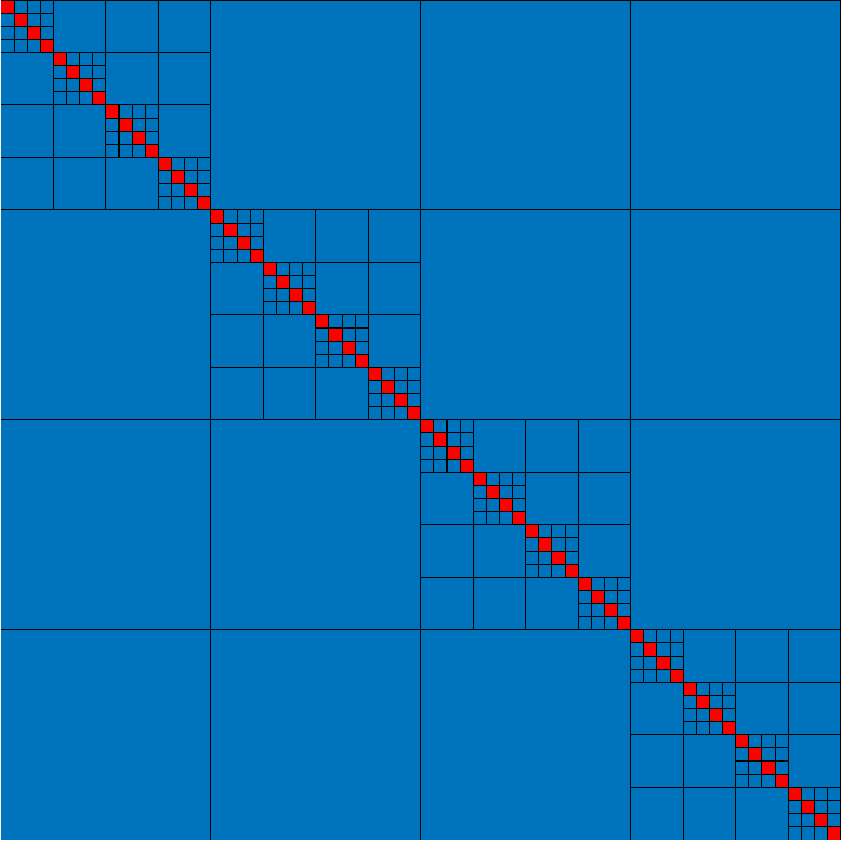}} \quad
        \subfloat{
        \begin{tikzpicture}
            [
            box/.style={rectangle,draw=black, minimum size=0.2cm},scale=0.1
            ]
            \node[box,fill=red,,font=\tiny,label=right:\scriptsize Diagonal (dense) block,  anchor=west] at (8,0){};
            \node[box,fill=matlabblue,,font=\tiny,label={right:\scriptsize \makecell[l]{Adjacent\\(compressed) block}},  anchor=west] at (8,5){};
        \end{tikzpicture}
        }
        \caption{HODLR matrix at different levels in $2$D.}
        \label{fig:hodlr_2d_figs}
    \end{figure}

\cref{fig:hodlr_interaction} and \cref{fig:hodlr_2d_figs} illustrate the interaction list of a cluster and the corresponding HODLR matrix representation, respectively.

The size of the interaction list of a cluster at any level in $d$ dimensions is

\begin{align} \label{eq:weak_IL_size}
   \abs{\mathcal{IL}_{w} \bkt{\mathcal{C}_i}} = \bkt{2^d-1} = C^{\prime\prime\prime}.
\end{align}

\subsection{Storage comparison of \texorpdfstring{$\mathcal{H}_s$}{Hs} and HODLR matrices} \label{subsec:H_HODLR}
Let $\mathcal{C}_i^{\bkt{l}}$ and $\mathcal{C}_j^{\bkt{l}}$ be admissible clusters at level $l$, and let $p_{ij}^{\bkt{l}}$ represent the numerical rank of the admissible block matrix for a target accuracy $\epsilon$. The admissible block is stored in low-rank format, and its storage requirement is given by 

\begin{align} \label{eq:LR_storage}
    p_{ij}^{\bkt{l}} (\abs{\mathcal{C}_i^{\bkt{l}}} + \abs{\mathcal{C}_j^{\bkt{l}}}).
\end{align}
The non-admissible blocks are stored in dense matrix format. The total storage cost is the sum of low-rank storage and dense storage requirements.
The storage cost of $\mathcal{H}_s$-matrix representation is thus given by
 \begin{equation} \label{eq:hs_storage}
 \resizebox{11.9cm}{!}{$
\begin{aligned}
  \mathcal{S}_{\mathcal{H}_s} = \overbrace{\dsum_{l=1}^{L} \dsum_{i=1}^{2^{dl}} \dsum_{\mathcal{C}_j^{\bkt{l}} \in \mathcal{IL}_s (\mathcal{C}_i^{\bkt{l}})} {p}_{ij}^{\prime \bkt{l}} (\abs{\mathcal{C}_i^{\bkt{l}}} + \abs{\mathcal{C}_j^{\bkt{l}}})}^{\text{\scriptsize Low-rank storage}} + \overbrace{\dsum_{i=1}^{2^{d L}} \dsum_{\mathcal{C}_j^{\bkt{L}} \in \mathcal{N}_s (\mathcal{C}_i^{\bkt{L}})} \abs{\mathcal{C}_i^{\bkt{L}}} \abs{\mathcal{C}_j^{\bkt{L}}}  + \dsum_{i=1}^{2^{d L}}  \abs{\mathcal{C}_i^{\bkt{L}}} \abs{\mathcal{C}_i^{\bkt{L}}}}^{\text{\scriptsize Dense storage}},  
\end{aligned}
$}
\end{equation}
and the storage cost of the HODLR matrix representation is given by
\begin{align} \label{eq:hodlr_storage}
  \mathcal{S}_{HODLR} = \overbrace{\dsum_{l=1}^{L} \dsum_{i=1}^{2^{dl}} \dsum_{\mathcal{C}_j^{\bkt{l}} \in \mathcal{IL}_w (\mathcal{C}_i^{\bkt{l}})} {p}_{ij}^{\prime\prime\prime\bkt{l}} (\abs{\mathcal{C}_i^{\bkt{l}}} +\abs{\mathcal{C}_j^{\bkt{l}}})}^{\text{\scriptsize Low-rank storage}} +  \overbrace{\dsum_{i=1}^{2^{d L}}  \abs{\mathcal{C}_i^{\bkt{L}}} \abs{\mathcal{C}_i^{\bkt{L}}}}^{\text{\scriptsize Dense storage}}.
\end{align}

The storage cost of the $\mathcal{H}$-matrix representation of $H \in \mathbb{R}^{N \times N}$ is $\mathcal{O} \bkt{p N \log \bkt{N}}$, where $p$ is the maximum numerical rank of the admissible blocks. For $\mathcal{H}_s$-matrices, $p$ scales roughly as $\mathcal{O} \bkt{\log^d \bkt{1/\epsilon}}$, since the admissible clusters are the far-field (well-separated) clusters. In contrast, for HODLR matrices, $p$ scales roughly as $\mathcal{O} \bkt{N^{(d-1)/d} \log \bkt{N/\epsilon}}$ \cite{khan2022numerical}. Hence, HODLR matrix does not achieve truly \emph{quasi-linear} complexity and is computationally less efficient than $\mathcal{H}_s$-matrix for large $N$ in higher dimensions $(d>1)$.

As a motivating example, we perform the following numerical experiment in two dimensions to compare the storage requirements of $\mathcal{H}_s$ and HODLR matrices for different target accuracies $\epsilon$.

    \begin{figure}[H]
        \centering
        \subfloat[\scriptsize  $N = 262144$ and $n_{\max} = 256$ $(L=5)$.]{
        \includegraphics[scale=.34]{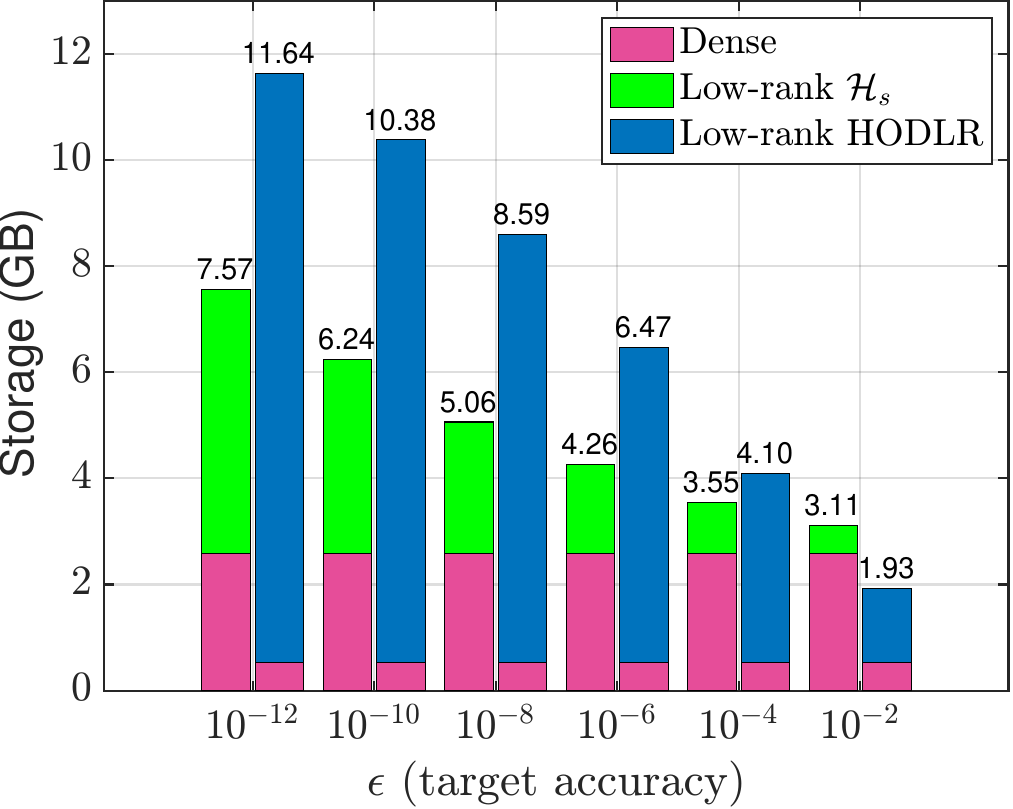}\label{fig:HODLR_H_comp_mem2}}
        \subfloat[\scriptsize  $N = 262144$ and $n_{\max} = 64$ $(L=6)$.]{
        \includegraphics[scale=.34]{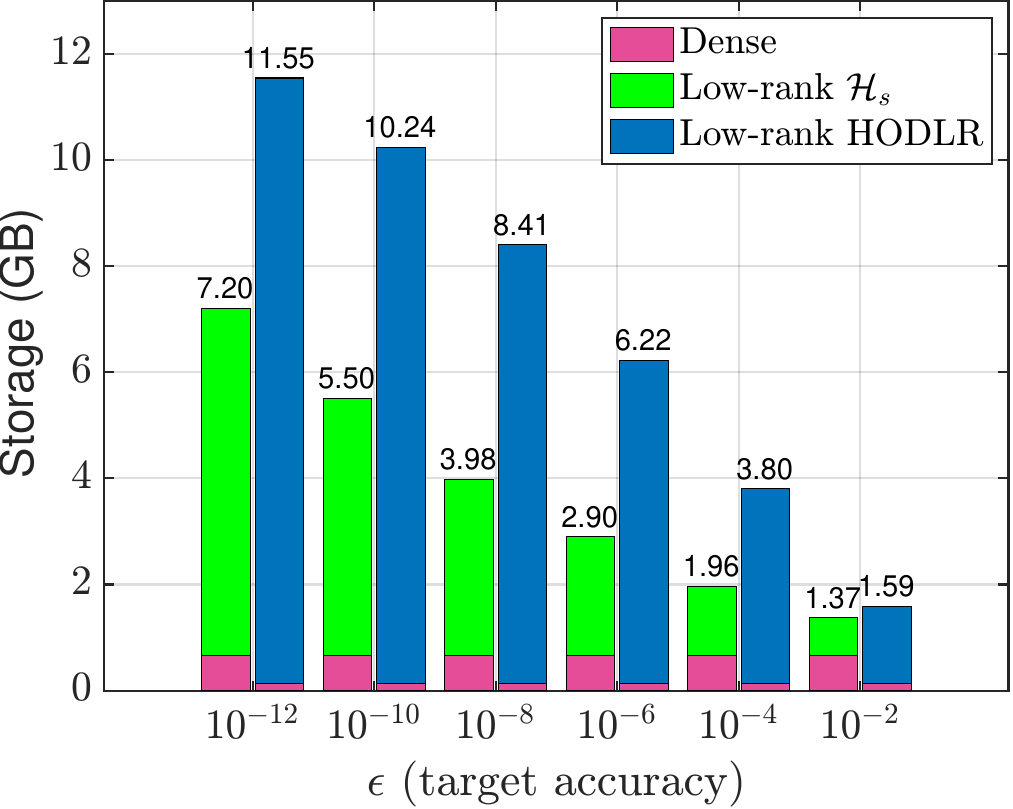}\label{fig:HODLR_H_comp_mem1}}
        \caption{Comparison of the storage costs of $\mathcal{H}_s$ and HODLR matrices for the kernel $1/r$ in $2$D.}
        \label{fig:HODLR_H_mem_comp}
    \end{figure}
Let $N=262144$ points be uniformly distributed inside the square domain $[-1,1]^2$. The dense kernel matrix $H \in \mathbb{R}^{N \times N}$ is formed using the kernel function $1/r$ as described in \cref{eq:kernel_mat}. We construct the $\mathcal{H}_s$ and HODLR representations of the dense kernel matrix $H$ for different values of target accuracy $\epsilon$ and $L$. The corresponding storage costs are compared in \Cref{fig:HODLR_H_mem_comp}. \Cref{fig:HODLR_H_mem_comp} shows that the HODLR matrix requires more low-rank storage than the $\mathcal{H}_{s}$-matrix, due to its higher numerical ranks. The difference increases as $N$ grows. However, the HODLR matrix has an advantage that dense blocks are confined to the diagonal, whereas the $\mathcal{H}_s$-matrix contains dense blocks along both the diagonal and neighboring blocks (cf. \cref{eq:hs_storage,eq:hodlr_storage}). Thus, the HODLR matrix requires less dense storage.

\section{Hybrid hierarchical matrices} \label{sec:hybrid_hmat}
Motivated by the previous experiment, we introduce a novel hybrid admissibility condition that exploits both standard and weak admissibility conditions. We then discuss the resulting hybrid hierarchical matrices, namely $\mathcal{H}_h$-matrices, and derive a criterion under which $\mathcal{H}_h$-matrices require less storage than $\mathcal{H}_s$-matrices.

\subsection{Hybrid admissibility condition}
The only disadvantage of $\mathcal{H}_s$-matrices compared to HODLR matrices is that they have a higher number of admissible blocks (cf. \cref{eq:standard_IL_size} and \cref{eq:weak_IL_size}) and more dense blocks at the leaf level. The rapidly growing number of admissible blocks can become a computational bottleneck for multidimensional problems as the tree depth increases. To mitigate this drawback of the $\mathcal{H}_s$-matrices, while keeping the leaf-level block size (or the tree depth $L$) unchanged, we exploit the weak admissibility condition at finer levels, where the admissible blocks have relatively low numerical rank due to their smaller sizes.

We propose a hybrid admissibility condition that uses standard admissibility at \emph{coarser levels}, where blocks are \textbf{large}, and weak admissibility at \emph{finer levels}, where blocks are relatively \textbf{small}. This approach reduces the number of admissible clusters at the \emph{finer levels}, and self-clusters are the only inadmissible clusters at the leaf level. 

Let $\mathcal{C}_i$ and $\mathcal{C}_j$ be a pair of clusters at level $l$, and let $\ell < L$ denote an intermediate level of the tree, which we referred to as the switching level. We define the hybrid admissibility condition as follows:

\begin{align} \label{eq:hybrid_admiss}
       \text{adm}_h \bkt{\mathcal{C}_i, \mathcal{C}_j} = \text{true} \iff
\begin{cases}
    \text{adm}_s \bkt{\mathcal{C}_i, \mathcal{C}_j} = \text{true} ,& \text{if } 1 \leq l \leq \ell \\
    \text{adm}_s \bkt{\mathcal{C}_i, \mathcal{C}_j} = \text{false} \text{ and } i \neq j,& \text{if } l = \ell \\
    \text{adm}_w \bkt{\mathcal{C}_i, \mathcal{C}_j} = \text{true} ,& \text{if } \ell +1 \leq l \leq L.
\end{cases}
\end{align}

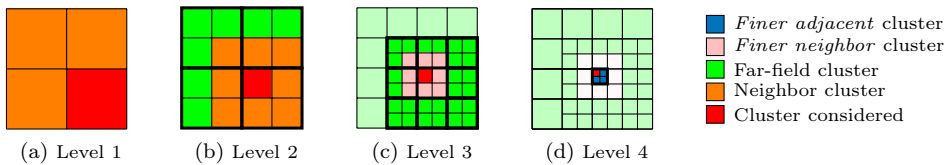
\begin{figure}[H]
\begin{center}
\subfloat[\scriptsize Level $1$]{
    \begin{tikzpicture}[scale=0.8]
        \fill [red] (1,0) rectangle (2,1);
        \fill [orange] (0,0) rectangle (1,1);
        \fill [orange] (1,1) rectangle (2,2);
        \fill [orange] (0,1) rectangle (1,2);
        \draw[step=1cm, black] (0, 0) grid (2, 2);
    \end{tikzpicture}
    \label{hybrid_interaction_1}
    }\quad 
    \subfloat[\scriptsize Level $2$]{
    \begin{tikzpicture}[scale=0.4]
        \draw[black] (0, 0) grid (4, 4);
        \fill [green] (0,0) rectangle (1,1);
        \fill [green] (0,1) rectangle (1,2);
        \fill [green] (0,2) rectangle (1,3);
        \fill [green] (0,3) rectangle (1,4);
        \fill [green] (1,3) rectangle (2,4);
        \fill [green] (2,3) rectangle (3,4);
        \fill [green] (3,3) rectangle (4,4);
        \fill [red] (2,1) rectangle (3,2);
        \fill [orange] (1,0) rectangle (2,1);
        \fill [orange] (1,1) rectangle (2,2);
        \fill [orange] (1,2) rectangle (2,3);
        \fill [orange] (2,2) rectangle (3,3);
        \fill [orange] (3,2) rectangle (4,3);
        \fill [orange] (2,0) rectangle (3,1);
        \fill [orange] (3,0) rectangle (4,1);
        \fill [orange] (3,1) rectangle (4,2);
        \node[anchor=north] at (1.5,2.8) {};
        \node[anchor=north] at (1.5,1.8) {};
        \node[anchor=north] at (.5,1.8) {};
        \draw[black] (0, 0) grid (4, 4);
        \draw[line width=0.4mm,  black] (0, 0) rectangle (2, 2);
        \draw[line width=0.4mm,  black] (2, 2) rectangle (4, 4);
        \draw[line width=0.4mm,  black] (2, 0) rectangle (4, 2);
        \draw[line width=0.4mm,  black] (0, 2) rectangle (2, 4);
    \end{tikzpicture}
    \label{hybrid_interaction_2}
    }\quad 
    \subfloat[\scriptsize Level $3$]{
    \begin{tikzpicture}[scale=0.4]
        \fill [green!25] (0,0) rectangle (1,1);
        \fill [green!25] (0,1) rectangle (1,2);
        \fill [green!25] (0,2) rectangle (1,3);
        \fill [green!25] (0,3) rectangle (1,4);
        \fill [green!25] (1,3) rectangle (2,4);
        \fill [green!25] (2,3) rectangle (3,4);
        \fill [green!25] (3,3) rectangle (4,4);
        \node[anchor=north] at (1.5,2.8) {};
        \node[anchor=north] at (1.5,1.8) {};
        \node[anchor=north] at (.5,1.8) {};
        \draw[black] (0, 0) grid (4, 4);
        \draw[line width=0.4mm,  fill=green] (1, 0) rectangle (2, 1);
        \draw[line width=0.4mm,  fill=green] (1, 1) rectangle (2, 2);
        \draw[line width=0.4mm,  fill=green] (2, 2) rectangle (3, 3);
        \draw[line width=0.4mm,  fill=green] (3, 2) rectangle (4, 3);
        \draw[line width=0.4mm,  fill=green] (2, 0) rectangle (3, 1);
        \draw[line width=0.4mm,  fill=green] (3, 0) rectangle (4, 1);
        \draw[line width=0.4mm,  fill=green] (3, 1) rectangle (4, 2);
        \draw[line width=0.4mm,  fill=green] (1, 2) rectangle (2, 3);
        \fill [pink] (1.5,1) rectangle (3,2.5);
        \fill [red] (2,1.5) rectangle (2.5,2);
        \draw[step=0.5cm, black] (1, 0) grid (4, 3);
        \draw[line width=0.4mm] (1, 0) rectangle (2, 1);
        \draw[line width=0.4mm] (1, 1) rectangle (2, 2);
        \draw[line width=0.4mm] (2, 2) rectangle (3, 3);
        \draw[line width=0.4mm] (3, 2) rectangle (4, 3);
        \draw[line width=0.4mm] (2, 0) rectangle (3, 1);
        \draw[line width=0.4mm] (3, 0) rectangle (4, 1);
        \draw[line width=0.4mm] (3, 1) rectangle (4, 2);
        \draw[line width=0.4mm] (1, 2) rectangle (2, 3);
    \end{tikzpicture}
    \label{hybrid_interaction_3}
    }\quad
    \subfloat[\scriptsize Level $4$]{
    \begin{tikzpicture}[scale=0.4]
        \fill [green!25] (0,0) rectangle (1,1);
        \fill [green!25] (0,1) rectangle (1,2);
        \fill [green!25] (0,2) rectangle (1,3);
        \fill [green!25] (0,3) rectangle (1,4);
        \fill [green!25] (1,3) rectangle (2,4);
        \fill [green!25] (2,3) rectangle (3,4);
        \fill [green!25] (3,3) rectangle (4,4);
        \node[anchor=north] at (1.5,2.8) {};
        \node[anchor=north] at (1.5,1.8) {};
        \node[anchor=north] at (.5,1.8) {};
        \draw[black] (0, 0) grid (4, 4);
        \fill [fill=green!25] (1, 0) rectangle (2, 1);
        \fill [fill=green!25] (1, 1) rectangle (2, 2);
        \fill [fill=green!25] (2, 2) rectangle (3, 3);
        \fill [fill=green!25] (3, 2) rectangle (4, 3);
        \fill [fill=green!25] (2, 0) rectangle (3, 1);
        \fill [fill=green!25] (3, 0) rectangle (4, 1);
        \fill [fill=green!25] (3, 1) rectangle (4, 2);
        \fill [fill=green!25] (1, 2) rectangle (2, 3);
        \fill [pink!10] (1.5,1) rectangle (3,2.5);
        \fill [red] (2,1.5) rectangle (2.5,2);
        \draw[step=0.5cm, black] (1, 0) grid (4, 3);
        \fill [matlabblue] (2.25,1.75) rectangle (2.5,2);
        \fill [matlabblue] (2,1.5) rectangle (2.25,1.75);
        \fill [matlabblue] (2.25,1.5) rectangle (2.5,1.75);
        \draw[line width=0.4mm,  black] (2, 1.5) rectangle (2.5, 2);
        \draw[black] (0, 2) rectangle (2, 4);
        \draw[step=1cm, black] (0, 0) grid (2, 2);
        \draw[black] (0, 0) grid (4, 4);
        \draw[step=0.25cm, black] (2, 1.5) grid (2.5, 2);
    \end{tikzpicture}
    \label{hybrid_interaction_4}
    }\quad
    \subfloat{
        \begin{tikzpicture}
            [
            box/.style={rectangle,draw=black, minimum size=0.1cm},scale=0.1
            ]
            \node[box,fill=red,,font=\tiny,label=right: \scriptsize Cluster considered,  anchor=west] at (-8,8){};
            \node[box,fill=orange,,font=\tiny,label=right:\scriptsize Neighbor cluster,  anchor=west] at (-8,11){};
            \node[box,fill=green,,font=\tiny,label=right:\scriptsize Far-field cluster,  anchor=west] at (-8,14){};
            \node[box,fill=pink,,font=\tiny,label=right:\scriptsize \emph{Finer neighbor} cluster,  anchor=west] at (-8,17){};
            \node[box,fill=matlabblue,,font=\tiny,label=right:\scriptsize \emph{Finer adjacent} cluster,  anchor=west] at (-8,20){};
        \end{tikzpicture}
    }
    \caption{Interaction list under the hybrid admissibility condition $(\eta = \sqrt{2})$ when $\ell=3$ and $L=4$. At level $3$, finer neighbor clusters are considered admissible. The admissible clusters are highlighted in deep green, pink and blue colored clusters enclosed within a noticeable \textbf{black} border.}
    \label{fig:hybrid_interaction}
\end{center}
\end{figure}

\Cref{fig:hybrid_interaction} illustrates the admissible clusters at different levels under the hybrid admissibility condition with $\eta = \sqrt{2}$ in two dimensions, for $\ell = 3$ and $L = 4$.

It follows from \cref{eq:hybrid_admiss} that the admissible clusters can be obtained as follows:
\begin{enumerate}
    \item The standard admissibility condition is applied from level $1$ to level $\ell$ of the tree (see \textbf{deep green} clusters in \Cref{hybrid_interaction_1,hybrid_interaction_2,hybrid_interaction_3}).
    \item At level $\ell$, the \emph{finer neighbor} clusters resulting from the standard admissibility condition are treated as admissible (see \textbf{deep pink} clusters in \Cref{hybrid_interaction_3}).
    \item The weak admissibility condition is applied from level $\ell + 1$ to level $L$ of the tree (see \textbf{deep blue} clusters in \Cref{hybrid_interaction_4}).
\end{enumerate}

\subsection{\texorpdfstring{$\mathcal{H}_h$}{Hh}-matrices}
Hierarchical matrices based on \cref{eq:hybrid_admiss} are referred to as hybrid hierarchical matrices and denoted by $\mathcal{H}_h$-matrices. We present the construction of $\mathcal{H}_h$-matrix representation in \Cref{alg:algo1}. It takes as input the matrix $H$, tree $\mathcal{T}^L$, the switching level $\ell$, and the target accuracy $\epsilon$. It returns the hybrid hierarchical representation $\widetilde{H}$ as output. 

 \begin{algorithm}
 \small
	\caption{$\mathcal{H}_h$-matrix representation.}\label{alg:algo1}
	\begin{algorithmic}[1]
        \State \textbf{Input:} $H \in \Rb^{N \times N}$, $\mathcal{T}^L$, $0 \leq \ell \leq L$, $\epsilon$. \qquad \textbf{Output:} $\widetilde{H}$.
		\State {\emph{Standard admissibility condition is applied levels} $1 : \ell$.}
\For{\texttt{$l=1:\ell$}} 
				\For{\texttt{$i=1:2^{dl}$}}
					\For{$\mathcal{C}^{\bkt{l}}_{j} \in$ $\mathcal{IL}_s (\mathcal{C}^{\bkt{l}}_{i})$}
                        \State $\big[\widetilde{U}^{\bkt{l}}_I, \widetilde{V}^{\bkt{l}}_J  \big] = \texttt{LRcompression} (H^{\bkt{l}}_{I,J}, \epsilon)$
                        \Comment{$I \gets$ index set of $\mathcal{C}^{\bkt{l}}_i$ and $J \gets$ index set of $\mathcal{C}^{\bkt{l}}_j$. Low-rank compression with target accuracy $\epsilon$.}
                        \State $\widetilde{H}^{\bkt{l}}_{I,J} \gets \widetilde{U}^{\bkt{l}}_I (\widetilde{V}^{\bkt{l}}_J)^*$
					\EndFor
				\EndFor
			\EndFor
                \If{$\ell == L$} \Comment{If $\ell=L$, it reduces to $\mathcal{H}_s$-matrix.}
              		\For{\texttt{$i=1:2^{d\ell}$}}
    					\For{$\mathcal{C}^{\bkt{\ell}}_{j} \in$ $\mathcal{N}_s (\mathcal{C}^{\bkt{\ell}}_{i})$}
                            \State $\widetilde{H}^{\bkt{\ell}}_{I,J} \gets H^{\bkt{\ell}}_{I,J}$ \Comment{Dense neighbor blocks at leaf level.}
    					\EndFor
    				\EndFor  
                \Else
    					\For{$\mathcal{C}^{\bkt{\ell}}_{j} \in$ $\mathcal{N}_s (\mathcal{C}^{\bkt{\ell}}_{i})$}
                            \State $\big[\widetilde{U}^{\bkt{\ell}}_I, \widetilde{V}^{\bkt{\ell}}_J  \big] = \texttt{LRcompression} (H^{\bkt{\ell}}_{I,J}, \epsilon)$, \quad $\widetilde{H}^{\bkt{\ell}}_{I,J} \gets \widetilde{U}^{\bkt{\ell}}_I (\widetilde{V}^{\bkt{\ell}}_J)^*$
                            \EndFor
                \EndIf

            \State{\emph{Weak admissibility condition is applied levels} $\ell+1 : L$.}
            \For{\texttt{$l=\ell+1:L$}} 
				\For{\texttt{$i=1:2^{dl}$}}
					\For{$\mathcal{C}^{\bkt{l}}_{j} \in$ $\mathcal{IL}_w (\mathcal{C}^{\bkt{l}}_{i})$}
                        \State $\big[\widetilde{U}^{\bkt{l}}_I, \widetilde{V}^{\bkt{l}}_J  \big] = \texttt{LRcompression} (H^{\bkt{l}}_{I,J}, \epsilon)$, \quad $\widetilde{H}^{\bkt{l}}_{I,J} \gets \widetilde{U}^{\bkt{l}}_I (\widetilde{V}^{\bkt{l}}_J)^*$
					\EndFor
				\EndFor
			\EndFor

            \State{\emph{Dense diagonal blocks at leaf level}.}
            \For{\texttt{$i=1:2^{dL}$}}
                \State $\widetilde{H}^{\bkt{L}}_{I,I} \gets H^{\bkt{L}}_{I,I}$ \Comment{$I \gets$ index set of $\mathcal{C}^{\bkt{L}}_i$}
            \EndFor
	\end{algorithmic}
\end{algorithm}

In the implementation, a dense matrix $H$ is not stored explicitly; instead, a function is provided that evaluates the $(i,j)^{th}$ entry of $H$. The admissible blocks are generated on the fly and compressed with the prescribed accuracy $\epsilon$.

From \Cref{alg:algo1}, we can see that when $\ell < L$, dense blocks are confined along the diagonal.

\subsubsection*{Special cases}
   As special cases of $\mathcal{H}_h$-matrices, purely $\mathcal{H}_s$-matrices and purely HODLR matrices can be obtained easily as follows:
   \begin{itemize}
       \item Setting $\ell = L$ in \Cref{alg:algo1} generates the $\mathcal{H}_s$-matrices (see lines $3-16$ \& lines $31-33$ of \Cref{alg:algo1}).
       \item Setting $\ell = 0$ in \Cref{alg:algo1} generates the HODLR matrices (see lines $23-33$ of \Cref{alg:algo1}). 
   \end{itemize}

$\mathcal{H}_h$-matrices corresponding to different choices of $\ell$ are illustrated in \Cref{fig:hmat_figs}. A \textbf{zoomed-in view} (see online color version) of \Cref{fig:hmat_figs} shows the $\mathcal{H}_s$-matrix ($\ell=L=4$) and $\mathcal{H}_h$-matrix with $\ell (= 3) < L (= 4)$. If $\ell = 3$ and $L = 4$, the far-field blocks from levels 1 through 3 are identified using the standard admissibility condition (green blocks in \Cref{fig:hybrid_hmat}), while the finer neighbor blocks are compressed at level 3 (pink blocks in \Cref{fig:hybrid_hmat}). At level 4, off-diagonal blocks are compressed according to the weak admissibility condition (blue blocks in \Cref{fig:hybrid_hmat}). \Cref{fig:strong_hmat} illustrates $\mathcal{H}_s$-matrix (orange and red are dense blocks at leaf level). Comparing \Cref{fig:hybrid_hmat} and \Cref{fig:strong_hmat} level by level helps in understanding how the construction is done.

    \begin{figure}[H]
        \centering
        \subfloat[\scriptsize  $\mathcal{H}_h$-matrix with $\ell=3$ and $L=4$.]{
        \includegraphics[scale=.44]{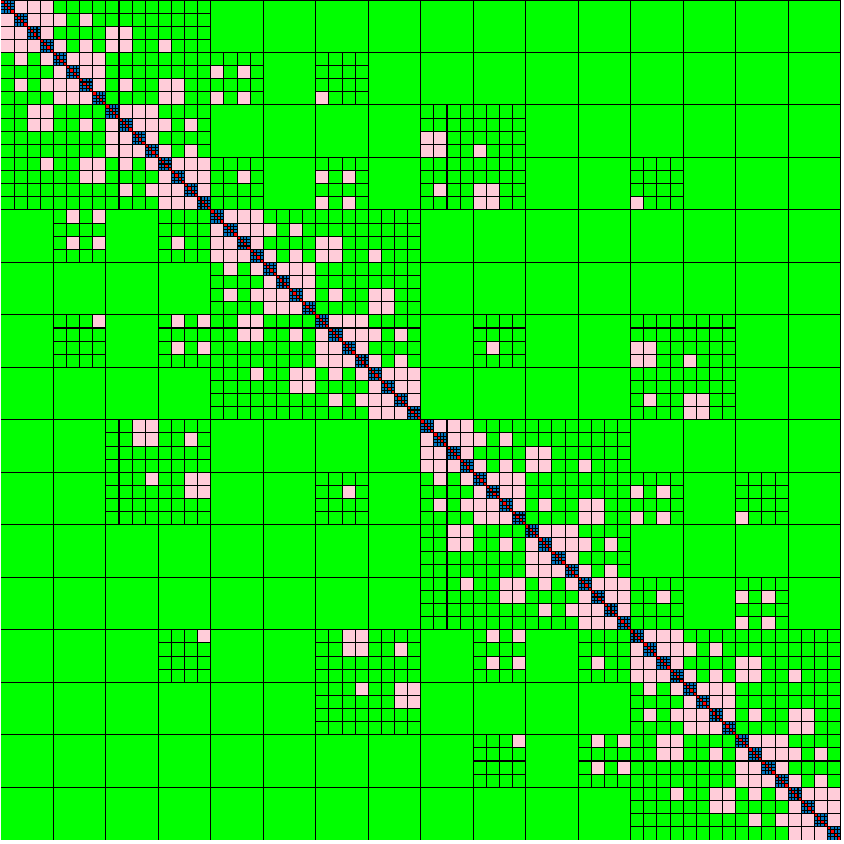}\label{fig:hybrid_hmat}}
        \subfloat[\scriptsize  $\mathcal{H}_h$-matrix with $\ell=L=4$, i.e., $\mathcal{H}_{s}$-matrix.]{
        \includegraphics[scale=.44]{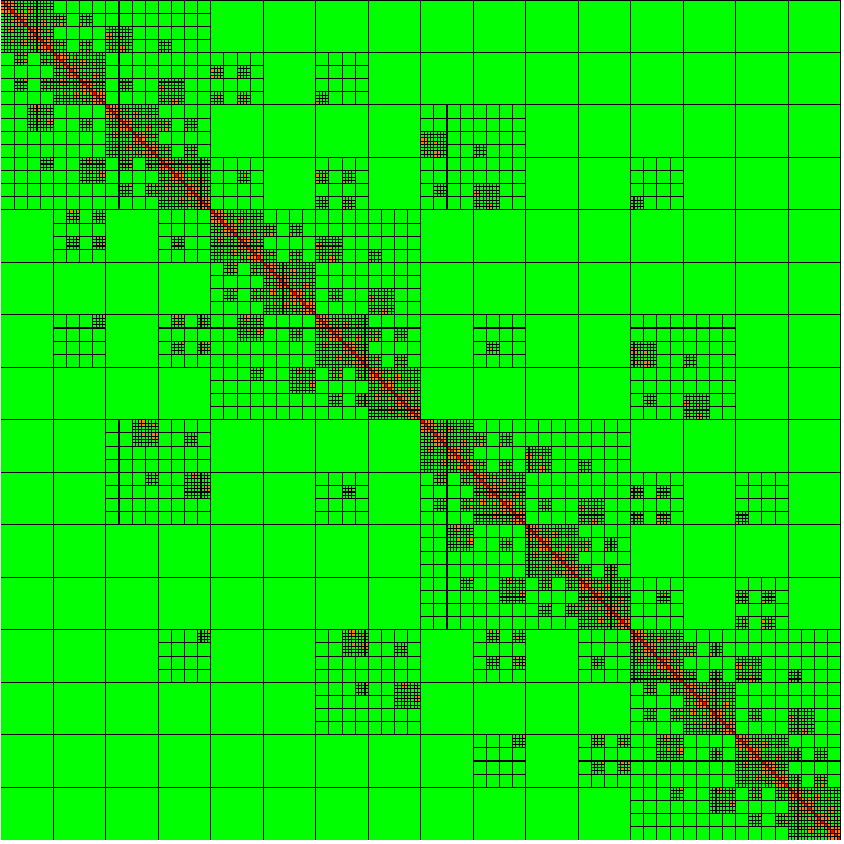}\label{fig:strong_hmat}} \quad
        \subfloat{
        \begin{tikzpicture}
            [
            box/.style={rectangle,draw=black, minimum size=0.1cm},scale=0.1
            ]
            \node[box,fill=green,,font=\tiny,label=right:\scriptsize Far-field,  anchor=west] at (-60,0){};
            \node[box,fill=pink,,font=\tiny,label=right:\scriptsize \emph{Finer neighbor},  anchor=west] at (-44,0){};
            \node[box,fill=matlabblue,,font=\tiny,label=right:\scriptsize \emph{Finer adjacent},  anchor=west] at (-20,0){};
            \node[box,fill=orange,,font=\tiny,label=right:\scriptsize Neighbor (only for $\mathcal{H}_s$-matrix),  anchor=west] at (5,0){};
            \node[box,fill=red,,font=\tiny,label=right: \scriptsize Diagonal,  anchor=west] at (50,0){};
        \end{tikzpicture}
    }
        \caption{A \textbf{zoomed-in view} (in online version) shows how the $\mathcal{H}_h$-matrices are constructed for $\ell < L$ and $\ell=L$. The green, pink and blue colored blocks are compressed in \Cref{fig:hybrid_hmat}.}
        \label{fig:hmat_figs}
    \end{figure}

\subsection{Storage comparison of \texorpdfstring{$\mathcal{H}_s$}{Hs} and \texorpdfstring{$\mathcal{H}_h$}{Hh} matrices} \label{subsec:Hs_vs_Hh}
In the $\mathcal{H}_h$-matrix representation, the standard admissibility condition is applied from level $1$ to level $\ell$. Let $\mathscr{L}$  denote the total low-rank storage of the admissible blocks selected based on the standard admissibility condition up to level $\ell$, and let $\mathscr{D}$ denote the total storage of the leaf-level dense diagonal blocks. 

The storage cost of $\mathcal{H}_s$-matrix representation is given by

\begin{align} \label{eq:storage_Hs}
    \mathcal{S}_s = \mathscr{L} + S_1 \bkt{\ell} + \mathscr{D}, \qquad \text{ where}
\end{align}

\begin{equation} \label{eq:S1}
\resizebox{11.9cm}{!}{$
\begin{aligned}
  S_1 \bkt{\ell} &= \overbrace{\dsum_{l=\ell+1}^{L} \dsum_{i=1}^{2^{dl}} \dsum_{\mathcal{C}_j^{\bkt{l}} \in \mathcal{IL}_s (\mathcal{C}_i^{\bkt{l}})} {p^{\prime}}_{ij}^{\bkt{l}} (\abs{\mathcal{C}_i^{\bkt{l}}} + \abs{\mathcal{C}_j^{\bkt{l}}})}^{\text{\scriptsize scales as } \mathcal{O} (C^{\prime} p^{\prime} N \bkt{L-\ell})} + \overbrace{\dsum_{i=1}^{2^{d L}} \dsum_{\mathcal{C}_j^{\bkt{L}} \in \mathcal{N}_s (\mathcal{C}_i^{\bkt{L}})} \abs{\mathcal{C}_i^{\bkt{L}}} \abs{\mathcal{C}_j^{\bkt{L}}}}^{\text{\scriptsize scales as } \mathcal{O} (C^{\prime\prime} N n_{\max}^2)}\\ & \equiv S_1^{\prime} + S_1^{\prime \prime}. 
\end{aligned}
$}
\end{equation}

The storage cost of the $\mathcal{H}_h$-matrix representation \textbf{with $\ell<L$} is given by

\begin{align} \label{eq:storage_Hh}
    \mathcal{S}_h = \mathscr{L} + S_2 \bkt{\ell} + \mathscr{D}, \qquad \text{ where}
\end{align} 

\begin{equation} \label{eq:S2}
\resizebox{11.9cm}{!}{$
\begin{aligned}
S_2\bkt{\ell} &= 
\overbrace{
\sum_{i=1}^{2^{d \ell}} 
\sum_{\mathcal{C}_j^{(\ell)} \in \mathcal{N}_s(\mathcal{C}_i^{(\ell)})} 
p_{ij}^{\prime\prime (\ell)} (\abs{\mathcal{C}_i^{(\ell)}} + \abs{\mathcal{C}_j^{(\ell)}})
}^{\text{\scriptsize scales as } \mathcal{O}\bkt{C^{\prime \prime} p^{\prime\prime} N}} + 
\overbrace{
\sum_{l=\ell+1}^{L} 
\sum_{i=1}^{2^{dl}} 
\sum_{\mathcal{C}_j^{(l)} \in \mathcal{IL}_w(\mathcal{C}_i^{(l)})} 
p_{ij}^{\prime\prime\prime (l)} (\abs{\mathcal{C}_i^{(l)}} + \abs{\mathcal{C}_j^{(l)}})
}^{\text{\scriptsize scales as } \mathcal{O}\bkt{C^{\prime\prime\prime} p^{\prime\prime\prime} N (L-\ell)}} 
 \\ 
 & \equiv S_2^{\prime \prime} + S_2^{\prime \prime \prime}.
\end{aligned}
$}
\end{equation}

From \cref{eq:storage_Hs} and \cref{eq:storage_Hh}, it follows that the terms $\mathscr{L}$ and $\mathscr{D}$ are common to both $\mathcal{S}_s$ and $\mathcal{S}_h$. The only difference lies in the terms $S_1\bkt{\ell}$ and $S_2\bkt{\ell}$. Deriving exact closed-form expressions for $S_1\bkt{\ell}$ and $S_2\bkt{\ell}$ is difficult. Instead, we show their asymptotic behavior by specifying the scaling of each contributing term. The constants $C^{\prime}$, $C^{\prime\prime}$, and $C^{\prime\prime\prime}$ are defined in \cref{eq:standard_IL_size}, \cref{eq:standard_N_size}, and \cref{eq:weak_IL_size}, respectively. The maximum numerical ranks $p^{\prime}$, $p^{\prime\prime}$, and $p^{\prime\prime\prime}$ are defined as follows:

\begin{align*}
    p^{\prime} = \max_{\substack{\ell+1 \leq l \leq L \\ i,j}} {p^{\prime}}_{ij}^{\bkt{l}}, \quad p^{\prime\prime} = \max_{\substack{l = \ell \\ i,j}} {p}_{ij}^{\prime \prime \bkt{l}}, \quad p^{ \prime\prime\prime} = \max_{\substack{\ell+1 \leq l \leq L \\ i,j}} {p}_{ij}^{\prime\prime\prime \bkt{l}}.
\end{align*}

From \cref{eq:S1} and \cref{eq:S2} we can see that:
\begin{itemize}
    \item The term $S_1^{\prime}$ in $S_1 \bkt{\ell}$ scales as $\mathcal{O} \bkt{C^{\prime} p^{\prime} N \bkt{L-\ell}}$, while the term $S_2^{\prime \prime \prime}$ in $S_2$ scales as $\mathcal{O} \bkt{C^{\prime\prime\prime} p^{\prime\prime\prime} N \bkt{L-\ell}}$. 
    Note that $C^{\prime} > C^{\prime\prime\prime}$ for $d>1$ and $C^{\prime}$ grows more rapidly than $C^{\prime\prime\prime}$ as the dimension $d$ increases. Generally, $p^{\prime} < p^{\prime\prime\prime}$ because $p^{\prime}$ corresponds to the numerical rank of far-field blocks. However, since the weak admissibility condition is applied at \emph{finer} levels where block sizes are smaller, $p^{\prime\prime\prime}$ is not expected to be substantially larger than $p^{\prime}$. In fact, for moderate to higher values of target accuracy $\epsilon$, the difference between $p^{\prime}$ and $p^{\prime\prime\prime}$ is very small.
    \item In $S_1 \bkt{\ell}$, the term $S_1^{\prime \prime}$, corresponding to the dense storage of neighbor blocks, scales as $\mathcal{O} \bkt{C^{\prime\prime} N n_{\max}^2}$, whereas the term $S_2^{\prime \prime}$ in $S_2 \bkt{\ell}$ scales as $\mathcal{O}\bkt{C^{\prime \prime} p^{\prime\prime} N}$.
\end{itemize}

The storage cost of the $\mathcal{H}_h$-matrix at finer levels is controlled by the choice of the switching level $\ell$. We present the following condition under which $\mathcal{H}_h$-matrices achieve lower storage costs.

\subsubsection*{Criterion for lower storage cost in \texorpdfstring{$\mathcal{H}_h$}{Hh}-matrix}
From \cref{eq:storage_Hs} and \cref{eq:storage_Hh}, it follows that $\mathcal{S}_s > \mathcal{S}_h$ iff $S_1 \bkt{\ell} > S_2 \bkt{\ell}$, i.e., $S_1 \bkt{\ell}/S_2 \bkt{\ell} > 1$.

Here, an interesting question that arises is how to choose the switching level $\ell$ to maximize the storage gains over $\mathcal{H}_s$-matrices, i.e.,

\begin{align}
    \ell^* = \arg \max_{0 \leq \ell \leq L} \bkt{S_1 \bkt{\ell} - S_2 \bkt{\ell}}
\end{align}

The optimal switching level $\ell^*$ depends on many factors such as the matrix $H$, tree depth $L$, and target accuracy $\epsilon$. We propose \Cref{alg:algo_optimal_level} to determine $\ell^*$. 

First, the $\mathcal{H}_s$-matrix representation of $H$ is constructed, and the values of $S_1 \bkt{\ell}$ are recorded for $0 \leq \ell \leq L$. The values of $S_2 \bkt{\ell}$ can be computed with the help of \Cref{alg:algo1}. Finally, the tree is traversed from the bottom to the top direction using \Cref{alg:algo_optimal_level} in order to identify $\ell^{*}$.

 \begin{algorithm}[H]
 \small
	\caption{A choice of $\ell$ that gives maximum storage gains over $\mathcal{H}_s$-matrices.}\label{alg:algo_optimal_level}
    \begin{algorithmic}[1]
        \State \textbf{Output:} $\ell^{*}$
        \State $\ell \gets L$, \quad Compute $S_2(\ell)$
        \State $s \gets S_1(\ell) - S_2(\ell)$ \Comment{$S_1(L) = S_2(L)$.}
        
        \While{$\ell > 0$}
            \State Compute $S_2(\ell-1)$, \quad $s_{\text{new}} \gets S_1(\ell-1) - S_2(\ell-1)$
        
            \If{$s_{\text{new}} \leq s$}
                \State \textbf{break}
            \EndIf
            
            \State $s \gets s_{\text{new}}$, \quad $\ell \gets \ell-1$
        \EndWhile
        \State $\ell^{*} = \ell$
	\end{algorithmic}
\end{algorithm}

Note that $\ell^*$ obtained in \emph{uniform} precision scheme may differ when $\mathcal{H}$-matrices are represented in \emph{mixed} precision scheme, since the storage estimates $\bkt{S_1(\ell), S_2(\ell)}$ must consider the varying numbers of precision bits. A detailed investigation of the behavior of $\ell^*$ is beyond the scope of the current work. In the numerical experiments, unless stated otherwise, $\mathcal{H}_h$-matrices are constructed with $\ell = L-1$.

We perform two numerical experiments to compare the storage costs of the $\mathcal{H}_s$ and $\mathcal{H}_h$ matrices. 

First, we repeat the same numerical experiment in two dimensions as described in \Cref{subsec:H_HODLR}. For various choices of the target accuracy $\epsilon$ and the tree depth $L$, we construct the $\mathcal{H}_s$ and $\mathcal{H}_h$ representations of the dense kernel matrix $H$ and compare their storage costs, as illustrated in \Cref{fig:bar_mem_comp_1}. The bar plots show that the $\mathcal{H}_h$-matrix requires less dense storage and achieves a lower overall storage cost than the $\mathcal{H}_s$-matrix. Additionally, it can be observed that while the dense storage requirement of the $\mathcal{H}_h$-matrix is the same as that of the HODLR matrix, the $\mathcal{H}_h$-matrix achieves a significantly lower total storage requirement (see \Cref{fig:HODLR_H_mem_comp} and \Cref{fig:bar_mem_comp_1}).

    \begin{figure}
        \centering
        \subfloat[\scriptsize  $N = 262144$ and $n_{\max} = 256$ $(L=5)$.]{
        \includegraphics[scale=.34]{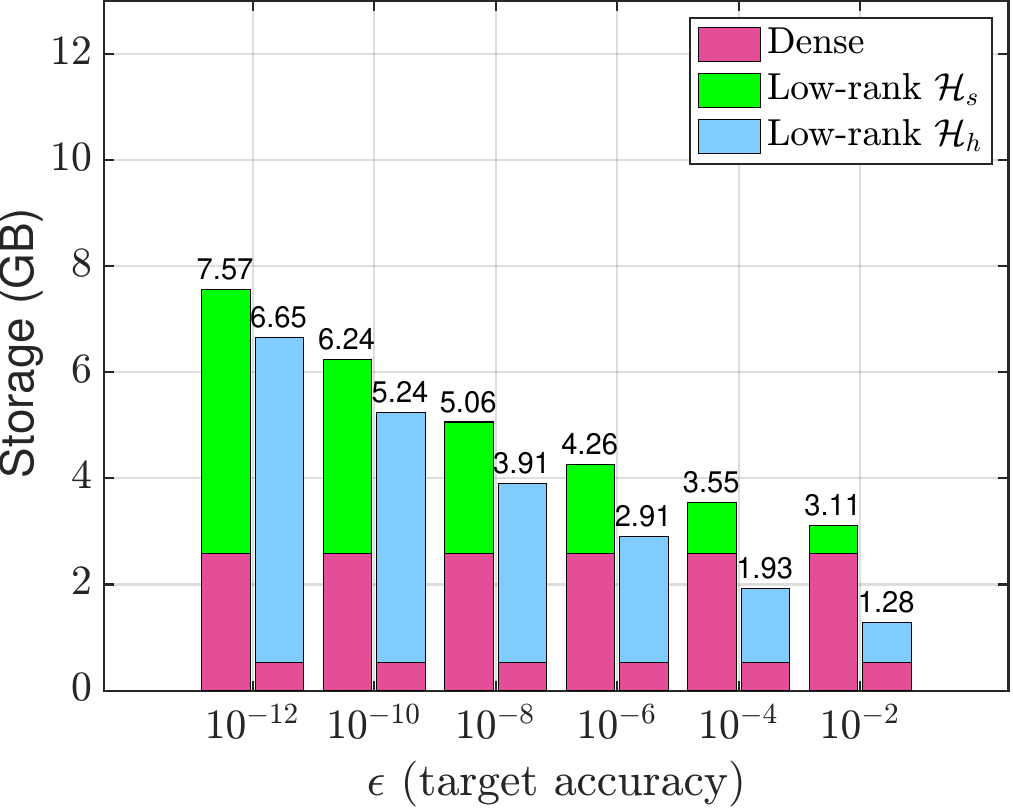}\label{fig:Hs_Hh_comp_mem2_2d}}
        \subfloat[\scriptsize  $N = 262144$ and $n_{\max} = 64$ $(L=6)$.]{
        \includegraphics[scale=.34]{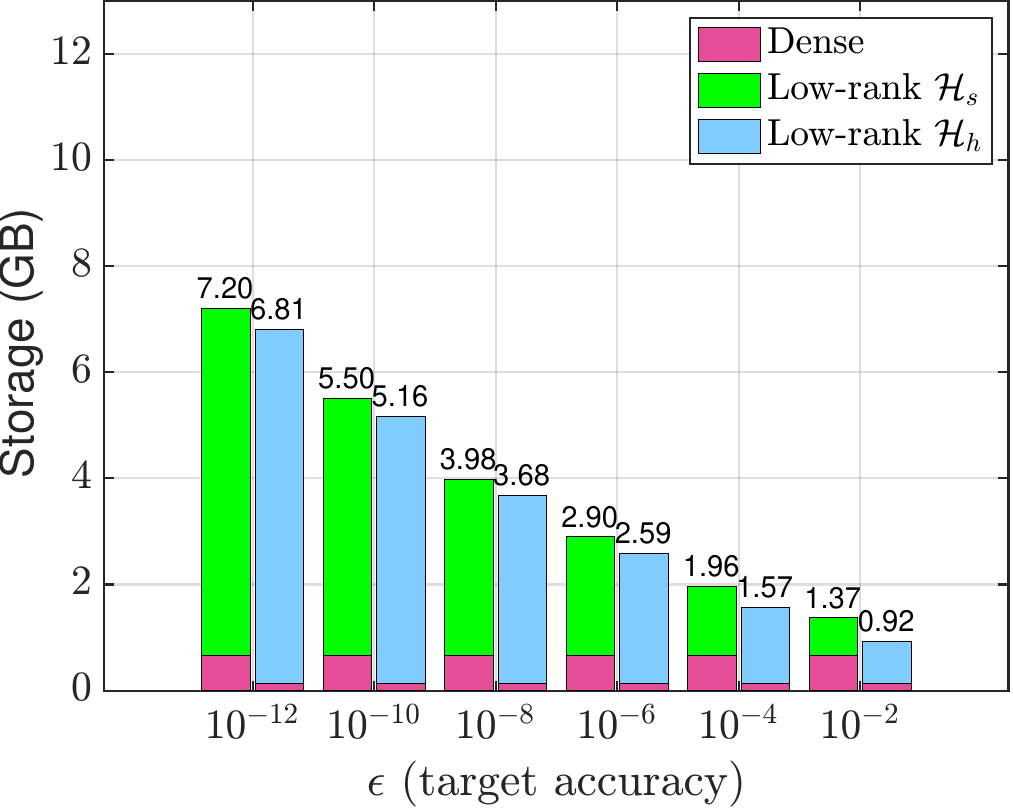}\label{fig:Hs_Hh_comp_mem1_2d}}
        \caption{Comparison of the storage costs of various $\mathcal{H}$-matrices for the kernel $1/r$ in $2$D.}
        \label{fig:bar_mem_comp_1}
    \end{figure}

    \begin{figure}
        \centering
        \subfloat[\scriptsize  $N = 64000$, $n_{\max} = 125$ $(L=3)$.]{
        \includegraphics[scale=.34]{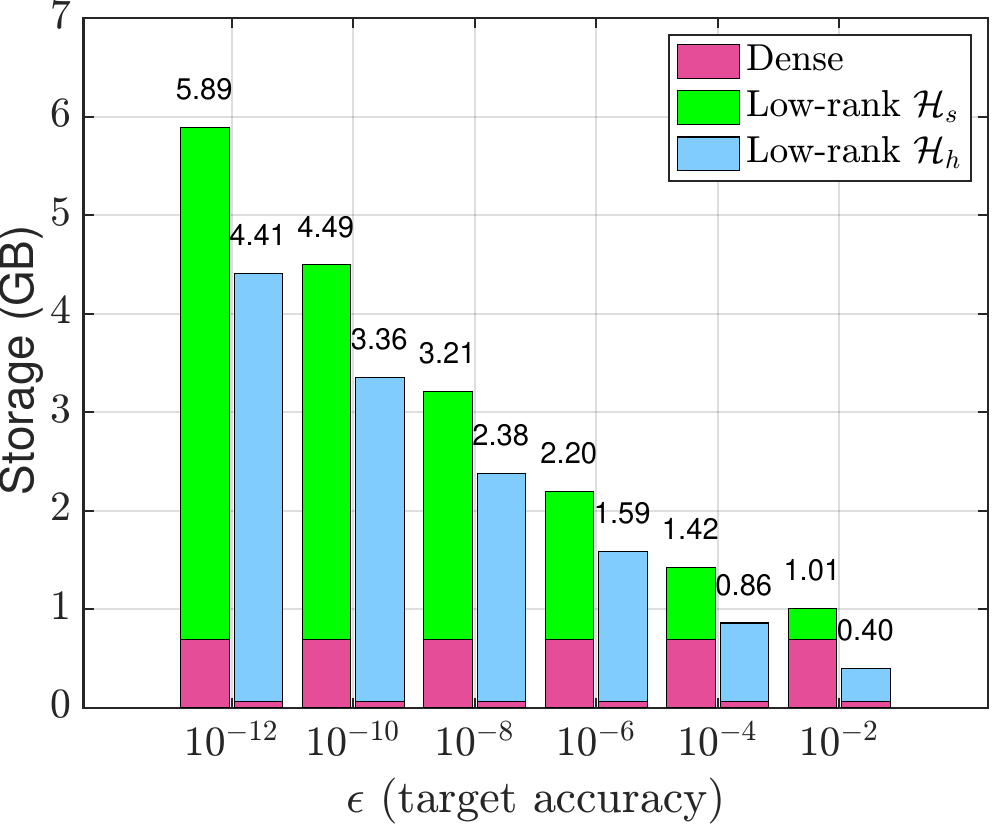}\label{fig:Hs_Hh_comp_mem1_3d}}
        \subfloat[\scriptsize  $N = 64000$, $n_{\max} = 64$ $(L=4)$.]{
        \includegraphics[scale=.34]{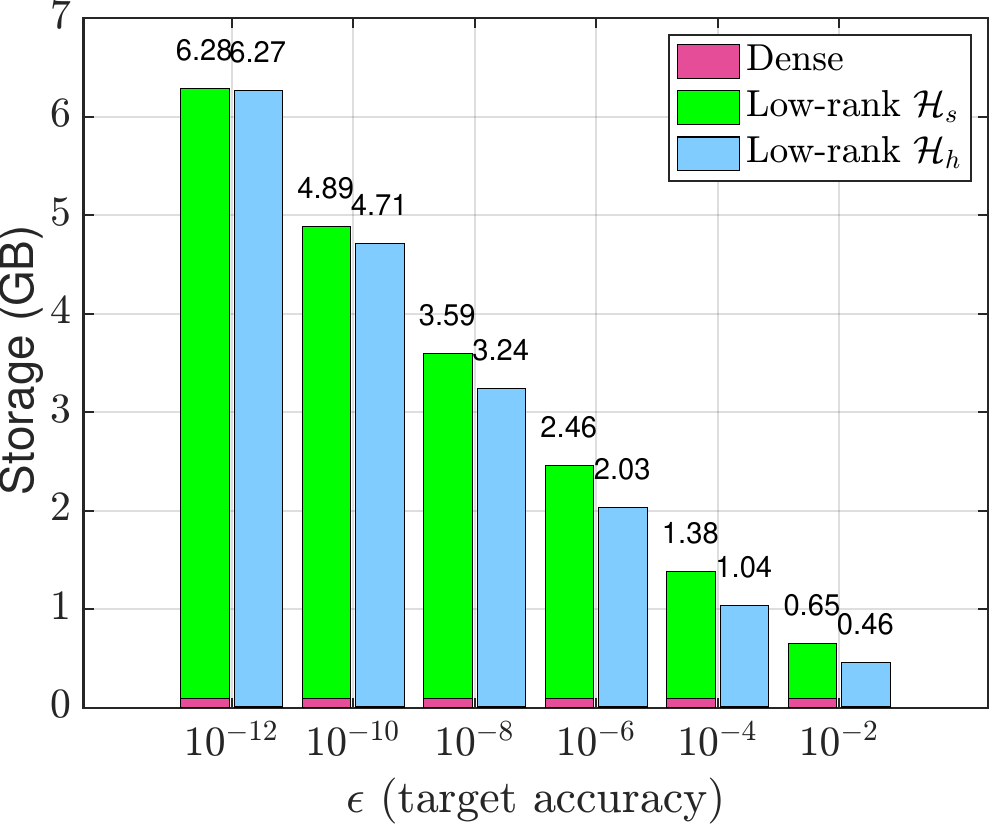}\label{fig:Hs_Hh_comp_mem2_3d}}
        \caption{Comparison of the storage costs of various $\mathcal{H}$-matrices for the kernel $1/r$ in $3$D.}
        \label{fig:bar_mem_comp_2}
    \end{figure}

In the second experiment, we consider $N=64000$ points uniformly distributed in the cube $[-1,1]^3$ and the dense kernel matrix $H \in \mathbb{R}^{N \times N}$ is formed using the kernel function $1/r$ as described in \cref{eq:kernel_mat}. We show the storage costs of the $\mathcal{H}_s$ and $\mathcal{H}_h$ representations of the dense kernel matrix $H$ in \Cref{fig:bar_mem_comp_2}. The bar plots demonstrate that the $\mathcal{H}_h$-matrix requires less dense storage and achieves a lower overall storage cost than the $\mathcal{H}_s$-matrix. We also tabulate the the values of $S_1 \bkt{\ell}$, $S_2 \bkt{\ell}$, and $S_1 \bkt{\ell}/S_2 \bkt{\ell}$ corresponding to various choices of $\epsilon$ and $L$ in \Cref{tab:s1_s_2}. The results show that $S_1 \bkt{\ell}/S_2 \bkt{\ell} > 1$ for all cases, and therefore $\mathcal{S}_s > \mathcal{S}_h$.

\begin{table}[H]
\centering
\resizebox{10cm}{!}{%
\begin{tabular}{|l|lll||lll|}
\hline
\multirow{2}{*}{\quad $\epsilon$} & \multicolumn{3}{l||}{$N = 64000$, $n_{\max} = 125$ $(L=3)$}                                          & \multicolumn{3}{l|}{$N = 64000$, $n_{\max} = 64$ $(L=4)$}                                          \\ \cline{2-7} 
                  & \multicolumn{1}{l|}{$S_1 \bkt{\ell}$} & \multicolumn{1}{l|}{$S_2\bkt{\ell}$} & $S_1\bkt{\ell}/S_2\bkt{\ell}$ & \multicolumn{1}{l|}{$S_1\bkt{\ell}$} & \multicolumn{1}{l|}{$S_2\bkt{\ell}$} & $S_1\bkt{\ell}/S_2\bkt{\ell}$ \\ \hline
                  $10^{-12}$& \multicolumn{1}{l|}{3.86} & \multicolumn{1}{l|}{2.38} & 1.62      & \multicolumn{1}{l|}{1.08} & \multicolumn{1}{l|}{1.06} & 1.02      \\ 
                  $10^{-10}$& \multicolumn{1}{l|}{3.06} & \multicolumn{1}{l|}{1.92} & 1.59      & \multicolumn{1}{l|}{1.08} & \multicolumn{1}{l|}{0.90} & 1.20      \\ 
                  $10^{-8}$& \multicolumn{1}{l|}{2.27} & \multicolumn{1}{l|}{1.43} & 1.59      & \multicolumn{1}{l|}{1.07} & \multicolumn{1}{l|}{0.71} & 1.51      \\ 
                  $10^{-6}$& \multicolumn{1}{l|}{1.63} & \multicolumn{1}{l|}{1.02} & 1.60      & \multicolumn{1}{l|}{0.95} & \multicolumn{1}{l|}{0.52} & 1.83      \\ 
                  $10^{-4}$& \multicolumn{1}{l|}{1.12} & \multicolumn{1}{l|}{0.56} & 2.00      & \multicolumn{1}{l|}{0.65} & \multicolumn{1}{l|}{0.31} & 2.10      \\ 
                  $10^{-2}$& \multicolumn{1}{l|}{0.84} & \multicolumn{1}{l|}{0.23} & 3.65      & \multicolumn{1}{l|}{0.33} & \multicolumn{1}{l|}{0.14} & 2.36      \\ \hline
\end{tabular}}
\caption{The values of $S_1 \bkt{\ell}/S_2 \bkt{\ell}$ for the kernel $1/r$ in $3$D are shown when $\ell=L-1$.}
\label{tab:s1_s_2}
\end{table}

\section{Adaptive mixed precision \texorpdfstring{$\mathcal{H}_h$}{Hh}-matrices} \label{sec:adaptive_prec_Hmatrix}
We analyze the rounding error of $\mathcal{H}_h$-matrices and present an adaptive mixed precision $\mathcal{H}_h$-matrix representation. 

\subsection{\texorpdfstring{$\mathcal{H}_h \bkt{p,\epsilon}$}{H(p,epsilon)}-matrices}
The $\mathcal{H}_h \bkt{p,\epsilon}$-matrix is defined for a target accuracy $\epsilon$ as follows.

\begin{definition}
\emph{($\mathcal{H}_h \bkt{p,\epsilon}$-matrix)}.
Let $\widetilde{H}$ be the hybrid hierarchical matrix representation of the dense matrix $H$. $\widetilde{H}$ is said to be an $\mathcal{H}_h \bkt{p,\epsilon}$-matrix if, for each admissible block $H^{\bkt{l}}_{I,J}$, where $I$ and $J$ indicate the index sets of the admissible clusters $\mathcal{C}_i^{\bkt{l}}$ and $\mathcal{C}_j^{\bkt{l}}$, respectively, the following conditions hold:

\begin{enumerate}
    \item Each admissible block has rank at most $p$, i.e., $\text{rank } (\widetilde{H}^{\bkt{l}}_{I,J}) \leq p$.
    \item Each admissible block satisfies the error bound 
    \begin{align} \label{eq:adm_block_norm}
        \magn{H^{\bkt{l}}_{I,J} - \widetilde{H}^{\bkt{l}}_{I,J}} \leq \epsilon \magn{H^{\bkt{l}}_{I,J}}.
    \end{align}
\end{enumerate}

\end{definition}

The following lemma gives a bound on the error of $\mathcal{H}_h \bkt{p,\epsilon}$-matrices.
\begin{lemma} \label{lemma:global_error}
Let $\widetilde{H}$ be an $\mathcal{H}_h \bkt{p,\epsilon}$-matrix format of the matrix $H$. Then, with respect to the Frobenius norm, the approximation error satisfies $\magn{H - \widetilde{H}} \leq \epsilon \magn{H}$.
\end{lemma}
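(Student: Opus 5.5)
The plan is to exploit the fact that the blocks produced by \Cref{alg:algo1} form a partition of the index set $\{1,\dots,N\}\times\{1,\dots,N\}$. Each entry of $H$ belongs to exactly one block of the $\mathcal{H}_h$-structure. That block is either an admissible block $H^{\bkt{l}}_{I,J}$ (a far-field block at levels $1,\dots,\ell$, a finer neighbor block at level $\ell$, or an adjacent block at levels $\ell+1,\dots,L$), or a leaf-level dense diagonal block $H^{\bkt{L}}_{I,I}$. Since the squared Frobenius norm is additive over any partition of the entries, we can write
\begin{align*}
\magn{H-\widetilde{H}}^2 = \sum_{\text{adm. } (I,J)} \magn{H^{\bkt{l}}_{I,J}-\widetilde{H}^{\bkt{l}}_{I,J}}^2 + \sum_{i=1}^{2^{dL}} \magn{H^{\bkt{L}}_{I,I}-\widetilde{H}^{\bkt{L}}_{I,I}}^2 .
\end{align*}

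The dense diagonal blocks are copied exactly (last loop of \Cref{alg:algo1}), so the second sum vanishes. For each admissible block we apply \cref{eq:adm_block_norm} and obtain
\begin{align*}
\magn{H-\widetilde{H}}^2 \leq \epsilon^2 \sum_{\text{adm. } (I,J)} \magn{H^{\bkt{l}}_{I,J}}^2 \leq \epsilon^2 \magn{H}^2 .
\end{align*}
The last inequality uses the partition property again: the admissible blocks are disjoint sub-blocks of $H$, so the sum of their squared norms is at most $\magn{H}^2$. Taking square roots gives the claim. Note that the rank condition in the definition is not needed for this bound.

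The main point requiring care is the partition claim itself, namely that the blocks are disjoint and cover all of $H$. This requires checking the three regimes of \cref{eq:hybrid_admiss}. At levels $1,\dots,\ell$, the interaction lists $\mathcal{IL}_s$ select exactly those pairs whose parents are inadmissible, so no entry is compressed twice across levels. At level $\ell$, the pairs not yet covered are the self-pairs and the neighbor pairs $\mathcal{N}_s$, and the latter are compressed there. Below level $\ell$, only the diagonal self-blocks remain; their children are split by weak admissibility into off-diagonal pairs $\mathcal{IL}_w$, recursing down to the leaf diagonal blocks. An induction on the level $l$ should suffice: the uncovered set after processing level $l$ is the union of the inadmissible pairs at level $l$, and that union is refined at level $l+1$. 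The special cases $\ell=L$, where the leaf neighbor blocks are stored densely with zero error, and $\ell=0$ fit the same argument.
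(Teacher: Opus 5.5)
Your proposal is correct and follows essentially the same route as the paper. The paper also splits $\|H-\widetilde{H}\|^2$ blockwise over the far-field, finer-neighbor and adjacent admissible blocks, with the exactly stored leaf diagonal blocks contributing nothing, and applies the blockwise bound $\epsilon\|H_{I,J}\|$. It then adds the nonnegative terms $\epsilon^2\sum_i \|H^{(L)}_{I,I}\|^2$ to reach exactly $\epsilon^2\|H\|^2$, which is your disjoint-sub-block inequality. The one difference is that you sketch why the blocks partition $H$, which the paper simply takes for granted.
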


\begin{proof}
The proof can be found in \Cref{sec:app_proof_lemma}.
\end{proof}

\subsection{Error analysis of mixed precision \texorpdfstring{$\mathcal{H}_h$}{Hh}-matrices}
If $I \times J$ is an admissible block at level $l$ of an $\mathcal{H}_h$-matrix, it admits the following low-rank representation

\begin{align*}
    \widetilde{H}^{\bkt{l}}_{I,J} = \widetilde{U}^{\bkt{l}}_I (\widetilde{V}^{\bkt{l}}_J)^*,
\end{align*}
where $\widetilde{U}^{\bkt{l}}_I \in \mathbb{R}^{\abs{I} \times p}$ and $\widetilde{V}^{\bkt{l}}_J \in \mathbb{R}^{\abs{J} \times p}$. We assume that $\widetilde{U}^{\bkt{l}}_I$ has columns orthonormal to precision $u$. Our goal is to represent the admissible blocks of $\mathcal{H}_h$-matrix in a precision that is typically lower than the working precision $u$, without degrading the overall approximation quality. In the mixed precision representation of an $\mathcal{H}_h$-matrix, $\widehat{H}$, we store the low-rank factors $\widetilde{U}^{\bkt{l}}_I$ and $\widetilde{V}^{\bkt{l}}_J$ corresponding to an admissible block in precision $u_{i,j}^{(l)}$, while the diagonal blocks at level $L$ are stored in working precision $u$. As a result, a mixed precision scheme may achieve significant storage gains compared to the uniform precision scheme. However, the precisions $u_{i,j}^{(l)}$ must be chosen carefully to ensure that the error in the mixed precision representation remains controlled. 

We propose the following theorem, which provides guidance on selecting the right precision associated with each admissible block in order to maintain the global error bound at a satisfactory level.

\begin{theorem} \label{thm:amp_throrem}
\emph{(Global error in mixed precision $\mathcal{H}_h$ representation)}.
Let $\widetilde{H}$ be an $\mathcal{H}_h \bkt{p,\epsilon}$-matrix format of the matrix $H$, and let $\widehat{H}$ denote its mixed precision hierarchical matrix representation, in which the low-rank factors corresponding to the admissible block $I \times J$ at level $l$ are stored in precision $u_{i,j}^{\bkt{l}}$. Define $\xi_{i,j}^{\bkt{l}} := {\magn{\widetilde{H}_{I,J}^{\bkt{l}}}}/{\magn{\widetilde{H}}}$, where $1 \leq i \neq j \leq 2^{dl}$ and $1 \leq l \leq L$. If the precision $u_{i,j}^{\bkt{l}}$ satisfies $u_{i,j}^{\bkt{l}} \leq \dfrac{\epsilon}{2^{dl/2} \xi_{i,j}^{\bkt{l}}}$, the global error bound for $\mathcal{H}_h$-matrices is given by
\begin{align} \label{eq:mp_thm}
 \magn{H - \widehat{H}} \lesssim \bkt{\sqrt{\bkt{\ell C^{\prime} + C^{\prime\prime} + \bkt{L-\ell} C^{\prime\prime\prime}}} 2 + 1} \epsilon \magn{H},
\end{align}
where the constants $C^{\prime}$ and $C^{\prime\prime}$ depend on the underlying dimension $d$ and the admissibility parameter $\eta$, and $C^{\prime\prime\prime}$ depends only on $d$.
\end{theorem}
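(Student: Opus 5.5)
We split the error with the triangle inequality, $\magn{H - \widehat{H}} \leq \magn{H - \widetilde{H}} + \magn{\widetilde{H} - \widehat{H}}$. By \Cref{lemma:global_error} the first term is at most $\epsilon \magn{H}$, which accounts for the ``$+1$'' in \cref{eq:mp_thm}. The leaf-level diagonal blocks are kept in working precision $u$, so they contribute nothing to $\widetilde{H} - \widehat{H}$ beyond the negligible $\mathcal{O}(u)$ terms that $\lesssim$ absorbs. Since all blocks of the partition are disjoint, the squared Frobenius norm of the remaining error is a sum over admissible blocks:
\begin{align*}
\magn{\widetilde{H} - \widehat{H}}^2 = \sum_{l}\sum_{i}\sum_{j} \magn{\widetilde{H}^{(l)}_{I,J} - \widehat{H}^{(l)}_{I,J}}^2 .
\end{align*}

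For a single admissible block, $\widehat{H}^{(l)}_{I,J} = (\widetilde{U}^{(l)}_I + \Delta U)(\widetilde{V}^{(l)}_J + \Delta V)^*$ with $\abs{\Delta U} \leq u^{(l)}_{i,j}\abs{\widetilde{U}^{(l)}_I}$ and $\abs{\Delta V} \leq u^{(l)}_{i,j}\abs{\widetilde{V}^{(l)}_J}$. To first order the block error is $\Delta U (\widetilde{V}^{(l)}_J)^* + \widetilde{U}^{(l)}_I \Delta V^*$. Because $\widetilde{U}^{(l)}_I$ has orthonormal columns to precision $u$, we have
\begin{align*}
\magn{\widetilde{U}^{(l)}_I \Delta V^*} \lesssim \magn{\Delta V} \leq u^{(l)}_{i,j}\magn{\widetilde{V}^{(l)}_J} \approx u^{(l)}_{i,j}\magn{\widetilde{H}^{(l)}_{I,J}}
\end{align*}
and
\begin{align*}
\magn{\Delta U (\widetilde{V}^{(l)}_J)^*} \leq \magn{\Delta U}_2\magn{\widetilde{V}^{(l)}_J} \lesssim u^{(l)}_{i,j}\magn{\widetilde{H}^{(l)}_{I,J}},
\end{align*}
where we bound $\magn{\Delta U}_2$ by $u^{(l)}_{i,j}$ times $\magn{\widetilde{U}}_2 \approx 1$. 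Together these give the per-block bound $\magn{\widetilde{H}^{(l)}_{I,J} - \widehat{H}^{(l)}_{I,J}} \lesssim 2u^{(l)}_{i,j}\magn{\widetilde{H}^{(l)}_{I,J}} = 2u^{(l)}_{i,j}\xi^{(l)}_{i,j}\magn{\widetilde{H}}$. This is the source of the factor $2$.

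This step is the delicate one. Naively, $\magn{\Delta U}_2 \leq \magn{\Delta U} \leq u^{(l)}_{i,j}\sqrt{p}$, which would introduce a spurious $\sqrt{p}$ factor. To avoid it I would use the componentwise model and the fact that $\widetilde{U}^{(l)}_I$ has orthonormal columns, giving $\magn{\,\abs{\widetilde{U}}\,}_2 \lesssim 1$ up to a modest constant. If that constant cannot be removed, the honest fallback is to treat $\magn{\Delta U}_2 \lesssim u^{(l)}_{i,j}$, which is the usual first-order convention in this literature (cf.\ \cite{carson2025mixed}).

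Inserting the precision rule $u^{(l)}_{i,j}\xi^{(l)}_{i,j} \leq \epsilon/2^{dl/2}$, each block contributes at most $4\epsilon^2\magn{\widetilde{H}}^2/2^{dl}$ to the squared sum. At level $l$ there are $2^{dl}$ clusters $i$, and each cluster has a bounded number of admissible partners $j$:
\begin{itemize}
\item $C'$ partners at standard-admissibility levels $1 \leq l \leq \ell$, by \cref{eq:standard_IL_size};
\item $C''$ additional finer-neighbor partners at the switching level $\ell$, by \cref{eq:standard_N_size};
\item $C'''$ partners at weak-admissibility levels $\ell+1 \leq l \leq L$, by \cref{eq:weak_IL_size}.
\end{itemize}
The factor $2^{dl}$ from the number of clusters cancels the $2^{-dl}$ from the precision rule. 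Summing over levels therefore gives
\begin{align*}
\magn{\widetilde{H} - \widehat{H}}^2 \lesssim 4\epsilon^2\magn{\widetilde{H}}^2\bigl(\ell C' + C'' + (L-\ell)C'''\bigr).
\end{align*}

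Finally, $\magn{\widetilde{H}} \leq (1+\epsilon)\magn{H}$ by \Cref{lemma:global_error}, and the extra $\epsilon^2$ term is dropped under $\lesssim$. Taking square roots and combining with the first term yields \cref{eq:mp_thm}.
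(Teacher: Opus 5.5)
Your skeleton matches the paper's proof: the triangle inequality with \Cref{lemma:global_error}, the block-disjoint splitting of $\magn{\widetilde{H}-\widehat{H}}^2$, a per-block bound $\lesssim 2u_{i,j}^{(l)}\xi_{i,j}^{(l)}\magn{\widetilde{H}}$, the precision rule cancelling the $2^{dl}$ clusters per level, and the count with $C'$, $C''$, $C'''$. The difference is that the paper does not derive the per-block bound. It imports it from Lemma~2.2 of \cite{amestoy2023mixed}, namely $\magn{\widetilde{H}^{(l)}_{I,J}-\widehat{H}^{(l)}_{I,J}}\leq(2+\sqrt{p}\,u_{i,j}^{(l)})u_{i,j}^{(l)}\magn{\widetilde{H}^{(l)}_{I,J}}$. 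Your own derivation has a genuine hole exactly where you suspected. The term $\widetilde{U}\Delta V^*$ is fine. For $\Delta U(\widetilde{V}^{(l)}_J)^*$, however, you go through $\magn{\Delta U}_2$, and the componentwise model only gives $\magn{\Delta U}_2\leq u\,\magn{\,\abs{\widetilde{U}}\,}_2$ (writing $u$ for $u_{i,j}^{(l)}$). Here $\magn{\,\abs{\widetilde{U}}\,}_2$ can equal $\sqrt{p}$ for an orthonormal $\widetilde{U}$: a normalized $p\times p$ Hadamard matrix has $\abs{\widetilde{U}}=p^{-1/2}\mathbf{1}\mathbf{1}^T$. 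That is neither a ``modest constant'' nor a higher-order term that $\lesssim$ may drop. Under only your hypotheses (orthonormal $\widetilde{U}$, arbitrary $\widetilde{V}$), this bound is attained. Take $\Delta U=u\abs{\widetilde{U}}$, which the model allows, and $\widetilde{V}^*=wz^T$ with $w=p^{-1/2}\mathbf{1}$; then $\magn{\Delta U\widetilde{V}^*}=\sqrt{p}\,u\magn{\widetilde{H}_{I,J}}$. So your ``fallback'' assumes something false, and the argument as written yields $1+\sqrt{p}$ in place of the factor $2$.

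The missing ingredient is the structure of the truncated-SVD factors the paper uses: $\widetilde{V}=V\Sigma$ with $V$ having orthonormal columns, so $\widetilde{V}$ has orthogonal columns. With this you never need $\magn{\Delta U}_2$. Since $\abs{\Delta U\Sigma}\leq u\abs{\widetilde{U}}\Sigma$ entrywise, and the Frobenius norm depends only on entry magnitudes, you get $\magn{\Delta U\widetilde{V}^*}=\magn{\Delta U\Sigma}\leq u\magn{\,\abs{\widetilde{U}}\Sigma\,}=u\magn{\widetilde{U}\Sigma}=u\magn{\Sigma}=u\magn{\widetilde{H}_{I,J}}$. The $\sqrt{p}$ then appears legitimately only in the second-order cross term, $\magn{\Delta U\Delta V^*}\leq\magn{\Delta U}\,\magn{\Delta V}\leq\sqrt{p}\,u^2\magn{\widetilde{H}_{I,J}}$. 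This is exactly the form of the cited lemma. With this replacement, or by citing that lemma as the paper does, the rest of your proof goes through and coincides with the paper's.
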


\begin{proof}
In the $\widehat{H}$ representation, an admissible block $I \times J$ at level $l$ is stored in precision $u_{i,j}^{\bkt{l}}$. We consider that the leaf-level diagonal blocks $\widehat{H}^{\bkt{L}}_{I,I}$ and $\widetilde{H}^{\bkt{L}}_{I,I}$ are identical, i.e., $\widehat{H}^{\bkt{L}}_{I,I} = \widetilde{H}^{\bkt{L}}_{I,I}$. 

From Lemma 2.2 in \cite{amestoy2023mixed}, together with the definition of $\xi_{i,j}^{\bkt{l}}$, it follows that
\begin{align} \label{eq:need_mvp}
    \magn{\widetilde{H}^{(l)}_{I,J} - \widehat{H}^{(l)}_{I,J}} \leq (2 + \sqrt{p} u_{i,j}^{\bkt{l}})u_{i,j}^{\bkt{l}} \magn{\widetilde{H}^{(l)}_{I,J}} = \xi_{i,j}^{\bkt{l}} (2 + \sqrt{p} u_{i,j}^{\bkt{l}})u_{i,j}^{\bkt{l}} \magn{\widetilde{H}}.
\end{align}

\begin{align} \label{eq:amp_eq0}
    \magn{\widetilde{H}^{(l)}_{I,J} - \widehat{H}^{(l)}_{I,J}}^2 \leq \bkt{\xi_{i,j}^{\bkt{l}}}^2 \bkt{2 + \sqrt{p} u_{i,j}^{\bkt{l}}}^2 \bkt{u_{i,j}^{\bkt{l}}}^2 \magn{\widetilde{H}}^2 \lesssim 4 \bkt{\xi_{i,j}^{\bkt{l}} u_{i,j}^{\bkt{l}}}^2 \magn{\widetilde{H}}^2, 
\end{align}
where $l = 1,2, \dots ,L$.

\begin{align*} 
 \resizebox{\linewidth}{!}{$
\begin{aligned} 
   \magn{\widetilde{H} - \widehat{H}}^2 &=   \dsum_{l=1}^{\ell} \dsum_{i=1}^{2^{dl}} \dsum_{\mathcal{C}_j^{\bkt{l}} \in \mathcal{IL}_s (\mathcal{C}_i^{\bkt{l}})} \magn{\widetilde{H}^{(l)}_{I,J} - \widehat{H}^{(l)}_{I,J}}^2 + \dsum_{i=1}^{2^{d \ell}} \dsum_{\mathcal{C}_j^{\bkt{\ell}} \in \mathcal{N}_s (\mathcal{C}_i^{\bkt{\ell}})} \magn{\widetilde{H}^{(\ell)}_{I,J} - \widehat{H}^{(\ell)}_{I,J}}^2 \\
   &\quad \quad + \dsum_{l=\ell+1}^{L} \dsum_{i=1}^{2^{dl}} \dsum_{\mathcal{C}_j^{\bkt{l}} \in \mathcal{IL}_w (\mathcal{C}_i^{\bkt{l}})} \magn{\widetilde{H}^{(l)}_{I,J} - \widehat{H}^{(l)}_{I,J}}^2 \\ 
   & \lesssim \quad \dsum_{l=1}^{\ell} \dsum_{i=1}^{2^{dl}} \dsum_{\mathcal{C}_j^{\bkt{l}} \in \mathcal{IL}_s (\mathcal{C}_i^{\bkt{l}})} 4 \bkt{\xi_{i,j}^{\bkt{l}} u_{i,j}^{\bkt{l}}}^2 \magn{\widetilde{H}}^2 + \dsum_{i=1}^{2^{d \ell}} \dsum_{\mathcal{C}_j^{\bkt{\ell}} \in \mathcal{N}_s (\mathcal{C}_i^{\bkt{\ell}})}  4 \bkt{\xi_{i,j}^{\bkt{\ell}} u_{i,j}^{\bkt{\ell}}}^2 \magn{\widetilde{H}}^2 \\ 
   &\quad \quad + \dsum_{l=\ell+1}^{L} \dsum_{i=1}^{2^{dl}} \dsum_{\mathcal{C}_j^{\bkt{l}} \in \mathcal{IL}_w (\mathcal{C}_i^{\bkt{l}})}  4 \bkt{\xi_{i,j}^{\bkt{l}} u_{i,j}^{\bkt{l}}}^2 \magn{\widetilde{H}}^2 \quad \bkt{\text{By using \cref{eq:amp_eq0}}}
\end{aligned}
$}
\end{align*}

Now, setting

\begin{align} \label{eq:adaptive_prec_choice}
   u_{i,j}^{\bkt{l}} \leq \dfrac{\epsilon}{2^{dl/2} \xi_{i,j}^{\bkt{l}}} \implies \xi_{i,j}^{\bkt{l}} u_{i,j}^{\bkt{l}} \leq \dfrac{\epsilon}{2^{dl/2}},
\end{align}

the above expression becomes 

\begin{align*}
\begin{aligned}
    \magn{\widetilde{H} - \widehat{H}}^2 & \lesssim \quad \dsum_{l=1}^{\ell} \dsum_{i=1}^{2^{dl}} \dsum_{\mathcal{C}_j^{\bkt{l}} \in \mathcal{IL}_s (\mathcal{C}_i^{\bkt{l}})} \dfrac{4\epsilon^2}{2^{dl}} \magn{\widetilde{H}}^2 + \dsum_{i=1}^{2^{d \ell}} \dsum_{\mathcal{C}_j^{\bkt{\ell}} \in \mathcal{N}_s (\mathcal{C}_i^{\bkt{\ell}})}  \dfrac{4 \epsilon^2}{2^{d \ell}} \magn{\widetilde{H}}^2 \\
    &\quad \quad + \dsum_{l=\ell+1}^{L} \dsum_{i=1}^{2^{dl}} \dsum_{\mathcal{C}_j^{\bkt{l}} \in \mathcal{IL}_w (\mathcal{C}_i^{\bkt{l}})}  \dfrac{4 \epsilon^2}{2^{dl}} \magn{\widetilde{H}}^2\\ 
    & \leq \quad \dsum_{l=1}^{\ell} \dsum_{i=1}^{2^{dl}} C_{sp}' \dfrac{4\epsilon^2}{2^{dl}} \magn{\widetilde{H}}^2 + \dsum_{i=1}^{2^{d \ell}} C_{sp}'' \dfrac{4 \epsilon^2}{2^{d \ell}} \magn{\widetilde{H}}^2 + \dsum_{l=\ell+1}^{L} \dsum_{i=1}^{2^{dl}} C_{sp}'''  \dfrac{4 \epsilon^2}{2^{dl}} \magn{\widetilde{H}}^2\\ 
    & \leq \quad \bkt{\ell C^{\prime} + C^{\prime\prime} + \bkt{L-\ell} C^{\prime\prime\prime}} 4 \epsilon^2 \magn{\widetilde{H}}^2, 
\end{aligned}
\end{align*}
where the sparsity constants \cite{weak_hackbusch2004} (maximum size of admissible pairs) $C_{sp}'$, $C_{sp}''$ and $C_{sp}'''$ for a balanced tree are bounded above as follows:
\begin{enumerate}
    \item $C_{sp}' \leq C^{\prime} = (2^d-1) (1 + {2\sqrt{d}}/{\eta})^d$, \quad (see \cref{eq:standard_IL_size})
    \item $C_{sp}'' \leq C^{\prime\prime} = (1 + {2\sqrt{d}}/{\eta})^d - 1$, \quad (see \cref{eq:standard_N_size})
    \item $C_{sp}''' \leq C^{\prime\prime\prime} = (2^d-1)$, \quad (see \cref{eq:weak_IL_size}).
\end{enumerate}

Therefore, we get
\begin{align} \label{eq:amp_eq}
     \magn{\widetilde{H} - \widehat{H}} \lesssim \sqrt{\bkt{\ell C^{\prime} + C^{\prime\prime} + \bkt{L-\ell} C^{\prime\prime\prime}}} 2 \epsilon \magn{H}.
\end{align}

Applying \cref{lemma:global_error} and \cref{eq:amp_eq}, together with the triangle inequality, readily yields the result.
\end{proof}

\subsubsection*{Special cases}
The global error bound for adaptive mixed precision $\mathcal{H}_s$ and HODLR matrices can be obtained as follows.

   \begin{itemize}
       \item By setting $\ell = L$ in \cref{eq:mp_thm}, one obtains the global error bound for adaptive mixed precision $\mathcal{H}_s$-matrices:
       \begin{align} \label{eq:err2_bound}
           \magn{H - \widehat{H}} \lesssim (\sqrt{\bkt{L C^{\prime} + C^{\prime \prime}}} 2 + 1) \epsilon \magn{H}
       \end{align}
        \item By setting $\ell = 0$ in \cref{eq:mp_thm}, one obtains the global error bound for adaptive mixed precision HODLR matrices (in this case, $C^{\prime \prime}_{sp} = 0$, so $C^{\prime \prime}$ is absent):
       \begin{align} 
           \magn{H - \widehat{H}} \lesssim (\sqrt{L C^{\prime \prime \prime}} 2 + 1) \epsilon \magn{H}
       \end{align}
   \end{itemize}

\begin{remark}
The paper~\cite{carson2025mixed} gives a global error bound for HODLR matrices built using the balanced binary tree. In contrast, this work considers a more general and structurally more complex hierarchical representation based on a balanced $2^d$-tree. Furthermore, in the mixed precision HODLR representation of \cite{carson2025mixed}, the precision is fixed level-wise throughout the hierarchy. In this work, no such constraint is imposed. Instead, we assign a precision $u_{i,j}^{(l)}$ to an admissible block $I \times J$ at level $l$, i.e., at the same level, the precision may differ for different admissible blocks (see \Cref{fig:prec_color}). This block-wise precision assignment provides greater flexibility and enables the more effective use of lower precisions. Such flexibility is particularly important since the $\mathcal{H}_h$-matrices have a larger number of admissible blocks compared to the binary tree-based HODLR matrices.
\end{remark}

\subsection{Adaptive mixed precision \texorpdfstring{$\mathcal{H}_h$}{Hh}-matrix representation}
We present an adaptive mixed precision representation in \Cref{alg:algo2} based on \Cref{thm:amp_throrem}. 

\Cref{alg:algo2} takes as input the $\mathcal{H}_h$-matrix, the target accuracy $\epsilon$, the working precision $u$, and a set of available precisions $\mathcal{U} = \{ u_1, u_2, \dots, u_n \}$, satisfying $u \leq u_1 < u_2 < \cdots < u_n$. Since the rank of the admissible blocks is usually not known \emph{a priori}, the low-rank approximation is performed using the prescribed target accuracy $\epsilon$. We also assume that $u < \epsilon$, as the low-rank approximation is carried out in working precision. To satisfy the global error bound \cref{eq:mp_thm}, the low-rank factors corresponding to an admissible block $I \times J$ at level $l$ are stored adaptively in precision $u_{i,j}^{\bkt{l}}$, chosen as the lowest available precision $\Bar{u} \in u \cup \mathcal{U}$ such that

\begin{align} \label{eq:amp_prec_choice}
    \Bar{u} \leq \dfrac{\epsilon}{2^{dl/2} \xi_{i,j}^{\bkt{l}}}.
\end{align}
Finally, the algorithm outputs the adaptive mixed precision representation $\widehat{H}$. 

It should be noted that by leveraging memory accessor mechanisms, one can exploit a continuum of precision formats for storing data, rather than being limited to the floating-point formats available for computation (see \cite{mary2025approximate}, Chapter 12). This allows storing each block with exactly the right number of bits required to achieve the desired accuracy, leading to more efficient memory use and potentially reducing data movement costs further.

 \begin{algorithm}
 \small
	\caption{Adaptive mixed precision $\mathcal{H}_h$-matrix representation.}\label{alg:algo2}
	\begin{algorithmic}[1]
        \State \textbf{Input:} $\widetilde{H}$, $\epsilon$, $u$, $\mathcal{U} = \{ u_1, u_2, \dots, u_n \}$. \qquad \textbf{Output:} $\widehat{H}$.
\For{\texttt{$l=1:\ell$}} 
				\For{\texttt{$i=1:2^{dl}$}}
					\For{$\mathcal{C}^{\bkt{l}}_{j} \in$ $\mathcal{IL}_s (\mathcal{C}^{\bkt{l}}_{i})$}
                        \State $\xi^{\bkt{l}}_{i,j} \gets {\magn{\widetilde{H}^{(l)}_{I,J}}}/{\magn{\widetilde{H}}}$, \quad
                        $u^{\bkt{l}}_{i,j} \gets \displaystyle \max \Big \{\Bar{u} \leq \dfrac{\epsilon}{2^{{dl}/2} \xi^{\bkt{l}}_{i,j}} : \Bar{u} \in u \cup \mathcal{U} \Big \}$
                        \State $\widehat{H}^{\bkt{l}}_{I,J} \gets \widehat{U}^{\bkt{l}}_I (\widehat{V}^{\bkt{l}}_J)^*$ \Comment{Store the low-rank factors $\widehat{U}^{\bkt{l}}_I, \widehat{V}^{\bkt{l}}_J$ in precision $u^{\bkt{l}}_{i,j}$ and compute in working precision $u$.}
					\EndFor
				\EndFor
			\EndFor

          		\For{\texttt{$i=1:2^{d\ell}$}}
					\For{$\mathcal{C}^{\bkt{\ell}}_{j} \in$ $\mathcal{N}_s (\mathcal{C}^{\bkt{\ell}}_{i})$}
                        \State $\xi^{\bkt{\ell}}_{i,j} \gets {\magn{\widetilde{H}^{(\ell)}_{I,J}}}/{\magn{\widetilde{H}}}$, \quad
                        $u^{\bkt{\ell}}_{i,j} \gets \displaystyle \max \Big \{\Bar{u} \leq \dfrac{\epsilon}{2^{{d\ell}/2} \xi^{\bkt{\ell}}_{i,j}} : \Bar{u} \in u \cup \mathcal{U} \Big \}$
                        \State $\widehat{H}^{\bkt{\ell}}_{I,J} \gets \widehat{U}^{\bkt{\ell}}_I (\widehat{V}^{\bkt{\ell}}_J)^*$ \Comment{Store the low-rank factors $\widehat{U}^{\bkt{\ell}}_I, \widehat{V}^{\bkt{\ell}}_J$ in precision $u^{\bkt{\ell}}_{i,j}$ and compute in working precision $u$.}
					\EndFor
				\EndFor  

            \For{\texttt{$l=\ell+1:L$}} 
				\For{\texttt{$i=1:2^{dl}$}}
					\For{$\mathcal{C}^{\bkt{l}}_{j} \in$ $\mathcal{IL}_w (\mathcal{C}^{\bkt{l}}_{i})$}
                        \State $\xi^{\bkt{l}}_{i,j} \gets {\magn{\widetilde{H}^{(l)}_{I,J}}}/{\magn{\widetilde{H}}}$, \quad
                        $u^{\bkt{l}}_{i,j} \gets \displaystyle \max \Big \{\Bar{u} \leq \dfrac{\epsilon}{2^{{dl}/2} \xi^{\bkt{l}}_{i,j}} : \Bar{u} \in u \cup \mathcal{U} \Big \}$
                        \State $\widehat{H}^{\bkt{l}}_{I,J} \gets \widehat{U}^{\bkt{l}}_I (\widehat{V}^{\bkt{l}}_J)^*$ \Comment{Store the low-rank factors $\widehat{U}^{\bkt{l}}_I, \widehat{V}^{\bkt{l}}_J$ in precision $u^{\bkt{l}}_{i,j}$ and compute in working precision $u$.}
					\EndFor
				\EndFor
			\EndFor

            \For{\texttt{$i=1:2^{dL}$}}
                \State $\widehat{H}^{\bkt{L}}_{I,I} \gets \widetilde{H}^{\bkt{L}}_{I,I}$
                \Comment{Store leaf level dense diagonal block $\widehat{H}^{\bkt{L}}_{I,I}$ in working precision $u$.}
            \EndFor
	\end{algorithmic}
\end{algorithm}

\subsubsection*{Special cases}
   From \Cref{alg:algo2}, adaptive mixed precision $\mathcal{H}_s$ and HODLR representations can be obtained easily as follows:
   \begin{itemize}
       \item Setting $\ell = L$ in \Cref{alg:algo2} generates an adaptive mixed precision $\mathcal{H}_s$-matrix representation (see lines $2-15$ \& lines $24-26$ of \Cref{alg:algo2}).

       It is noteworthy that a low-rank representation that is not beneficial in uniform precision may be beneficial in the mixed precision case. For example, consider a $125 \times 125$ neighbor block whose numerical rank is $64$ for $\epsilon = 10^{-10}$. Storing the low-rank factors in fp64 is not beneficial, since $64 \times \bkt{125+125} > 125 \times 125$. However, storing the factors in lower precision, such as fp32, becomes beneficial because $64 \times \bkt{125+125} \times 0.5 < 125 \times 125$. Thus, a matrix that is not sufficiently low-rank in a uniform precision may become so in a lower or mixed precision arithmetic. Consequently, we compress the neighbor blocks (see lines $10-15$ of \Cref{alg:algo2}) whenever the low-rank representation is beneficial; otherwise, we store them in their original dense form. Clearly, this does not violate the global error bound \cref{eq:err2_bound}, which is further confirmed by the numerical results presented in \Cref{subsec:global_error}.

       
       
       \item Setting $\ell = 0$ in \Cref{alg:algo2} generates an adaptive mixed precision HODLR representation (see lines $16-26$ of \Cref{alg:algo2}). 
   \end{itemize}

\begin{remark}
    Note that as the tree depth $L$ increases, the size of the full-rank diagonal blocks decreases. Hence, if $\magn{\widetilde{H}_{I,I}^{(L)}} \ll \magn{H}$, it may be possible to store these diagonal blocks in lower precision. However, we store them in the working precision $u$, assuming it is the highest available precision.
\end{remark}

One important component of \Cref{alg:algo2} is the computation of Frobenius norms, which incurs a computational cost of $\mathcal{O}(N^2)$. If $H$ is a small or sparse matrix, the ratios $\xi^{\bkt{l}}_{i,j} = {\magn{\widetilde{H}^{(l)}_{I,J}}}/{\magn{\widetilde{H}}}$ can be computed directly as ${\magn{H^{(l)}_{I,J}}}/{\magn{H}}$, without $\widetilde{H}$. However, this becomes expensive when $H$ is large and dense. To address this, we first construct the $\widetilde{H}$ representation and compute the Frobenius norms using its low-rank factors, storing them for reuse while forming $\widehat{H}$. A detailed procedure for fast Frobenius norm computation using SVD factors is presented in \Cref{alg:Frob_norm}. A similar approach can be followed for other low-rank approximation techniques, such as truncated QR decomposition. In practical applications, $\widehat{H}$ can be formed once and reused multiple times in subsequent computations, resulting in significant computational savings.

We perform a numerical experiment using \Cref{alg:algo2} to demonstrate the suitable precision assigned to each block for different $\epsilon$ (see \Cref{fig:prec_color}). We consider $N = 6400$ points uniformly distributed in the domain $[-1,1]^2$ with tree depth $L = 4$. The dense kernel matrix $H$ is constructed using the kernel function $\log(r)$, as defined in \cref{eq:kernel_mat}. The switching level is chosen as $\ell = L-1 = 3$. The working precision is $u=\mathrm{fp64}$, and the set of available precisions is $\mathcal{U}=\{\mathrm{fp64}, \mathrm{fp32}, \mathrm{fp16}, \mathrm{bf16}, \mathrm{q43}\}$ (see \Cref{tab:precisions}). \textbf{Zoomed-in views} in \Cref{fig:hmat0_prec_color} and \Cref{fig:hmat1_prec_color} illustrate the block-wise precision selected by the adaptive mixed precision $\mathcal{H}_h$ representation for $\epsilon=10^{-4}$ and $\epsilon=10^{-2}$, respectively.

    \begin{figure}[H]
        \centering
        \subfloat[\scriptsize $\epsilon = 10^{-4}$.]{
        \includegraphics[scale=.45]{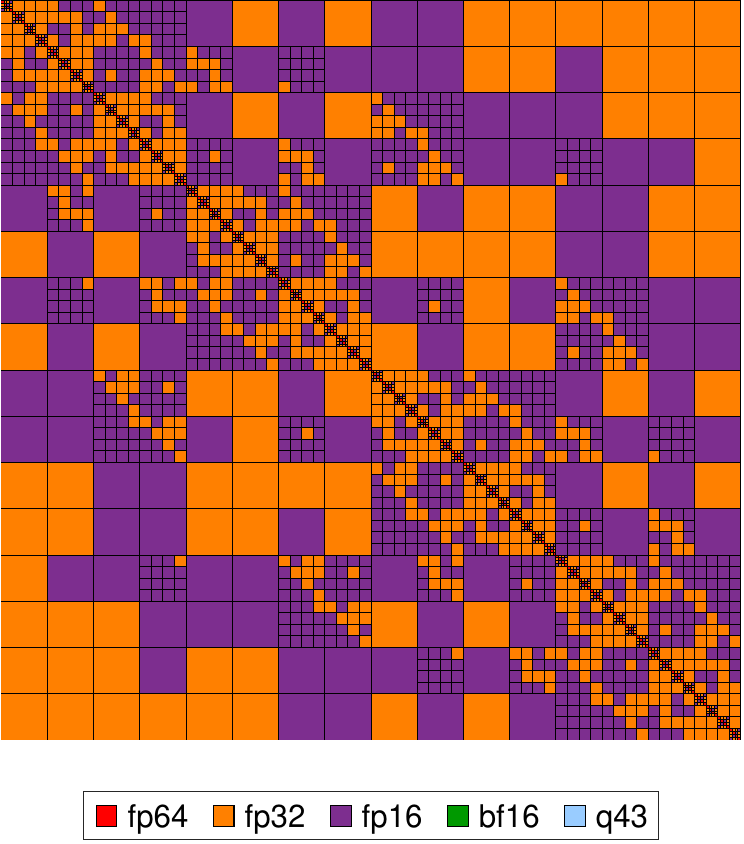}
        \label{fig:hmat0_prec_color}
        }
        \subfloat[\scriptsize $\epsilon = 10^{-2}$.]{
        \includegraphics[scale=.45]{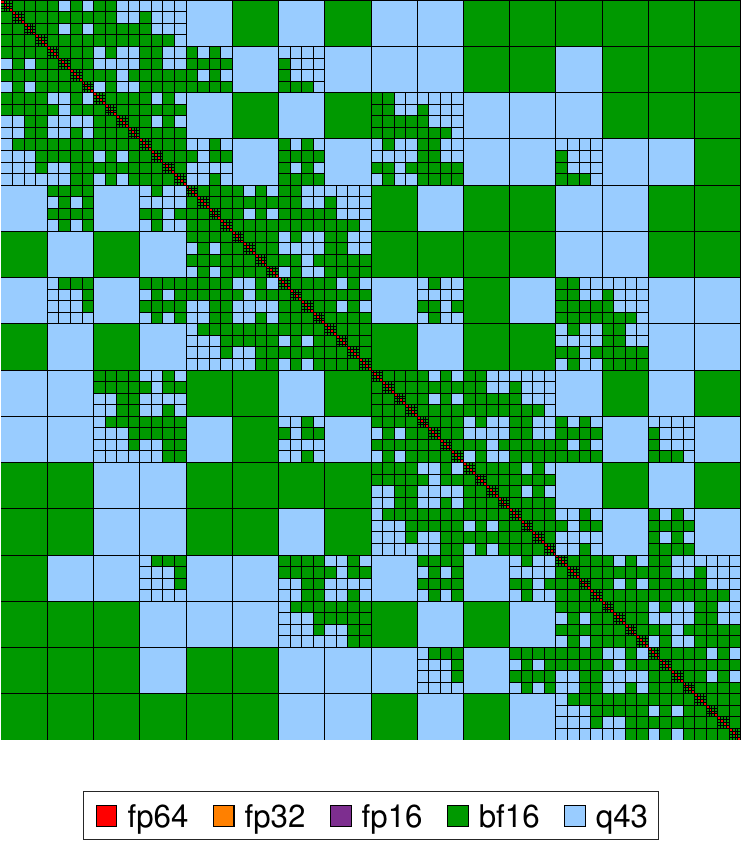}
        \label{fig:hmat1_prec_color}
        }
        \caption{Colors indicate the precision used in each block of $\widehat{H}$ for different choices of $\epsilon$.}
        \label{fig:prec_color}
    \end{figure}

\subsection{Adaptive mixed precision \texorpdfstring{$\mathcal{H}_h$}{Hh}-matrix-vector product}
The adaptive mixed precision $\mathcal{H}_h$-matrices can be applied to column vectors to perform fast matrix-vector products as described in \Cref{alg:algo3}. Its complexity is $\mathcal{O}\bkt{p N \log{N}}$, where $p$ denotes the maximum rank of the admissible blocks.

We now investigate how the working precision $u$ in \Cref{alg:algo3} (this $u$ may differ from the $u$ used in \Cref{alg:algo2}) should be chosen relative to the prescribed accuracy $\epsilon$ so that the finite precision error in the computed matrix-vector product does not exceed the approximation error. We propose the following theorem, which provides a backward error bound for adaptive mixed precision $\mathcal{H}_h$-matrix-vector product.

\begin{theorem} \label{thm:mvp}
    Let $\widehat{H}$ denote its adaptive mixed precision $\mathcal{H}_h$-matrix representation obtained by \Cref{alg:algo2}. If $b = \widehat{H} x$ is computed using \Cref{alg:algo3} with working precision $u \lesssim \epsilon$, the computed result $\widehat{b}$ satisfies
    \begin{align*}
        \widehat{b} = \emph{fl} (\widehat{H}x) = \bkt{H + \Delta H}, \quad \magn{\Delta H} \leq   2(\sqrt{\bkt{\ell C^{\prime} + C^{\prime\prime} + \bkt{L-\ell} C^{\prime\prime\prime}}} 2 + 1) \epsilon \magn{H},
    \end{align*}
    where the constants $C^{\prime}$, $C^{\prime\prime}$, and $C^{\prime\prime\prime}$ are given in \Cref{thm:amp_throrem}.
\end{theorem}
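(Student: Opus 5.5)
We will write the computed product as $\widehat{b} = (\widehat{H} + \Delta_{\mathrm{fl}})x$, where $\Delta_{\mathrm{fl}}$ is a backward perturbation produced by the floating-point arithmetic of \Cref{alg:algo3}. Then we split
\[
\Delta H = (\widehat{H} - H) + \Delta_{\mathrm{fl}} .
\]
The first term is controlled directly by \Cref{thm:amp_throrem}. All the work is therefore in showing $\magn{\Delta_{\mathrm{fl}}} \lesssim (\sqrt{\ell C^{\prime} + C^{\prime\prime} + (L-\ell)C^{\prime\prime\prime}}\,2+1)\epsilon\magn{H}$. Adding the two bounds then gives the factor $2$ in the statement.

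\textbf{Blockwise errors.} \Cref{alg:algo3} computes $b$ blockwise. For each admissible block it forms $\widehat{U}_I(\widehat{V}_J^*x_J)$ in precision $u$, with the factors converted up from $u_{i,j}^{(l)}$ exactly. For each leaf diagonal block it forms $\widehat{H}_{I,I}x_I$. It then sums the block contributions into $b_I$.

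1. For a dense diagonal block, the standard matrix--vector backward error result \cite{higham2002accuracy} gives $\mathrm{fl}(\widehat{H}_{I,I}x_I) = (\widehat{H}_{I,I}+E_{I,I})x_I$ with $\magn{E_{I,I}} \le \gamma_{n}\magn{\widehat{H}_{I,I}}$.
2. For a low-rank block, the two successive products give $(\widehat{U}_I+\delta U)(\widehat{V}_J+\delta V)^*x_J$. Since $\widehat{U}_I$ has orthonormal columns up to $O(u_{i,j}^{(l)})$, this yields a perturbation $E_{I,J}$ with $\magn{E_{I,J}} \lesssim c\,\gamma_{p+|J|}\magn{\widehat{H}_{I,J}}$, as in the analysis behind \cite[Lemma 2.2]{amestoy2023mixed} and \cite{carson2025mixed}.
3. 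The final accumulation of the block contributions into each row block adds at most $O(L\,C)$ terms. Its rounding is absorbed into each $E_{I,J}$ as an extra relative factor $\gamma_{m}$, with $m$ the number of summands; this is again first order in $u$.

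\textbf{Summing in the Frobenius norm.} Squaring and summing over blocks, we have $\sum_{I,J}\magn{\widehat{H}_{I,J}}^2 = \magn{\widehat{H}}^2$ because the blocks partition the matrix. This gives $\magn{\Delta_{\mathrm{fl}}} \lesssim c_N u\magn{\widehat{H}}$. Moreover $\magn{\widehat{H}} \le \magn{H} + \magn{H-\widehat{H}} \lesssim \magn{H}$ to first order. With $u\lesssim\epsilon$ and the dimension-dependent constants absorbed into $\lesssim$, it follows that $\magn{\Delta_{\mathrm{fl}}}$ is bounded by $\epsilon\magn{H}$ times the same constant as in \eqref{eq:mp_thm}. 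That constant is at least one, so this dominates the crude bound. Combining the two pieces finishes the proof.

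\textbf{Main obstacle.} The hard step is honestly fitting the floating-point factor $c_N u$ under $(\sqrt{\cdots}\,2+1)\epsilon$. The constants $\gamma_n$ depend on the block dimensions $|I|,|J|,p$, so the hypothesis $u \lesssim \epsilon$ must be read as hiding these polynomial factors, in the same spirit as in \cite{carson2025mixed}. I would make that interpretation explicit, i.e.\ $\gamma_{n_{\max}+p+LC}\le\epsilon$, rather than attempt a sharper count. A secondary point is the admissible blocks stored in low precision: the stored factors are exact in precision $u$ after conversion, so their storage error sits entirely in $\widehat{H}-H$ and must not be double counted in $\Delta_{\mathrm{fl}}$.
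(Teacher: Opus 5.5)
\textbf{Verdict.} Your proof is correct. Its skeleton is the paper's: write $\widehat b=(\widehat H+E)x$, split $\Delta H$ into the representation error $\widehat H-H$ (bounded by \Cref{thm:amp_throrem}) plus a floating-point perturbation, and get the factor $2$ from the triangle inequality. The routes differ in how the floating-point part is controlled.

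\textbf{Comparison with the paper.} The paper does no rounding analysis of \Cref{alg:algo3} itself. It applies \Cref{lemma:mvp} block by block to claim that each block's arithmetic error is dominated by that block's representation error, $\magn{H^{(l)}_{F_{I,J}}}\le\magn{H^{(l)}_{\mathscr{L}_{I,J}}}$. Hence $\magn{H_F}\le\magn{H-\widehat H}$ and $\magn{\Delta H}\le 2\magn{H-\widehat H}$.

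You instead carry out the standard backward error analysis directly: a two-stage product for the low-rank blocks, a dense product for the diagonal blocks, and the accumulation absorbed rowwise. This gives perturbations \emph{relative to each block's own norm}, so the Frobenius sum collapses to $\magn{\Delta_{\mathrm{fl}}}\lesssim c_N u\magn{\widehat H}$. You then compare with $\epsilon\magn{H}\le K\epsilon\magn{H}$, where $K=\sqrt{\ell C'+C''+(L-\ell)C'''}\,2+1\ge 1$.

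Your route is self-contained and somewhat more robust:
\begin{itemize}
\item The paper's blockwise domination needs each block's representation error to actually be of the size used in \Cref{lemma:mvp}, which is stated with $\magn{A-\widehat A_p}\approx\delta$.
\item That domination cannot hold for the leaf diagonal blocks. Their representation error is at most $u\magn{H^{(L)}_{I,I}}$ (possibly zero), while the error in $\mathrm{fl}(\widehat H^{(L)}_{I,I}x_I)$ is of order $n_{\max}u\magn{H^{(L)}_{I,I}}$.
\item Your argument treats all blocks uniformly and even yields the sharper bound $(K+1)\epsilon\magn{H}$.
\end{itemize}
What the paper's route buys is brevity and a per-block threshold on $u$.

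\textbf{Correction to your stated hypothesis.} In step 2 you correctly obtain $\gamma_{p+\abs{J}}$ for a low-rank block. However, $\abs{J}$ is the size of a level-$l$ cluster, roughly $N/2^{dl}$. This exceeds $n_{\max}$ for every $l<L$ and is about $N/2^d$ at level $1$.

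The condition you actually need is $c\,\gamma_m\le\epsilon$ with $m$ of order $\max_{l,j}\abs{J}+p+\ell C'+C''+(L-\ell)C'''$. That is essentially $u\lesssim\epsilon/N$, which is the refinement the paper states in its proof and observes numerically. Your condition $\gamma_{n_{\max}+p+LC}\le\epsilon$ does not cover step 2 at coarse levels.

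Two further points to make explicit:
\begin{itemize}
\item Say that $c$ absorbs the $\sqrt{p}$ arising from $\magn{\delta U}\le\gamma_p\magn{\widehat U_I}\approx\gamma_p\sqrt{p}$.
\item The corrected condition gives $u\le\epsilon/(2^{dl/2}\xi^{(l)}_{i,j})$, since $\xi^{(l)}_{i,j}\le 1$. Hence $u\le u^{(l)}_{i,j}$, which is what justifies your claim that the stored factors convert to working precision exactly. Under your weaker condition that claim was unsupported.
\end{itemize}
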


\begin{proof}
    The proof can be found in \Cref{sec:proof_mvp}.
\end{proof}

 \begin{algorithm}[H]
 \small
	\caption{$\widehat{H}$ accelerated \mvp, i.e., $b = \widehat{H} x$}\label{alg:algo3}
	\begin{algorithmic}[1]
        \State \textbf{Input:} $\widehat{H}$, $x \in \Rb^{N \times 1}$. \qquad \textbf{Output:} $b \in \Rb^{N \times 1}$.
        \State $b \gets 0 \in \Rb^{N \times 1}$
        
\For{\texttt{$l=1:\ell$}} 
				\For{\texttt{$i=1:2^{dl}$}}
					\For{$\mathcal{C}^{\bkt{l}}_{j} \in$ $\mathcal{IL}_s (\mathcal{C}^{\bkt{l}}_{i})$}
                        \State $b^{\bkt{l}}_I \gets b^{\bkt{l}}_I + \widehat{U}^{\bkt{l}}_I (\widehat{V}^{\bkt{l}}_J)^* x^{\bkt{l}}_J$ \Comment{Compute in working precision $u$.}
					\EndFor
				\EndFor
			\EndFor

          		\For{\texttt{$i=1:2^{d\ell}$}}
					\For{$\mathcal{C}^{\bkt{\ell}}_{j} \in$ $\mathcal{N}_s (\mathcal{C}^{\bkt{\ell}}_{i})$}
                        \State $b^{\bkt{\ell}}_I \gets b^{\bkt{\ell}}_I + \widehat{U}^{\bkt{\ell}}_I (\widehat{V}^{\bkt{\ell}}_J)^* x^{\bkt{\ell}}_J$ \Comment{Compute in working precision $u$.}
					\EndFor
				\EndFor  

            \For{\texttt{$l=\ell+1:L$}} 
				\For{\texttt{$i=1:2^{dl}$}}
					\For{$\mathcal{C}^{\bkt{l}}_{j} \in$ $\mathcal{IL}_w (\mathcal{C}^{\bkt{l}}_{i})$}
						\State $b^{\bkt{l}}_I \gets b^{\bkt{l}}_I + \widehat{U}^{\bkt{l}}_I (\widehat{V}^{\bkt{l}}_J)^* x^{\bkt{l}}_J$ \Comment{Compute in working precision $u$.}
					\EndFor
				\EndFor
			\EndFor

            \For{\texttt{$i=1:2^{dL}$}}
                \State $b^{\bkt{L}}_I \gets b^{\bkt{L}}_I + \widehat{H}^{\bkt{L}}_{I,I} x^{\bkt{L}}_I$ \Comment{Compute in working precision $u$.}
            \EndFor
	\end{algorithmic}
\end{algorithm}

\section{Numerical results} \label{sec:num_results}
This section presents numerical experiments involving kernel matrices commonly encountered in BEM and kernel methods to validate the theoretical results and to evaluate storage gains in the adaptive mixed precision representation. We further examine the backward errors of the matrix-vector product.

Let $\{x_i \}_{i=1}^N$ denote the locations of $N$ particles (or points), where $x_i \in \Rb^{d}$. We consider two distinct sets of particles: 

\begin{align*}
P_{\subsquare} &:\ \text{consisting of $N$ particles uniformly distributed within the hypercube } [-1,1]^d.\\
P_{\subcircle} &:\ \text{consisting of $N$ particles distributed on the surface of the unit hypersphere}.
\end{align*}

In each figure associated with an experiment, the particle distributions considered are indicated within parentheses at the end.
The numerical experiments are conducted in both two and three dimensions $(d = 2,3)$, with the admissibility parameter $\eta = \sqrt{d}$. However, it is worth noting that our publicly available code can handle any values of $d$ and $\eta$. \textbf{The numerical results associated with the particle set $P_{\subcircle}$ are presented in \Cref{sec:circ_particle_dist}}.

In \textbf{two dimensions}, we consider the kernel function

\begin{itemize}
    \item $f\bkt{r} = \log \bkt{r}$, ($2$D single-layer Laplacian)
\end{itemize}

In \textbf{three dimensions}, we consider the following kernel functions
$$
\begin{tabular}{ll}
$\bullet\; f(r) = {1}/{r}$, (3D single-layer Laplacian)
& $\bullet\; f(r) = {1}/{r^2}$ \\[8pt]
$\bullet\; f(r) = \exp (-{r^2}/{2})$, (Gaussian) (see \Cref{sec:Gaussian_kernel})
& $\bullet\; f(r) = \exp(-r)$, (Matérn)
\end{tabular}
$$
Here, $r$ denotes the Euclidean distance between two points, i.e., $r = \magn{x_i - x_j}_2$.

The $(i,j)^{th}$ entry of the kernel matrix $H \in \Rb^{N \times N}$ is given by 

\begin{align} \label{eq:kernel_mat}
    H_{i,j} = f \bkt{\magn{x_i-x_j}_2} = f \bkt{r}.
\end{align} 
If $f$ is \emph{singular}, the diagonal entries are set to zero, i.e., $H_{i,i} = 0$.

The adaptive mixed precision $\mathcal{H}_h$-matrix representations of the kernel matrix $H$ are constructed based on \Cref{alg:algo2}. The admissible blocks are compressed using the truncated SVD with a prescribed target accuracy $\epsilon$. The precisions are chosen from the set $\mathcal{U} = \{ \text{fp64}, \text{fp32}, \text{fp16}, \text{bf16}, \text{q43} \}$ (see \Cref{tab:precisions}). We simulate the lower precisions using the \texttt{chop} function of \cite{higham2019simulating}.

All the algorithms are implemented in MATLAB (R2024a), and the experiments are performed on an Intel Xeon Gold 2.6 GHz processor without parallelization.

\subsection{Global error in \texorpdfstring{$\widehat{H}$}{Hhat} representation} \label{subsec:global_error}
We illustrate how the relative global error, $\magn{H - \widehat{H}}/\magn{H}$, of adaptive mixed precision representations $\widehat{H}$ constructed using \Cref{alg:algo2} varies for different choices of the target accuracy $\epsilon$. The experiments are performed with different tree depths $L$, and we also plot the corresponding global relative error bound derived from \cref{eq:mp_thm}. Setting $\eta = \sqrt{d}$ simplifies \cref{eq:mp_thm} to \cref{eq:error_bound_2d} in two dimensions and to \cref{eq:error_bound_3d} in three dimensions, respectively. 

\begin{align} \label{eq:error_bound_2d}
         \magn{H - \widehat{H}} \lesssim (\sqrt{\bkt{27 \ell  + 8 + 3 \bkt{L-\ell}}} 2 + 1) \epsilon \magn{H},
\end{align}

\begin{align} \label{eq:error_bound_3d}
         \magn{H - \widehat{H}} \lesssim (\sqrt{\bkt{189 \ell + 26 + 7 \bkt{L-\ell}}} 2 + 1) \epsilon \magn{H}.
\end{align}

\Cref{fig:log_err_2d,fig:1r_err_3d,fig:1r2_err_3d,fig:gauss_err_3d,fig:mat_err_3d} illustrate the global relative error for varying values of $\epsilon$. The $x$-axis indicates the values of $\epsilon$, while the $y$-axis indicates the global relative error. We plot the global relative errors for the adaptive mixed precision representation of $\mathcal{H}_h$-matrices for the choices $\ell = L-1$ and $\ell = L$. The adaptive mixed precision schemes are denoted by ``amp'' in the legend. The entries ``eb1'' and ``eb2'' in the legend denote the relative error bounds corresponding to $\ell = L-1$ and $\ell = L$, respectively, obtained by substituting the respective values of $\ell$ into \cref{eq:error_bound_2d,eq:error_bound_3d}. It can be observed that $\text{eb1} < \text{eb2}$. For comparison, the global relative errors obtained in the uniform double precision (fp64) schemes are also shown. In $2$D (\Cref{fig:log_err_2d}) and $3$D (\Cref{fig:1r_err_3d,fig:1r2_err_3d,fig:gauss_err_3d,fig:mat_err_3d}), we set $N=25600$ and $N=64000$, respectively. The tree depths $L$ and particle distributions used are shown in each figure. 

All figures indicate that the error bound is satisfied for both $\ell = L-1$ and $\ell = L$, as expected.

    \begin{figure}
        \centering
        \subfloat[\scriptsize $L=3$ $(P_{\subsquare})$]{
        \includegraphics[scale=.35]{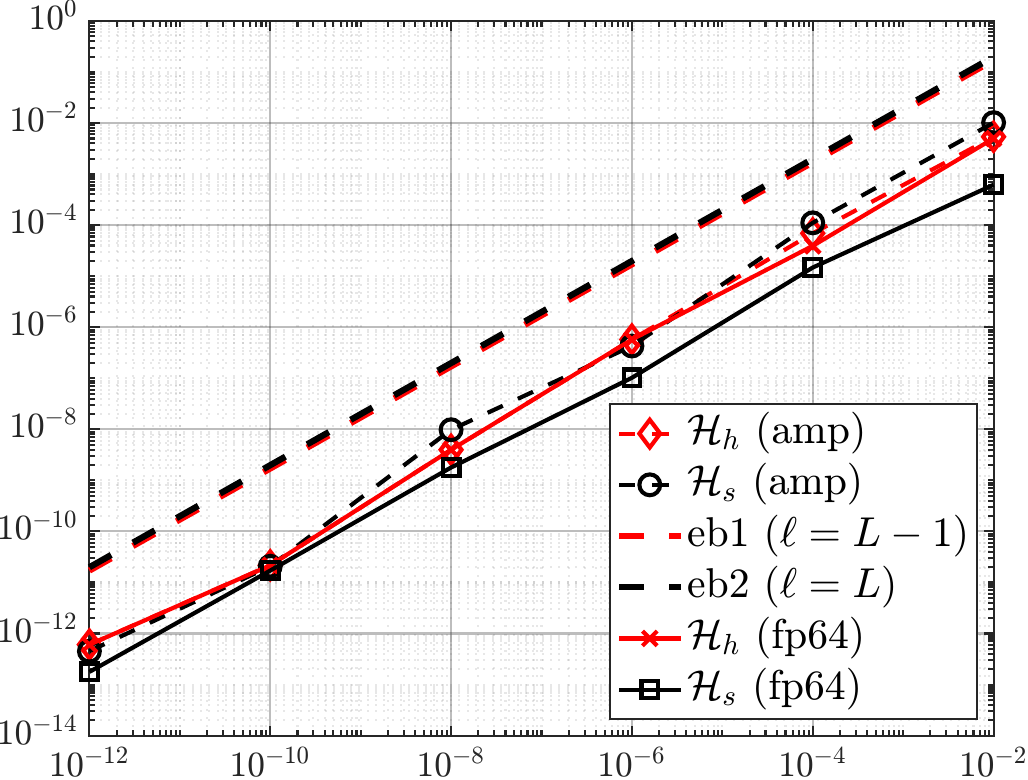}
        \label{fig:log_err_1_2d_p1}
        }
        \subfloat[\scriptsize $L=5$ $(P_{\subsquare})$]{
        \includegraphics[scale=.35]{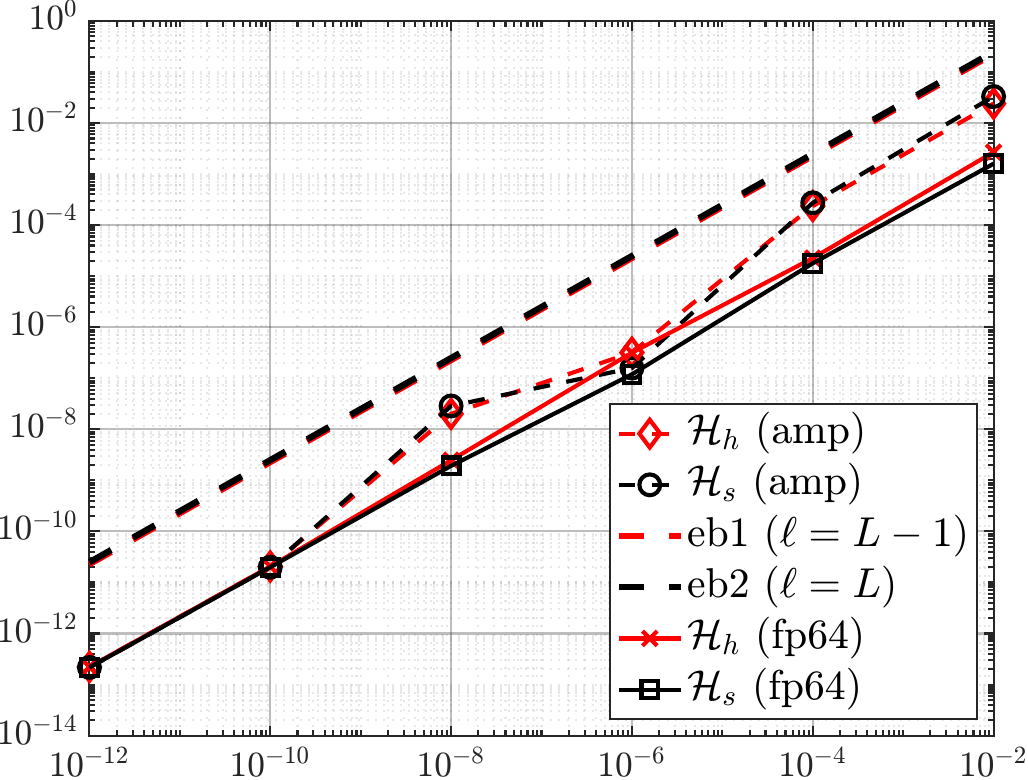}
        \label{fig:log_err_2_2d_p1}
        }
        \caption{Global relative error ($y$-axis) of adaptive mixed precision $\mathcal{H}_h$-matrices for different target accuracies ($x$-axis). We consider the kernel $\log \bkt{r}$ in $2$D, with $N=25600$.}
        \label{fig:log_err_2d}
    \end{figure}

    \begin{figure}
        \centering
        \subfloat[\scriptsize $L=3$ $(P_{\subsquare})$]{
        \includegraphics[scale=.35]{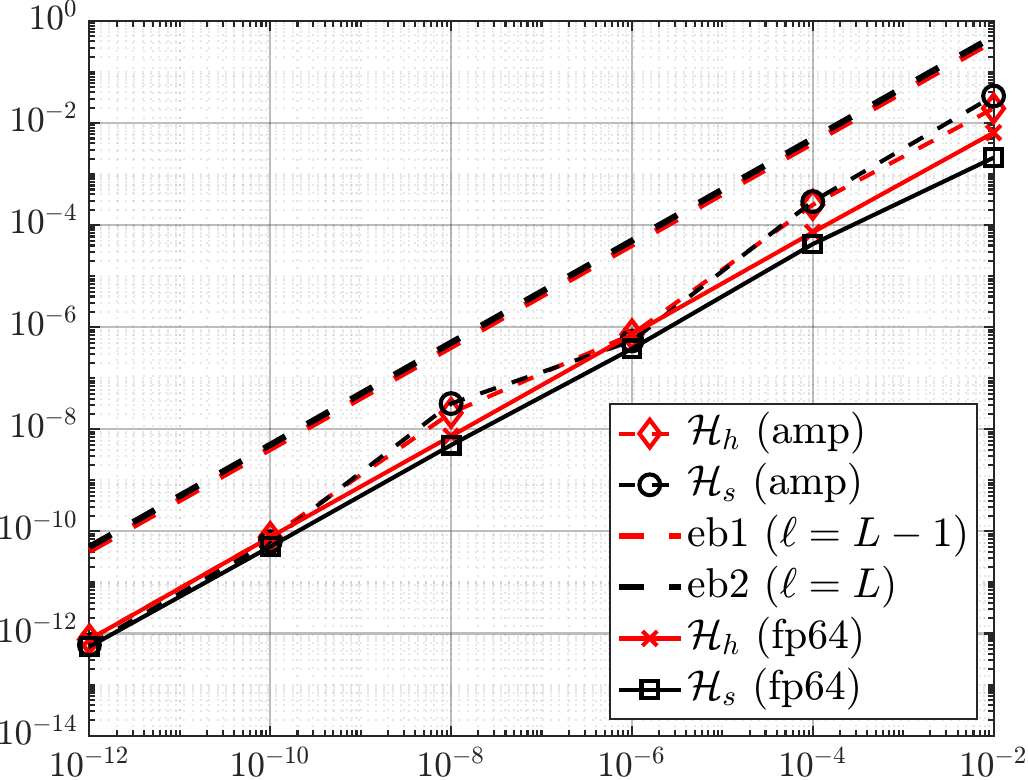}
        \label{fig:1r_err_1_3d_p1}
        }
        \subfloat[\scriptsize $L=4$ $(P_{\subsquare})$]{
        \includegraphics[scale=.35]{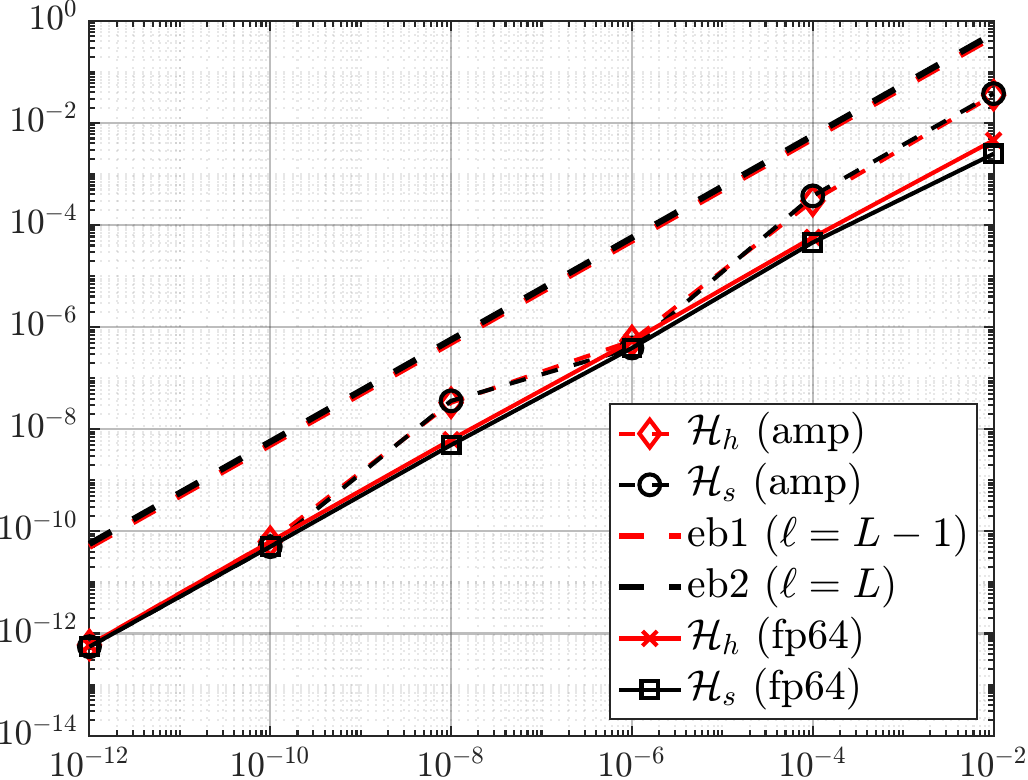}
        \label{fig:1r_err_2_3d_p1}
        }
        \caption{Global relative error ($y$-axis) of adaptive mixed precision $\mathcal{H}_h$-matrices for different target accuracies ($x$-axis). We consider the kernel $1/r$ in $3$D, with $N=64000$.}
        \label{fig:1r_err_3d}
    \end{figure}

    \begin{figure}
        \centering
        \subfloat[\scriptsize $L=3$ $(P_{\subsquare})$]{
        \includegraphics[scale=.35]{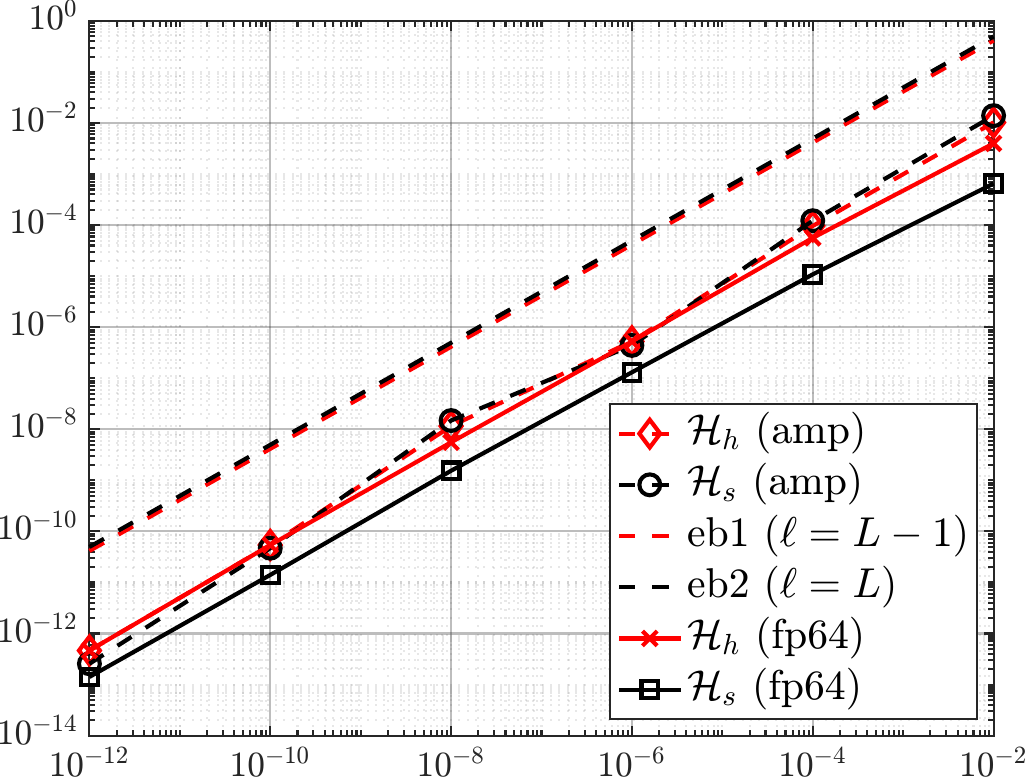}
        \label{fig:1r2_err_1_3d_p1}
        }
        \subfloat[\scriptsize $L=4$ $(P_{\subsquare})$]{
        \includegraphics[scale=.35]{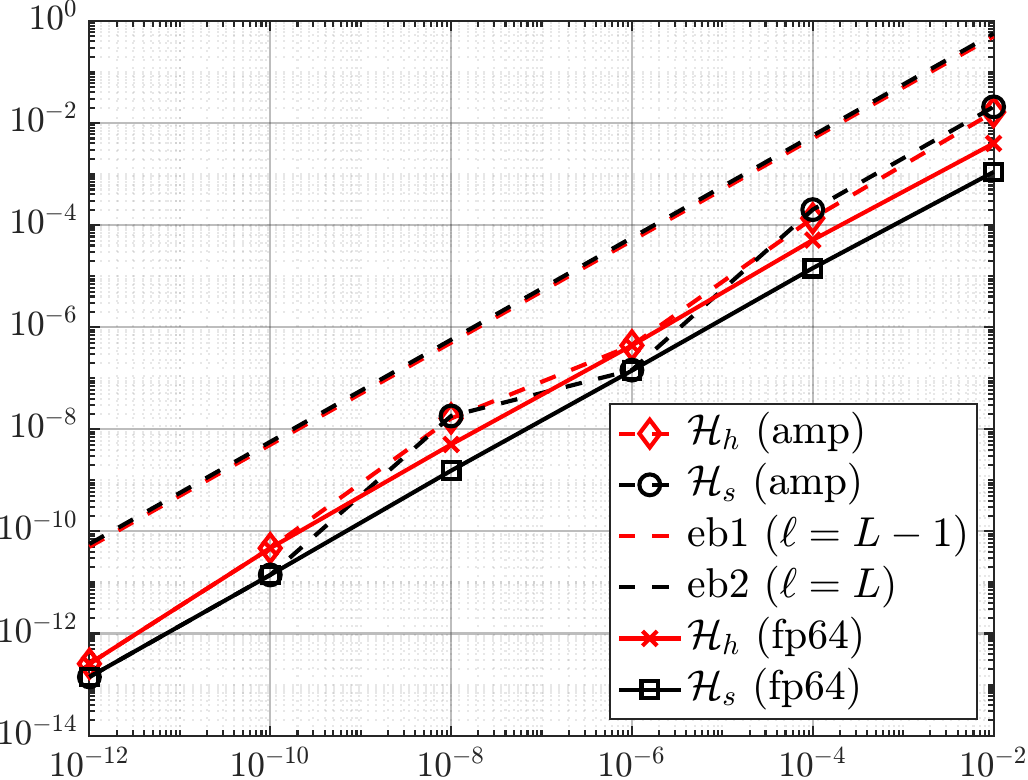}
        \label{fig:1r2_err_2_3d_p1}
        }
        \caption{Global relative error ($y$-axis) of adaptive mixed precision $\mathcal{H}_h$-matrices for different target accuracies ($x$-axis). We consider the kernel $1/r^2$ in $3$D, with $N=64000$.}
        \label{fig:1r2_err_3d}
    \end{figure}

    \begin{figure}
        \centering
        \subfloat[\scriptsize $L=3$ $(P_{\subsquare})$]{
        \includegraphics[scale=.35]{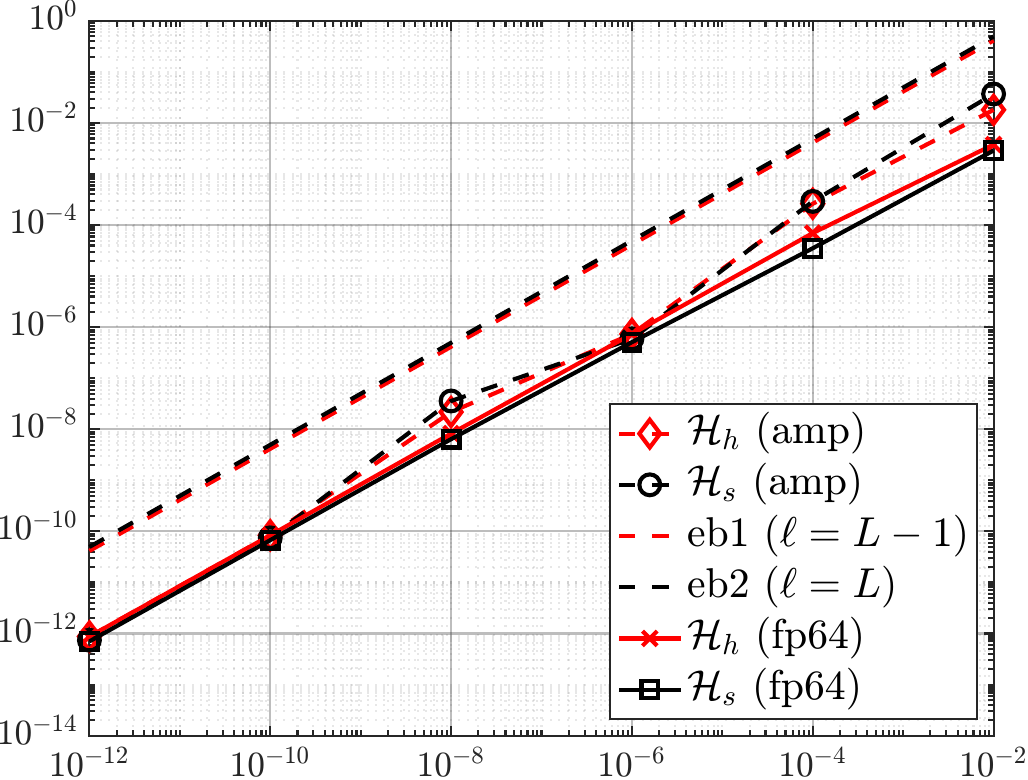}
        \label{fig:mat_err_1_3d_p1}
        }
        \subfloat[\scriptsize $L=4$ $(P_{\subsquare})$]{
        \includegraphics[scale=.35]{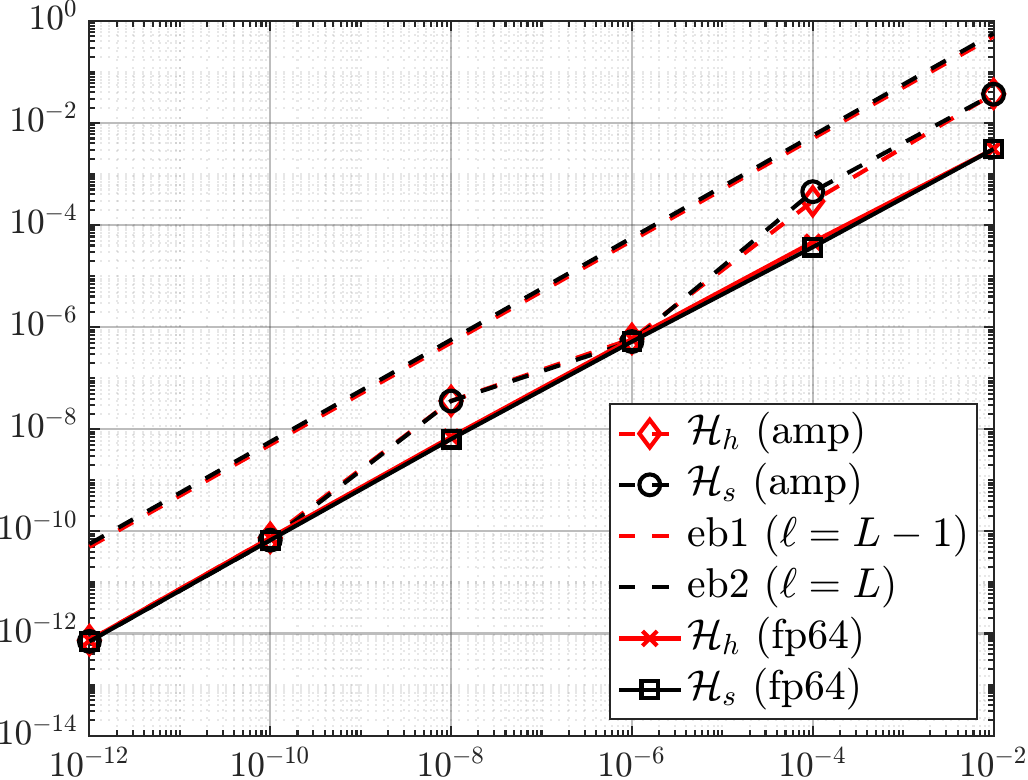}
        \label{fig:mat_err_2_3d_p1}
        }
        \caption{Global relative error ($y$-axis) of adaptive mixed precision $\mathcal{H}_h$-matrices for different target accuracies ($x$-axis). We consider the Matérn kernel in $3$D, with $N=64000$.}
        \label{fig:mat_err_3d}
    \end{figure}

\subsection{Storage gains in \texorpdfstring{$\widehat{H}$}{Hhat} representation}
In the adaptive mixed precision representation (\Cref{alg:algo2}), the low-rank factors $\widehat{U}^{\bkt{l}}_I \in \mathbb{R}^{\abs{I} \times p}, \widehat{V}^{\bkt{l}}_J \in \mathbb{R}^{\abs{J} \times p}$ of an admissible block $I \times J$ are represented in precision $u_{i,j}^{(l)}$, and the dense diagonal blocks are stored in $u =\text{fp64}$. The required bits for a low-rank (admissible) block and an $m \times n$ dense  diagonal block are given by $p\bkt{\abs{I} + \abs{J}} \times (b_{i,j}^{(l)}/64) \space \text{ and } \space mn, \text{ respectively}$, where $b_{i,j}^{(l)}$ denotes the number of bits corresponding to $u_{i,j}^{(l)}$. We divide by $64$ just to maintain consistency with the previously defined storage requirement (see \cref{eq:LR_storage}), where fp64 is the working precision. It has no impact, as the storage gain is evaluated as a ratio. Storing the low-rank factors in a precision lower than the working precision reduces the total number of bits required to store the representation $\widehat{H}$.

The storage cost accounts for the total number of bits used to store all the low-rank blocks (across all the levels) and the leaf-level dense diagonal blocks. Let $\mathcal{S}_{\mathcal{H}_h}^{\text{amp}}$, $\mathcal{S}_{\mathcal{H}_h}^{\text{fp64}}$, and $\mathcal{S}_{\mathcal{H}_s}^{\text{fp64}}$ denote the storage costs of the adaptive mixed precision $\mathcal{H}_h$-matrix, the uniform double precision $\mathcal{H}_h$-matrix and the uniform double precision $\mathcal{H}_s$-matrix, respectively. In \Cref{fig:log_mem_2d,fig:1r_mem_3d,fig:1r2_mem_3d,fig:gauss_mem_3d,fig:mat_mem_3d}, we plot the ratios $\mathcal{S}_{\mathcal{H}_h}^{\text{fp64}}/\mathcal{S}_{\mathcal{H}_h}^{\text{amp}}$ (blue line), $\mathcal{S}_{\mathcal{H}_s}^{\text{fp64}}/\mathcal{S}_{\mathcal{H}_h}^{\text{amp}}$ (magenta line) for varying values of $\epsilon$ to illustrate the storage gains achieved by the adaptive mixed precision scheme. We consider $N=25600$ and $102400$ in $2$D (\Cref{fig:log_mem_2d}), and $N=64000$ and $125000$ in $3$D (\Cref{fig:1r_mem_3d,fig:1r2_mem_3d,fig:gauss_mem_3d,fig:mat_mem_3d}), with $\ell=L-1$. The corresponding tree depth $L$ and particle distribution are indicated in each figure. The ratio $\mathcal{S}_{\mathcal{H}_h}^{\text{fp64}}/\mathcal{S}_{\mathcal{H}_h}^{\text{amp}}$ (blue line) equals $1$ for $10^{-12} \leq \epsilon \leq 10^{-10}$, since in this range the adaptive mixed precision scheme selects only double precision from $\mathcal{U}$, resulting in no storage reduction. The storage gains improve as $\epsilon$ grows. Note that in all figures the magenta line always lies above the blue line, i.e.,
$\mathcal{S}_{\mathcal{H}_s}^{\text{fp64}}/\mathcal{S}_{\mathcal{H}_h}^{\text{amp}} > \mathcal{S}_{\mathcal{H}_h}^{\text{fp64}}/\mathcal{S}_{\mathcal{H}_h}^{\text{amp}} \implies \mathcal{S}_{\mathcal{H}_s}^{\text{fp64}} > \mathcal{S}_{\mathcal{H}_h}^{\text{fp64}}$. For instance, as shown in \Cref{fig:log_mem_1_2d}, when $\epsilon = 10^{-2}$, $\mathcal{S}_{\mathcal{H}_s}^{\text{fp64}} / \mathcal{S}_{\mathcal{H}_h}^{\text{fp64}} \approx 4.8/2 = 2.4$. This again highlights the storage advantage of our hybrid hierarchical matrix in uniform (double) precision, as demonstrated in \Cref{subsec:Hs_vs_Hh}. The gap between the blue and magenta lines widens with $\epsilon$, showcasing that the hybrid hierarchical matrix becomes increasingly effective as $\epsilon$ grows. The storage gains of the adaptive mixed precision $\mathcal{H}_h$-matrix over the uniform double precision $\mathcal{H}_s$-matrix result from two reasons: the use of our hybrid admissibility condition and the use of lower precisions.

    \begin{figure}[H]
        \centering
        \subfloat[\scriptsize $N=25600, L=5$ $(P_{\subsquare})$]{
        \includegraphics[scale=.34]{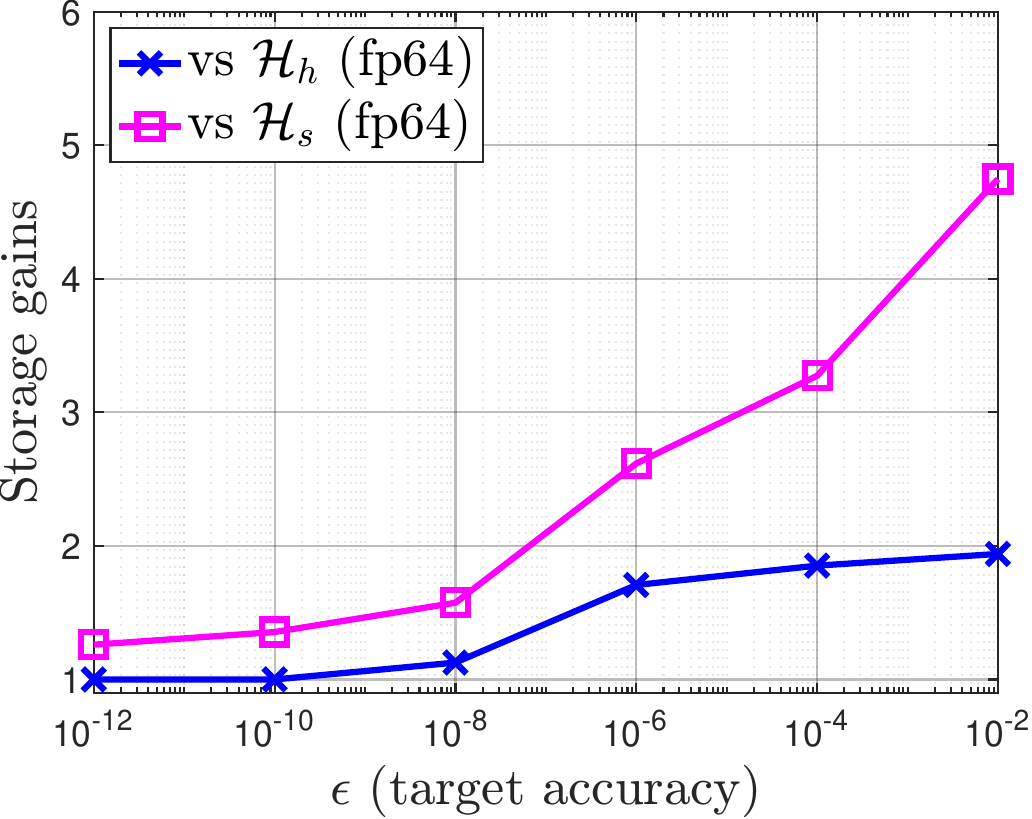}
        \label{fig:log_mem_1_2d}
        }
        \subfloat[\scriptsize $N=102400, L=6$ $(P_{\subsquare})$]{
        \includegraphics[scale=.34]{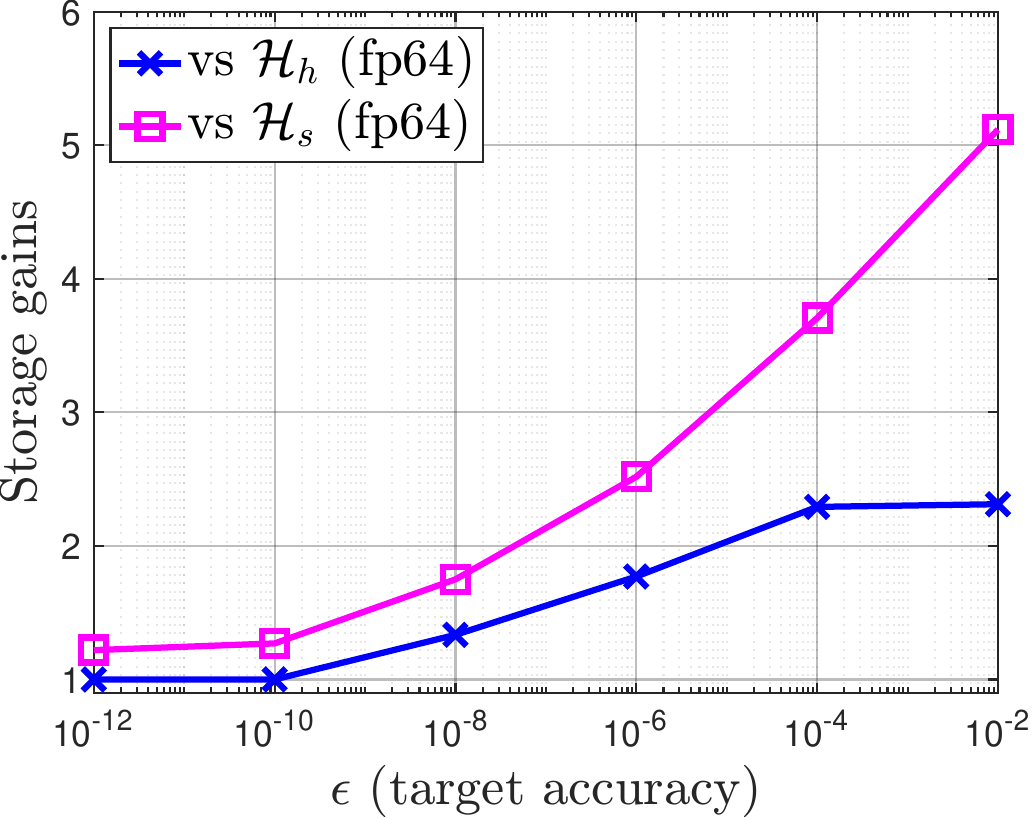}
        \label{fig:log_mem_2_2d}
        }
        \caption{Storage gains of adaptive mixed precision $\mathcal{H}_h$-matrices compared with uniform double precision $\mathcal{H}_h$ and $\mathcal{H}_{s}$ matrices for the kernel $\log \bkt{r}$ in $2$D.}
        \label{fig:log_mem_2d}
    \end{figure}

    \begin{figure}[H]
        \centering
        \subfloat[\scriptsize $N=64000, L=3$ $(P_{\subsquare})$]{
        \includegraphics[scale=.34]{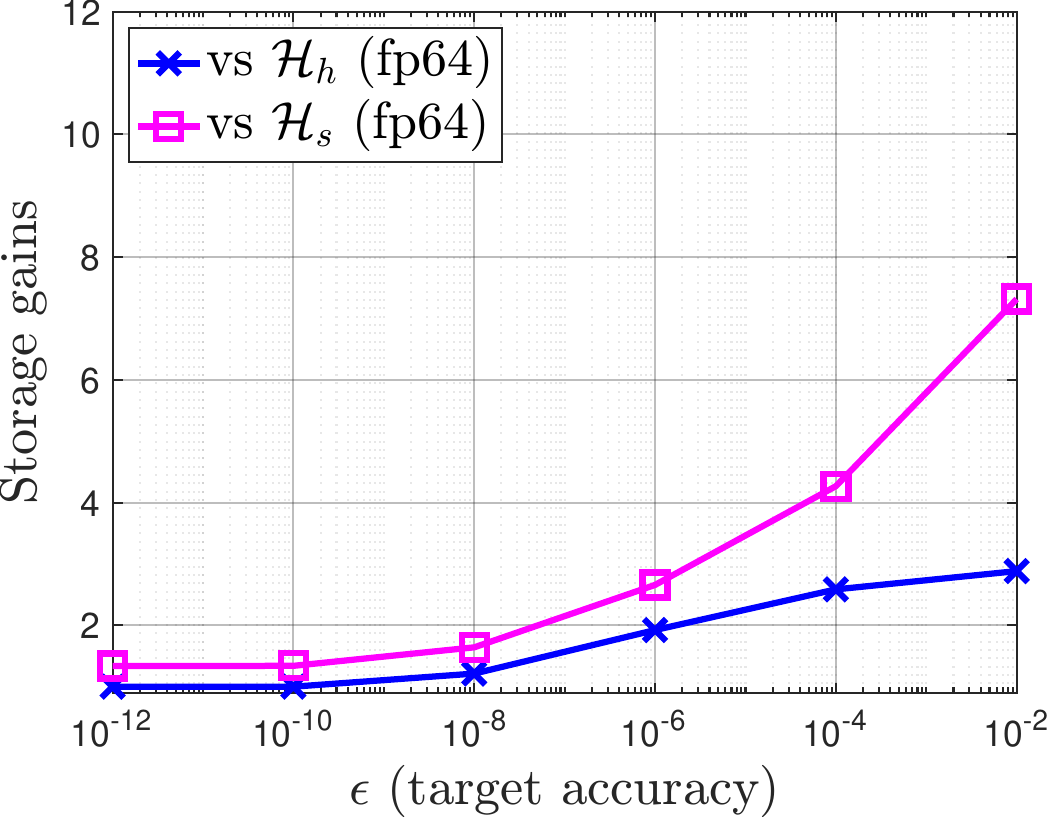}
        \label{fig:1r_mem_1_3d_p1}
        }
        \subfloat[\scriptsize $N=125000, L=4$ $(P_{\subsquare})$]{
        \includegraphics[scale=.34]{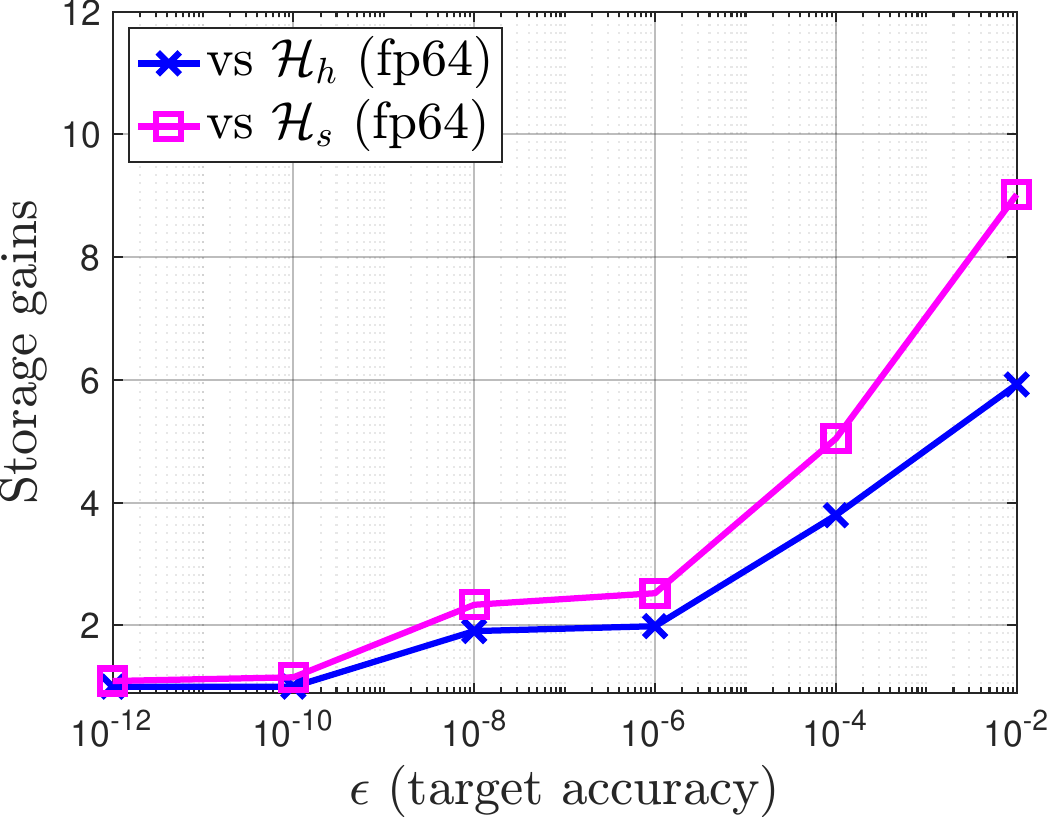}
        \label{fig:1r_mem_2_3d_p1}
        }
        \caption{Storage gains of adaptive mixed precision $\mathcal{H}_h$-matrices compared with uniform double precision $\mathcal{H}_h$ and $\mathcal{H}_{s}$ matrices for the kernel $1/r$ in $3$D.}
        \label{fig:1r_mem_3d}
    \end{figure}

    \begin{figure}[H]
        \centering
        \subfloat[\scriptsize $N=64000, L=3$ $(P_{\subsquare})$]{
        \includegraphics[scale=.34]{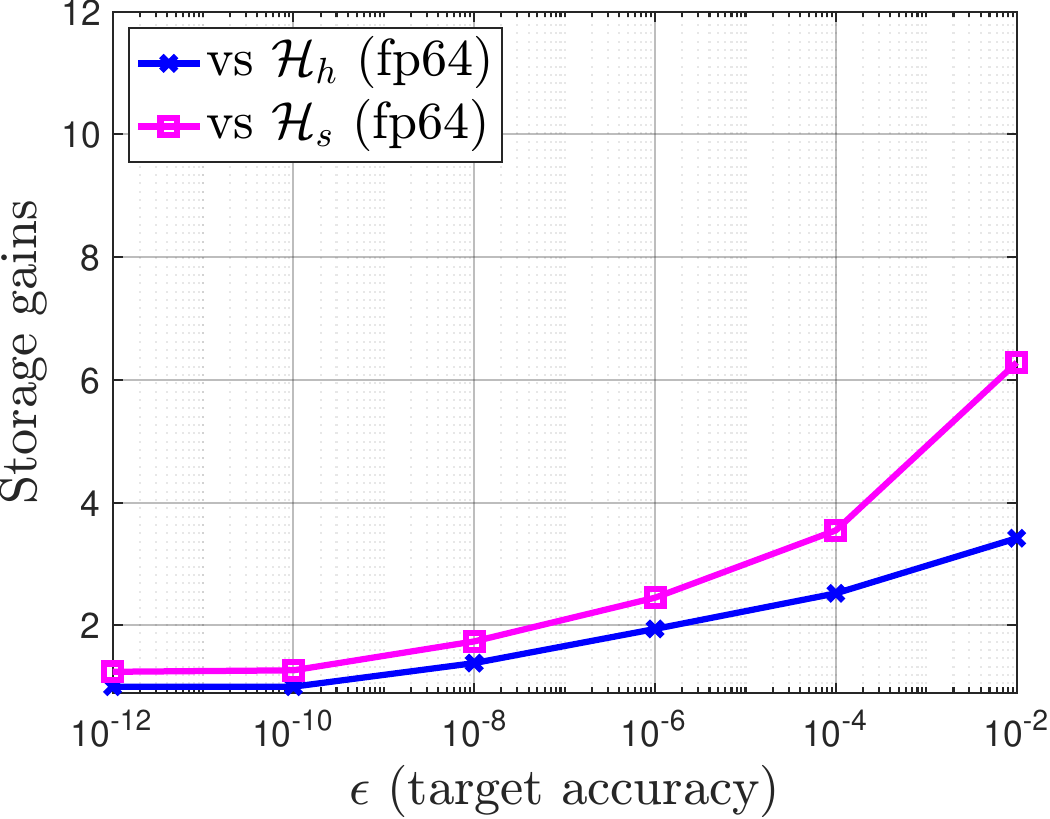}
        \label{fig:1r2_mem_1_3d_p1}
        }
        \subfloat[\scriptsize $N=125000, L=4$ $(P_{\subsquare})$]{
        \includegraphics[scale=.34]{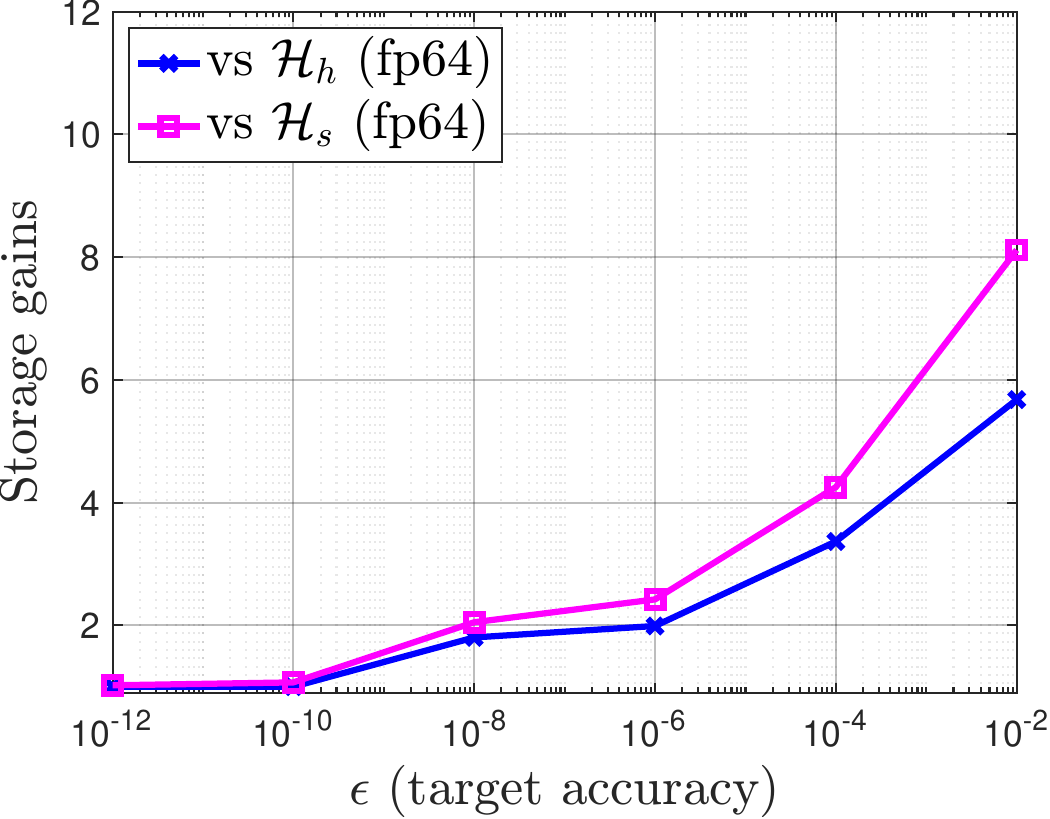}
        \label{fig:1r2_mem_2_3d_p1}
        }
        \caption{Storage gains of adaptive mixed precision $\mathcal{H}_h$-matrices compared with uniform double precision $\mathcal{H}_h$ and $\mathcal{H}_{s}$ matrices for the kernel $1/r^2$ in $3$D.}
        \label{fig:1r2_mem_3d}
    \end{figure}

    \begin{figure}[H]
        \centering
        \subfloat[\scriptsize $N=64000, L=3$ $(P_{\subsquare})$]{
        \includegraphics[scale=.34]{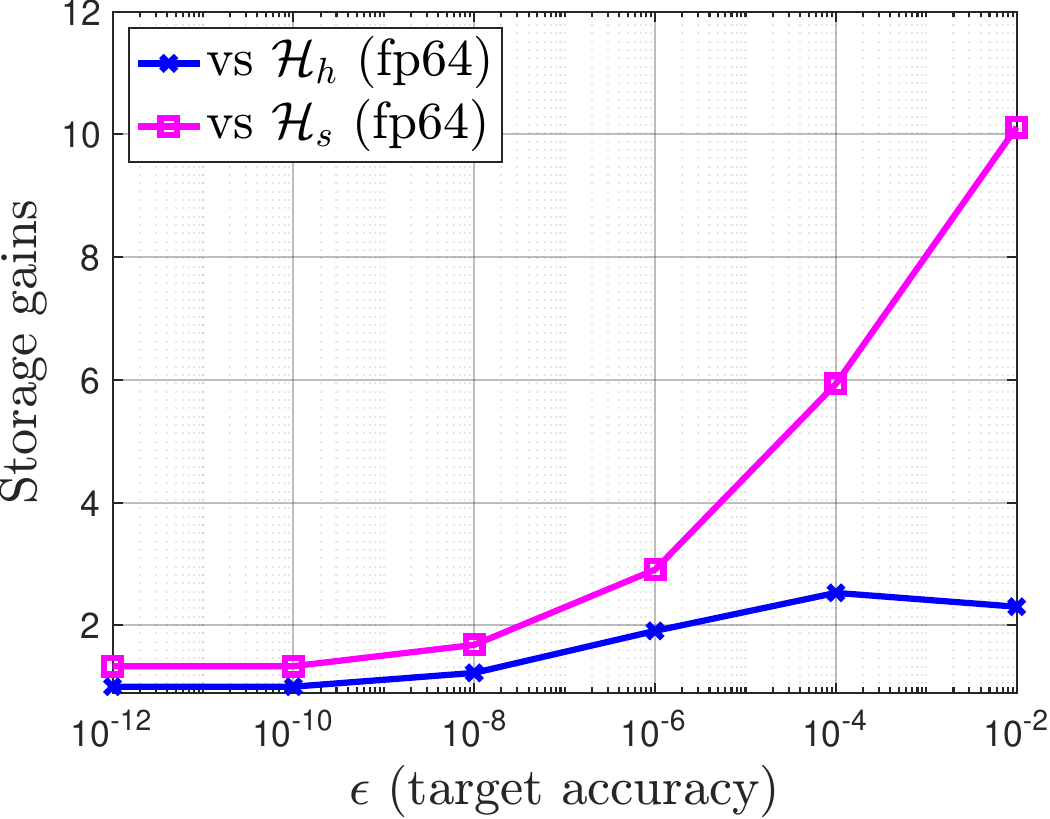}
        \label{fig:mat_mem_1_3d_p1}
        }
        \subfloat[\scriptsize $N=125000, L=4$ $(P_{\subsquare})$]{
        \includegraphics[scale=.34]{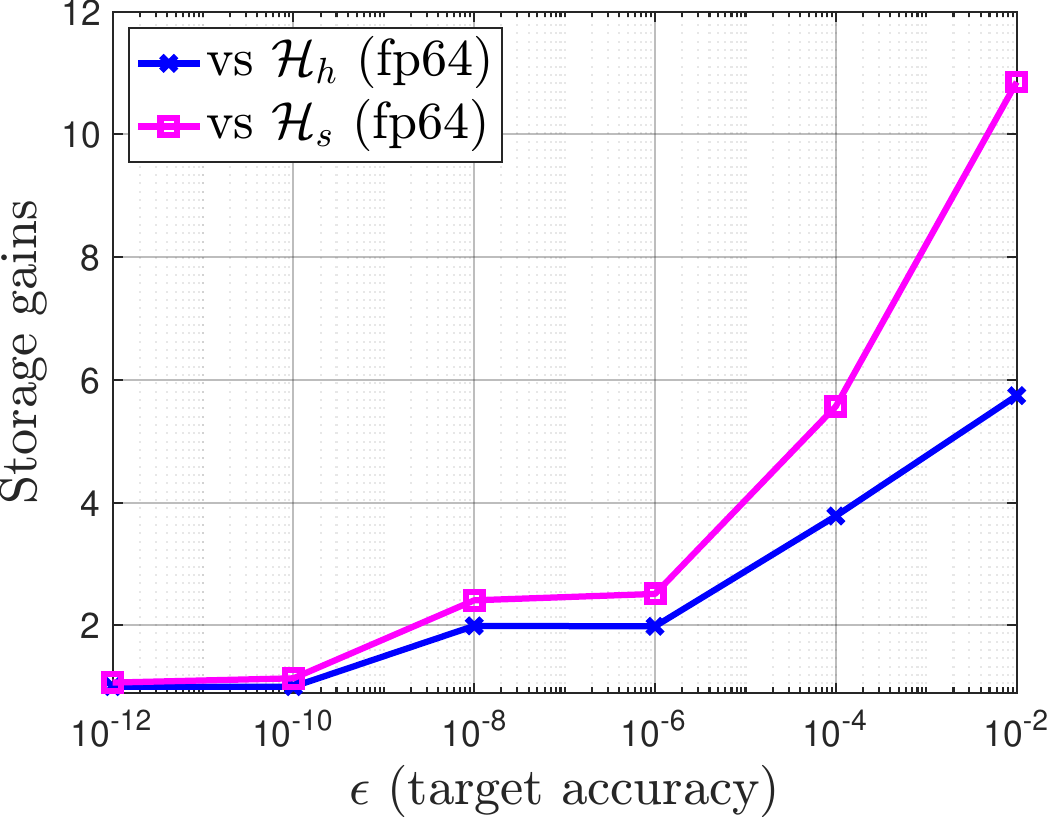}
        \label{fig:mat_mem_2_3d_p1}
        }
        \caption{Storage gains of adaptive mixed precision $\mathcal{H}_h$-matrices compared with uniform double precision $\mathcal{H}_h$ and $\mathcal{H}_{s}$ matrices for the Matérn kernel in $3$D.}
        \label{fig:mat_mem_3d}
    \end{figure}

 The storage gains plots show that the adaptive mixed precision $\mathcal{H}_h$-matrix can significantly reduce storage costs compared with the uniform double precision $\mathcal{H}_s$-matrix while maintaining accuracy. For example, for the Matérn and Gaussian kernels with $\epsilon = 10^{-2}$, the storage gains can reach $\sim 11 \times$ (see \Cref{fig:mat_mem_2_3d_p1,fig:gauss_mem_2_3d_p1}). Notably, these gains do not come at the expense of significant accuracy; the corresponding global relative errors are shown in \Cref{subsec:global_error}. 

\subsection{Backward error of matrix-vector product}
We examine the backward error of the adaptive mixed precision $\mathcal{H}_h$-matrix-vector product. We first construct the $\widehat{H}$ representation using \Cref{alg:algo2}, followed by the matrix-vector product $b = \widehat{H}x$ using \Cref{alg:algo3}.

We perform an experiment to validate \Cref{thm:mvp}. The dense kernel matrix $H \in \mathbb{R}^{N \times N}$ is formed using the kernel $1/r$, and the column vector $x \in \mathbb{R}^{N \times 1}$ is generated from a uniform distribution in $(0,1)$. We plot the relative backward error, i.e., $(\magn{Hx - b})/(\magn{H} \magn{x})$, for different target accuracies $\epsilon$. The error bound ``eb'' is plotted based on \Cref{thm:mvp}. We consider three different working precisions $u$: fp64, fp32, and fp16 in \Cref{alg:algo3}. The results are shown in \Cref{fig:mvp_error_3d}.

    \begin{figure}[H]
        \centering
        \subfloat[\scriptsize $N=8000, L=2$ $(P_{\subsquare})$]{
        \includegraphics[scale=.36]{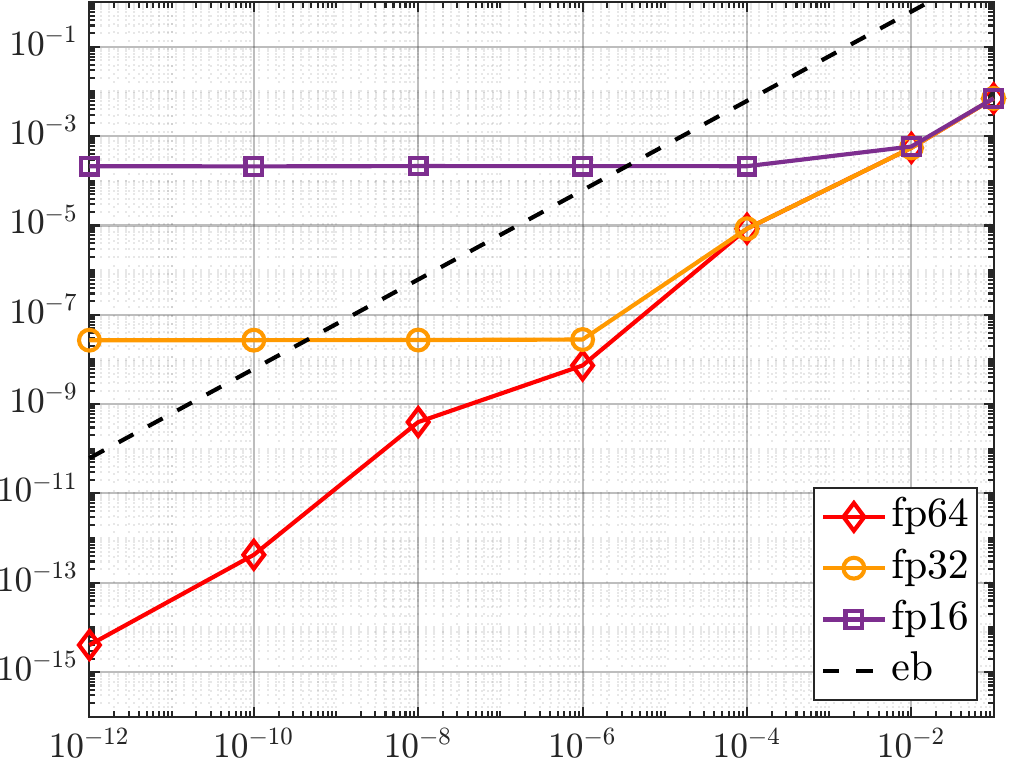}
        \label{fig:mvp_1_3d_p1}
        }
        \subfloat[\scriptsize $N=64000, L=3$ $(P_{\subsquare})$]{
        \includegraphics[scale=.36]{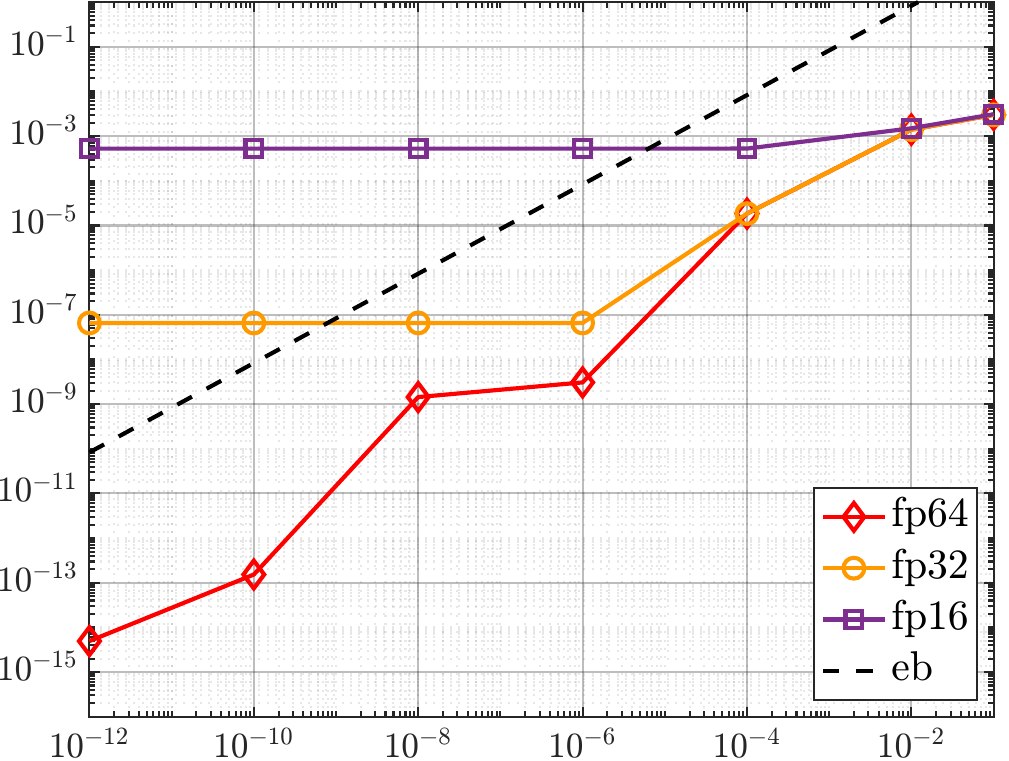}
        \label{fig:mvp_2_3d_p1}
        }
        \caption{Relative backward error of matrix-vector product ($y$-axis) using adaptive mixed precision $\mathcal{H}_h$-matrices for different target accuracies $\epsilon$ ($x$-axis). We consider the kernel $1/r$ in $3$D.}
        \label{fig:mvp_error_3d}
    \end{figure}

We can see that one should choose the working precision $u \lesssim \epsilon$ (more precisely, $u \lesssim \epsilon/N$; this is why we present results for $N=8000$ and $N=64000$) in order to avoid the effects of finite precision in the matrix-vector product computation. The larger $\epsilon$ becomes, the lower the precision that can be used without affecting the backward error. We observe similar results for other kernel functions.

The $\widehat{H}$-matrix–vector products could achieve a speedup matching the storage gains, assuming that each arithmetic operation is proportional to the number of bits involved. However, this is a more complex issue, as the time cost of various arithmetic operations depends on the hardware architecture and other factors. The $\widehat{H}$-matrix-vector product can also be leveraged to develop fast iterative solvers.

\section{Conclusion} \label{sec:conclusion}
We have proposed a novel hybrid admissibility condition that exploits the standard admissibility condition at coarser levels and the weak admissibility condition at finer levels of the balanced $2^d$-tree. Based on our hybrid admissibility condition, we have introduced hybrid hierarchical matrices, denoted as $\mathcal{H}_h$-matrices. Standard admissibility-based $\mathcal{H}_s$-matrices and weak admissibility-based HODLR matrices can be viewed as special cases of $\mathcal{H}_h$-matrices. We have also derived a criterion under which $\mathcal{H}_h$-matrices require less storage than $\mathcal{H}_s$-matrices in uniform precision setting. We have presented a rounding error analysis of $\mathcal{H}_h$-matrices and show that the admissible blocks can be stored in lower precision without sacrificing accuracy. This analysis provides an explicit rule for dynamically selecting the appropriate precision for each admissible block, and we have presented an adaptive mixed precision $\mathcal{H}_h$-matrix representation based on it. We have further investigated the backward error of the matrix–vector product with the resulting representation. Our theoretical results are validated through a series of numerical experiments on kernel matrices. The numerical results confirm that the proposed adaptive mixed precision $\mathcal{H}_h$-matrices can significantly reduce the storage costs over uniform double precision $\mathcal{H}_s$-matrices. We would like to make the implementation of the proposed algorithms publicly available at \url{https://github.com/riteshkhan/ampHmat}.

In our numerical experiments, we set $\ell = L-1$ for the $\mathcal{H}_h$-matrices. Nevertheless, storage gains can be further improved by selecting the optimal switching level. Future work may consider developing \Cref{alg:algo_optimal_level}-type approach in the mixed precision setting, along with a rigorous and comprehensive analysis.

\bibliographystyle{siam}
\bibliography{refs}

\clearpage
\appendix

\section{Proof of \texorpdfstring{\Cref{lemma:global_error}}{lemmaMVP}} \label{sec:app_proof_lemma}
In this section, we prove \Cref{lemma:global_error}, which gives a bound on the approximation error of $\mathcal{H}_h \bkt{p,\epsilon}$-matrices.

\begin{proof}
    We assume that $\widetilde{H}$ is an $\mathcal{H}_h \bkt{p,\epsilon}$-matrix. For any hierarchical matrix, the squared Frobenius norm can be computed block-wise and summed over all blocks, as follows:

\begin{align*}
\resizebox{\linewidth}{!}{$
\begin{aligned}
       \magn{H - \widetilde{H}}^2 & = \quad \dsum_{l=1}^{\ell} \dsum_{i=1}^{2^{dl}} \dsum_{\mathcal{C}_j^{\bkt{l}} \in \mathcal{IL}_s \bkt{\mathcal{C}_i^{\bkt{l}}}} \magn{H^{(l)}_{I,J} - \widetilde{H}^{(l)}_{I,J}}^2 + \dsum_{i=1}^{2^{d \ell}} \dsum_{\mathcal{C}_j^{\bkt{\ell}} \in \mathcal{N}_s \bkt{\mathcal{C}_i^{\bkt{\ell}}}} \magn{H^{(\ell)}_{I,J} - \widetilde{H}^{(\ell)}_{I,J}}^2 \\
        & \quad \quad + \dsum_{l=\ell+1}^{L} \dsum_{i=1}^{2^{dl}} \dsum_{\mathcal{C}_j^{\bkt{l}} \in \mathcal{IL}_w \bkt{\mathcal{C}_i^{\bkt{l}}}} \magn{H^{(l)}_{I,J} - \widetilde{H}^{(l)}_{I,J}}^2 \\
        & \leq \quad \epsilon^2 \dsum_{l=1}^{\ell} \dsum_{i=1}^{2^{dl}} \dsum_{\mathcal{C}_j^{\bkt{l}} \in \mathcal{IL}_s \bkt{\mathcal{C}_i^{\bkt{l}}}} \magn{H^{(l)}_{I,J}}^2 + \epsilon^2 \dsum_{i=1}^{2^{d \ell}} \dsum_{\mathcal{C}_j^{\bkt{\ell}} \in \mathcal{N}_s \bkt{\mathcal{C}_i^{\bkt{\ell}}}} \magn{H^{(\ell)}_{I,J}}^2 \\
        & \quad \quad + \epsilon^2 \dsum_{l=\ell+1}^{L} \dsum_{i=1}^{2^{dl}} \dsum_{\mathcal{C}_j^{\bkt{l}} \in \mathcal{IL}_w \bkt{\mathcal{C}_i^{\bkt{l}}}} \magn{H^{(l)}_{I,J}}^2 \qquad \bkt{\text{By applying } \cref{eq:adm_block_norm}}
       \\ 
       & \leq \quad \epsilon^2 \dsum_{l=1}^{\ell} \dsum_{i=1}^{2^{dl}} \dsum_{\mathcal{C}_j^{\bkt{l}} \in \mathcal{IL}_s \bkt{\mathcal{C}_i^{\bkt{l}}}} \magn{H^{(l)}_{I,J}}^2 + \epsilon^2 \dsum_{i=1}^{2^{d \ell}} \dsum_{\mathcal{C}_j^{\bkt{\ell}} \in \mathcal{N}_s \bkt{\mathcal{C}_i^{\bkt{\ell}}}} \magn{H^{(\ell)}_{I,J}}^2 \\
       & \quad \quad + \epsilon^2 \dsum_{l=\ell+1}^{L} \dsum_{i=1}^{2^{dl}} \dsum_{\mathcal{C}_j^{\bkt{l}} \in \mathcal{IL}_w \bkt{\mathcal{C}_i^{\bkt{l}}}} \magn{H^{(l)}_{I,J}}^2 + \epsilon^2  \dsum_{i=1}^{2^{dL}} \magn{H^{(L)}_{I,I}}^2
        = \epsilon^2 \magn{H}^2
\end{aligned}
$}
\end{align*}

Hence,
\begin{align*}
    \magn{H - \widetilde{H}} \leq \epsilon \magn{H}.
\end{align*}
\end{proof}

\section{Fast Frobenius norm computation}
Here, we present a computationally efficient approach for computing the Frobenius norm using the low-rank factors. 
The squared Frobenius norm is evaluated block-wise and followed by aggregation over all blocks, as described in \Cref{alg:Frob_norm}. 

During the construction of the $\widetilde{H}$ representation, \Cref{alg:Frob_norm} is used in conjunction with \Cref{alg:algo1}. Hence, the Frobenius norm can be computed efficiently without incurring additional computational cost. 

 \begin{algorithm} [H]
 \small
	\caption{Fast Frobenius norm computation.}\label{alg:Frob_norm}
	\begin{algorithmic}[1]
        \State \textbf{Input:} $H \in \Rb^{N \times N}$, $\mathcal{T}^L$, $0 \leq \ell \leq L$, $\epsilon$.
        \State \textbf{Output:} $\magn{\widetilde{H}^{\bkt{l}}_{I,J}}$, $\magn{\widetilde{H}}$.
		\State $\magn{\widetilde{H}} \gets 0$
\For{\texttt{$l=1:\ell$}} 
				\For{\texttt{$i=1:2^{dl}$}}
					\For{$\mathcal{C}^{\bkt{l}}_{j} \in$ $\mathcal{IL}_s \bkt{\mathcal{C}^{\bkt{l}}_{i}}$}
                        \State $\Big[\widetilde{U}^{\bkt{l}}_I, \widetilde{\Sigma}^{\bkt{l}}_{IJ}, \widetilde{V}^{\bkt{l}}_J  \Big] = \texttt{SVDcompression} \bkt{H^{\bkt{l}}_{I,J}, \epsilon}$
                        \State $\magn{\widetilde{H}^{\bkt{l}}_{I,J}}^2 \gets \texttt{trace}\bkt{\bkt{\widetilde{\Sigma}^{\bkt{l}}_{IJ}}^2}$ \Comment{The Frobenius norm of the low-rank blocks is computed using the diagonal matrix $\widetilde{\Sigma}^{\bkt{l}}_{IJ}$ obtained from SVD.}
                        \State $\magn{\widetilde{H}}^2 \gets \magn{\widetilde{H}}^2 + \magn{\widetilde{H}^{\bkt{l}}_{I,J}}^2$
                        \Comment{Add square of the Frobenius norm.}
					\EndFor
				\EndFor
			\EndFor

                \If{$\ell == L$}
            		\For{\texttt{$i=1:2^{d\ell}$}}
    					\For{$\mathcal{C}^{\bkt{\ell}}_{j} \in$ $\mathcal{N}_s \bkt{\mathcal{C}^{\bkt{\ell}}_{i}}$}
                            \State $\magn{\widetilde{H}}^2 \gets \magn{\widetilde{H}}^2 + \magn{H^{\bkt{\ell}}_{I,J}}^2$ 
                            \Comment{Direct computation of Frobenius norm.}
    					\EndFor
    				\EndFor 
                \Else
            		\For{\texttt{$i=1:2^{d\ell}$}}
    					\For{$\mathcal{C}^{\bkt{\ell}}_{j} \in$ $\mathcal{N}_s \bkt{\mathcal{C}^{\bkt{\ell}}_{i}}$}
                            \State $\Big[\widetilde{U}^{\bkt{\ell}}_I, \widetilde{\Sigma}^{\bkt{\ell}}_{IJ}, \widetilde{V}^{\bkt{\ell}}_J  \Big] = \texttt{SVDcompression} \bkt{H^{\bkt{\ell}}_{I,J}, \epsilon}$
                            \State $\magn{\widetilde{H}^{\bkt{\ell}}_{I,J}}^2 \gets \texttt{trace}\bkt{\bkt{\widetilde{\Sigma}^{\bkt{\ell}}_{IJ}}^2}$,\quad $\magn{\widetilde{H}}^2 \gets \magn{\widetilde{H}}^2 + \magn{\widetilde{H}^{\bkt{\ell}}_{I,J}}^2$ 
    					\EndFor
    				\EndFor 
                \EndIf

            \For{\texttt{$l=\ell+1:L$}} 
				\For{\texttt{$i=1:2^{dl}$}}
					\For{$\mathcal{C}^{\bkt{l}}_{j} \in$ $\mathcal{IL}_w \bkt{\mathcal{C}^{\bkt{l}}_{i}}$}
                        \State $\Big[\widetilde{U}^{\bkt{l}}_I, \widetilde{\Sigma}^{\bkt{l}}_{IJ}, \widetilde{V}^{\bkt{l}}_J  \Big] = \texttt{SVDcompression} \bkt{H^{\bkt{l}}_{I,J}, \epsilon}$
                        \State $\magn{\widetilde{H}^{\bkt{l}}_{I,J}}^2 \gets 
                        \texttt{trace}\bkt{\bkt{\widetilde{\Sigma}^{\bkt{l}}_{IJ}}^2}$, \quad $\magn{\widetilde{H}}^2 \gets \magn{\widetilde{H}}^2 + \magn{\widetilde{H}^{\bkt{l}}_{I,J}}^2$ 
					\EndFor
				\EndFor
			\EndFor

            \For{\texttt{$i=1:2^{dL}$}}
                \State $\magn{\widetilde{H}}^2 \gets \magn{\widetilde{H}}^2 + \magn{H^{\bkt{L}}_{I,I}}^2$
                \Comment{Add square of the directly computed Frobenius norm of the leaf-level dense diagonal blocks.}
            \EndFor
	\end{algorithmic}
\end{algorithm}

\section{Numerical results for the Gaussian kernel} \label{sec:Gaussian_kernel}
The global relative error (\Cref{fig:gauss_err_3d}) and the storage gains (\Cref{fig:gauss_mem_3d}) for the Gaussian kernel $(\exp (-{r^2}/{2}))$ with particle set $P_{\subsquare}$ are presented here.

    \begin{figure}[H]
        \centering
        \subfloat[\scriptsize $L=3$ $(P_{\subsquare})$]{
        \includegraphics[scale=.35]{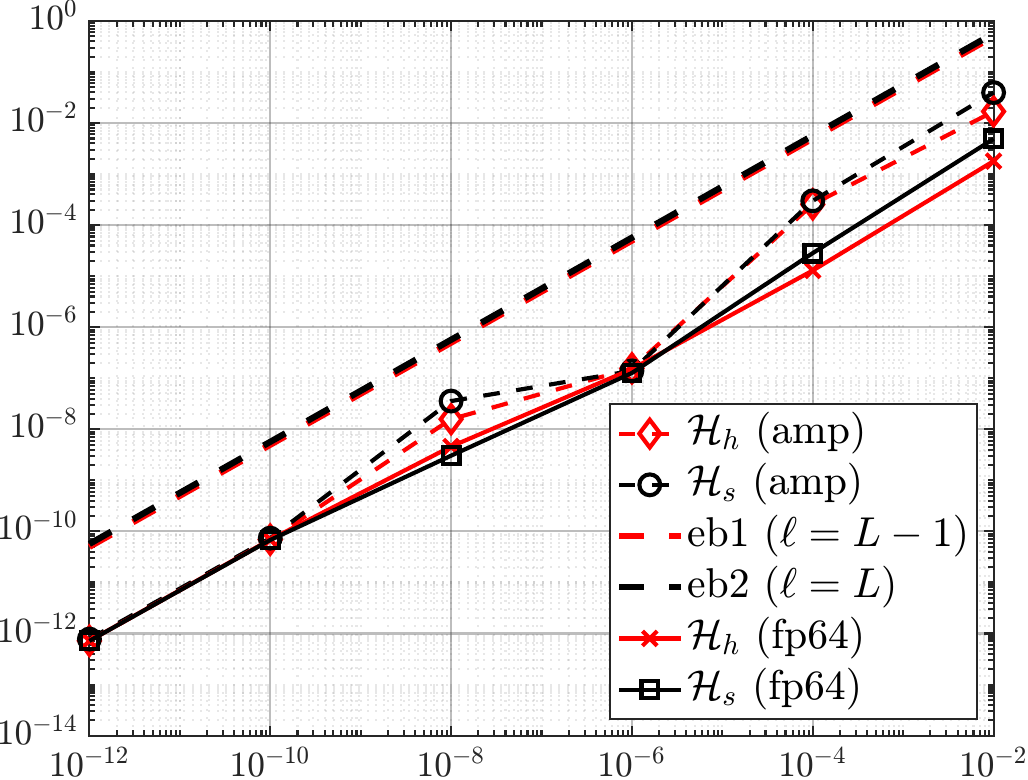}
        \label{fig:gauss_err_1_3d_p1}
        }
        \subfloat[\scriptsize $L=4$ $(P_{\subsquare})$]{
        \includegraphics[scale=.35]{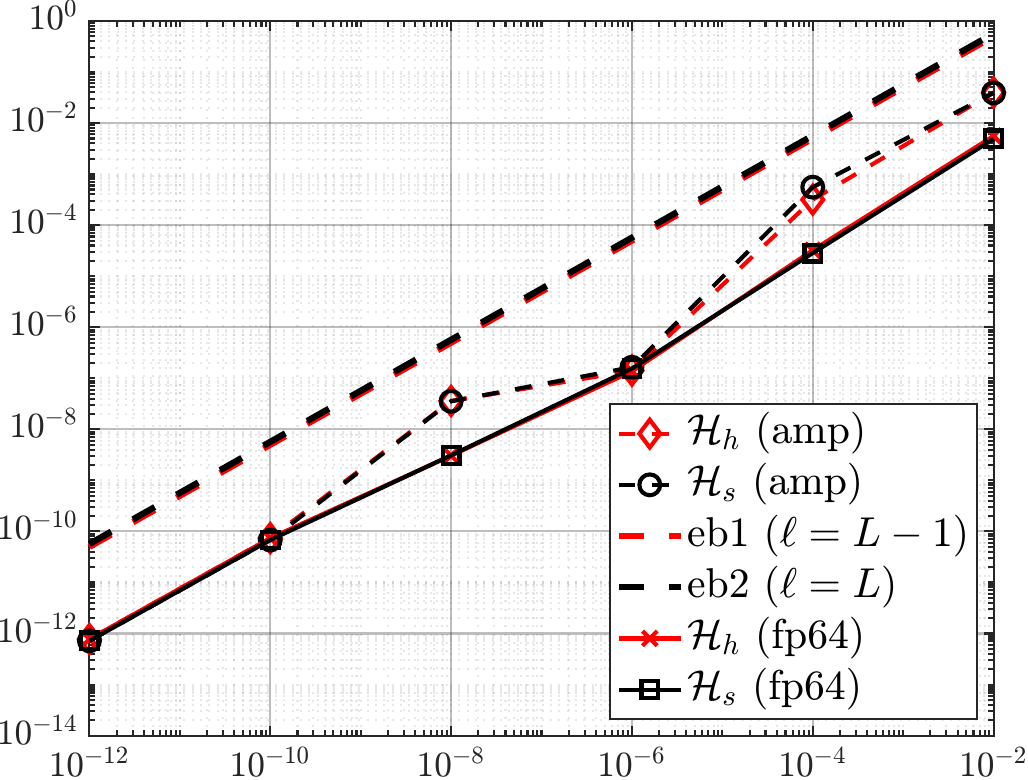}
        \label{fig:gauss_err_2_3d_p1}
        }
        \caption{Global relative error ($y$-axis) of adaptive mixed precision $\mathcal{H}_h$-matrices for different target accuracies ($x$-axis). We consider the Gaussian kernel in $3$D, with $N=64000$.}
        \label{fig:gauss_err_3d}
    \end{figure}

    \begin{figure}[H]
        \centering
        \subfloat[\scriptsize $N=64000, L=3$ $(P_{\subsquare})$]{
        \includegraphics[scale=.34]{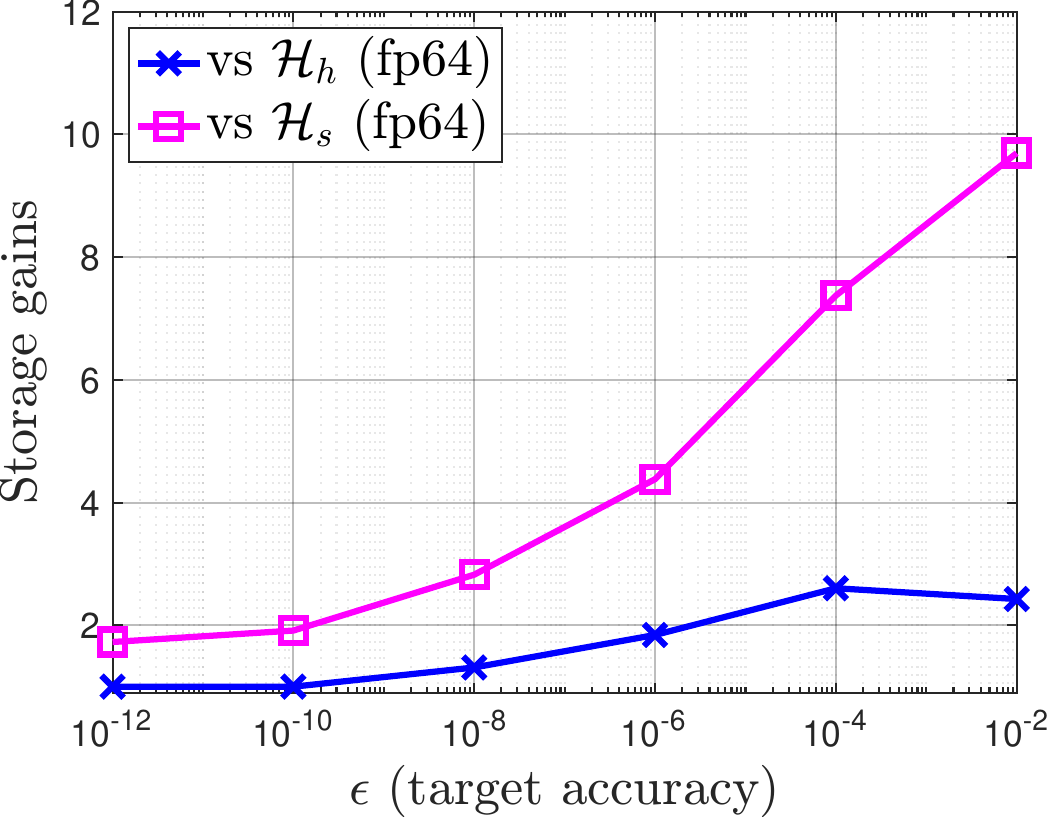}
        \label{fig:gauss_mem_1_3d_p1}
        }
        \subfloat[\scriptsize $N=125000, L=4$ $(P_{\subsquare})$]{
        \includegraphics[scale=.34]{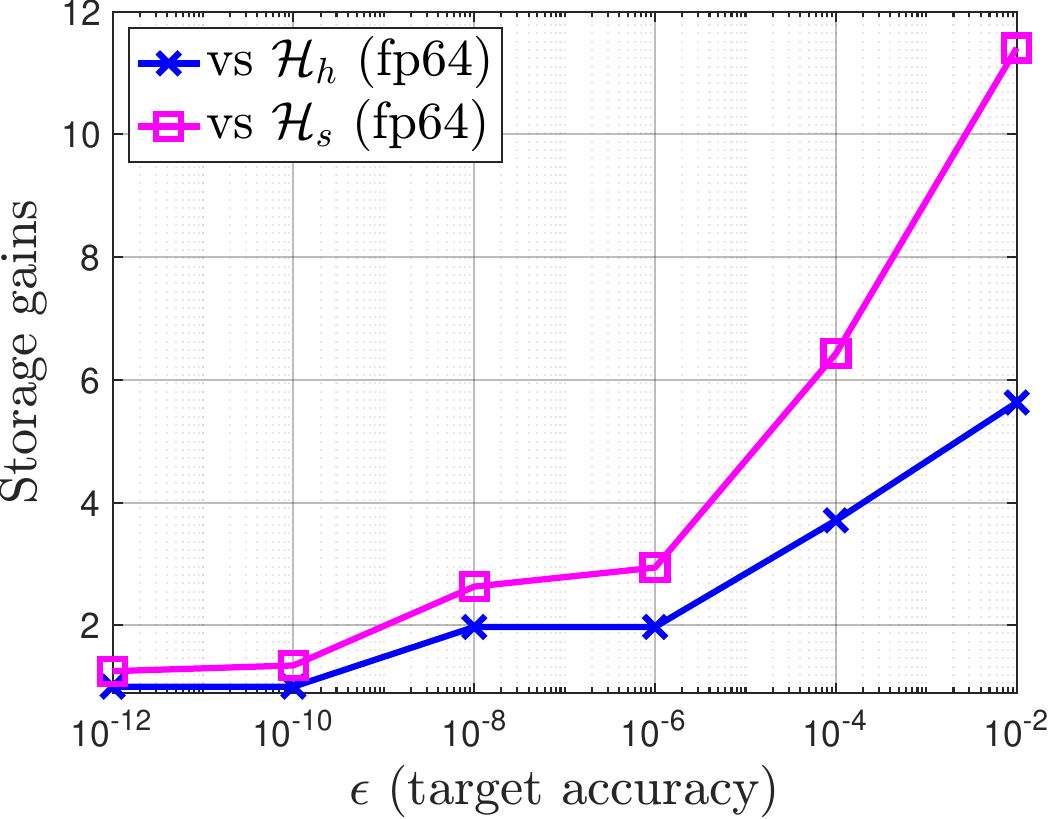}
        \label{fig:gauss_mem_2_3d_p1}
        }
        \caption{Storage gains of adaptive mixed precision $\mathcal{H}_h$-matrices compared with uniform double precision $\mathcal{H}_h$ and $\mathcal{H}_{s}$ matrices for the Gaussian kernel in $3$D.}
        \label{fig:gauss_mem_3d}
    \end{figure}

\section{Numerical results for particle set \texorpdfstring{$P_{\subcircle}$}{Pcircle}} \label{sec:circ_particle_dist}
This section presents the numerical results for the particle set $P_{\subcircle}$. We consider all five kernel functions mentioned in \Cref{sec:num_results}, and construct the corresponding kernel matrices $H$ as described therein.

\subsection{Global error in \texorpdfstring{$\widehat{H}$}{Hhat} representation}
\Cref{fig:log_err_2d_p2,fig:1r_err_3d_p2,fig:1r2_err_3d_p2,fig:gauss_err_3d_p2,fig:mat_err_3d_p2} show how the relative global error, $\magn{H - \widehat{H}}/\magn{H}$, of the adaptive mixed precision representations $\widehat{H}$ (see \Cref{alg:algo2}) varies with target accuracy $\epsilon$.

    \begin{figure}
        \centering
        \subfloat[\scriptsize $L=5$ $(P_{\subcircle})$]{
        \includegraphics[scale=.35]{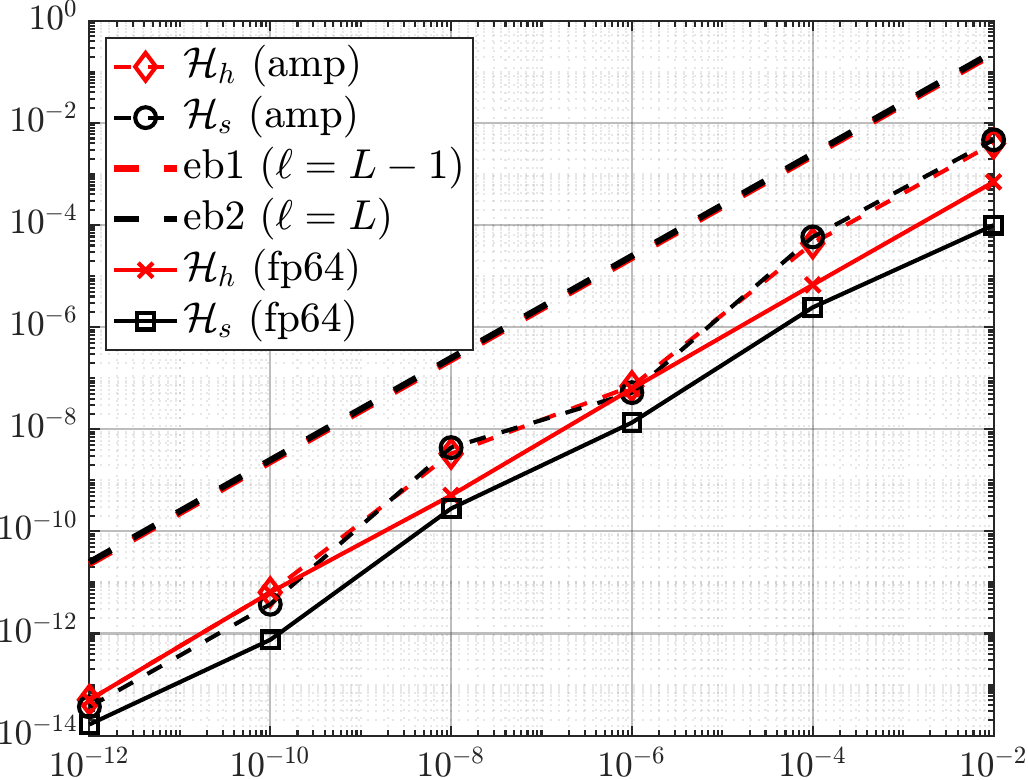}
        \label{fig:log_err_1_2d_p2}
        }
        \subfloat[\scriptsize $L=7$ $(P_{\subcircle})$]{
        \includegraphics[scale=.35]{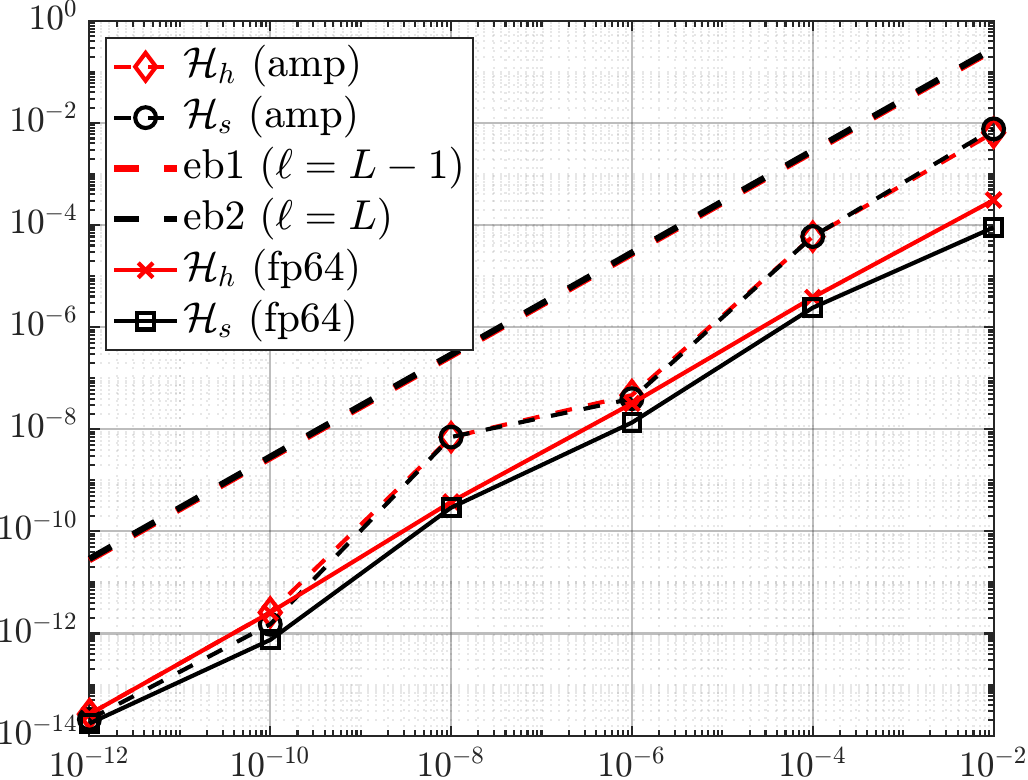}
        \label{fig:log_err_2_2d_p2}
        }
        \caption{Global relative error ($y$-axis) of adaptive mixed precision $\mathcal{H}_h$-matrices for different target accuracies ($x$-axis). We consider the kernel $\log \bkt{r}$ in $2$D, with $N=25600$.}
        \label{fig:log_err_2d_p2}
    \end{figure}

    \begin{figure}
        \centering
        \subfloat[\scriptsize $L=4$ $(P_{\subcircle})$]{
        \includegraphics[scale=.35]{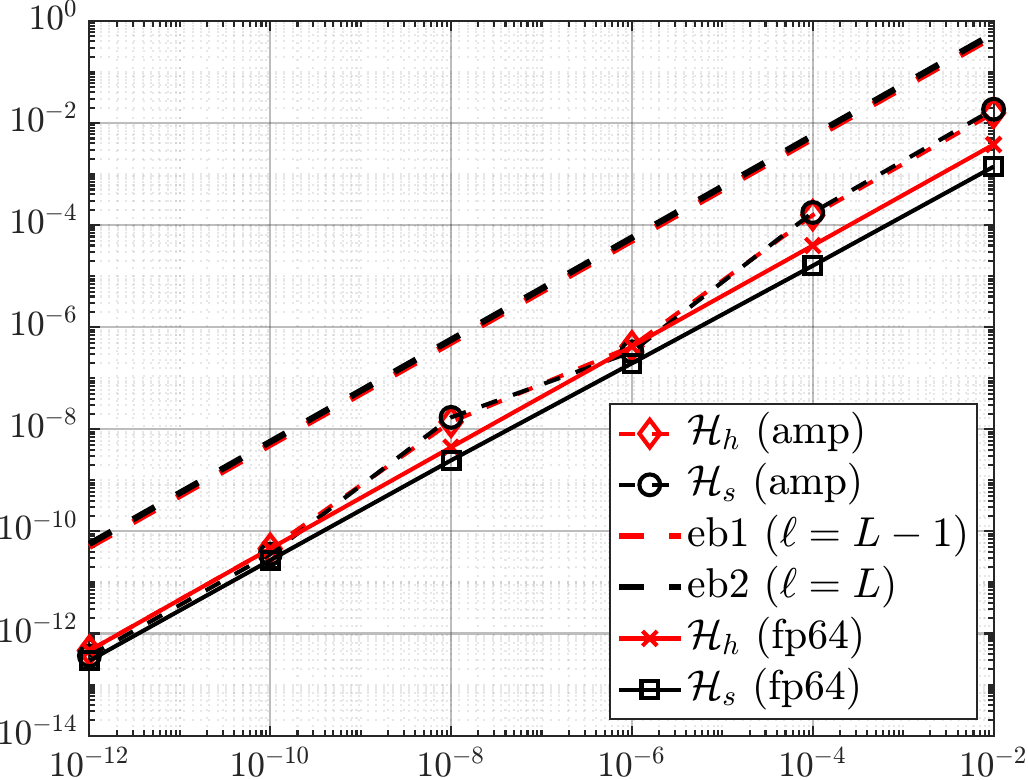}
        \label{fig:1r_err_1_3d_p2}
        }
        \subfloat[\scriptsize $L=5$ $(P_{\subcircle})$]{
        \includegraphics[scale=.35]{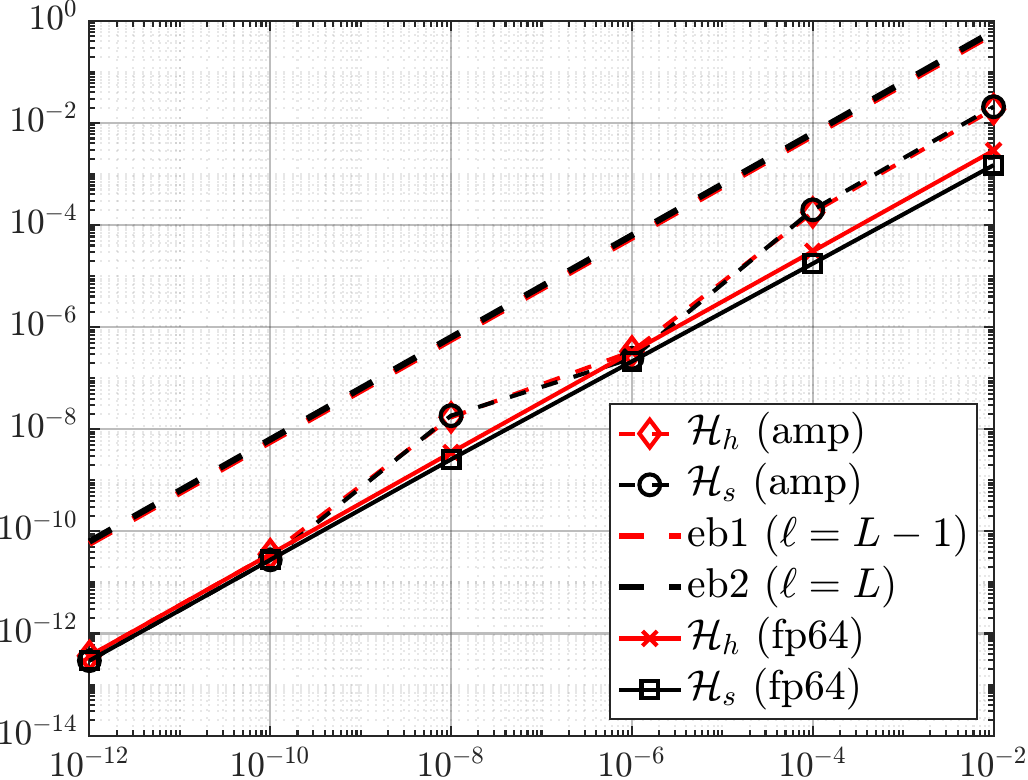}
        \label{fig:1r_err_2_3d_p2}
        }
        \caption{Global relative error ($y$-axis) of adaptive mixed precision $\mathcal{H}_h$-matrices for different target accuracies ($x$-axis). We consider the kernel $1/r$ in $3$D, with $N=64000$.}
        \label{fig:1r_err_3d_p2}
    \end{figure}

    \begin{figure}
        \centering
        \subfloat[\scriptsize $L=4$ $(P_{\subcircle})$]{
        \includegraphics[scale=.35]{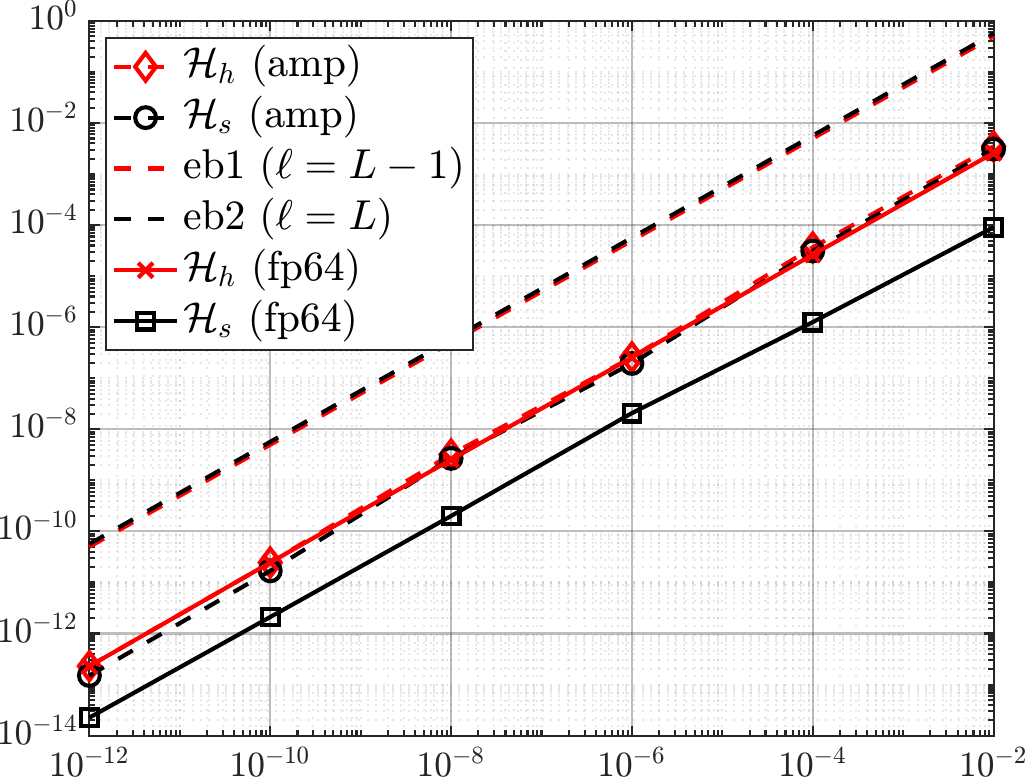}
        \label{fig:1r2_err_1_3d_p2}
        }
        \subfloat[\scriptsize $L=5$ $(P_{\subcircle})$]{
        \includegraphics[scale=.35]{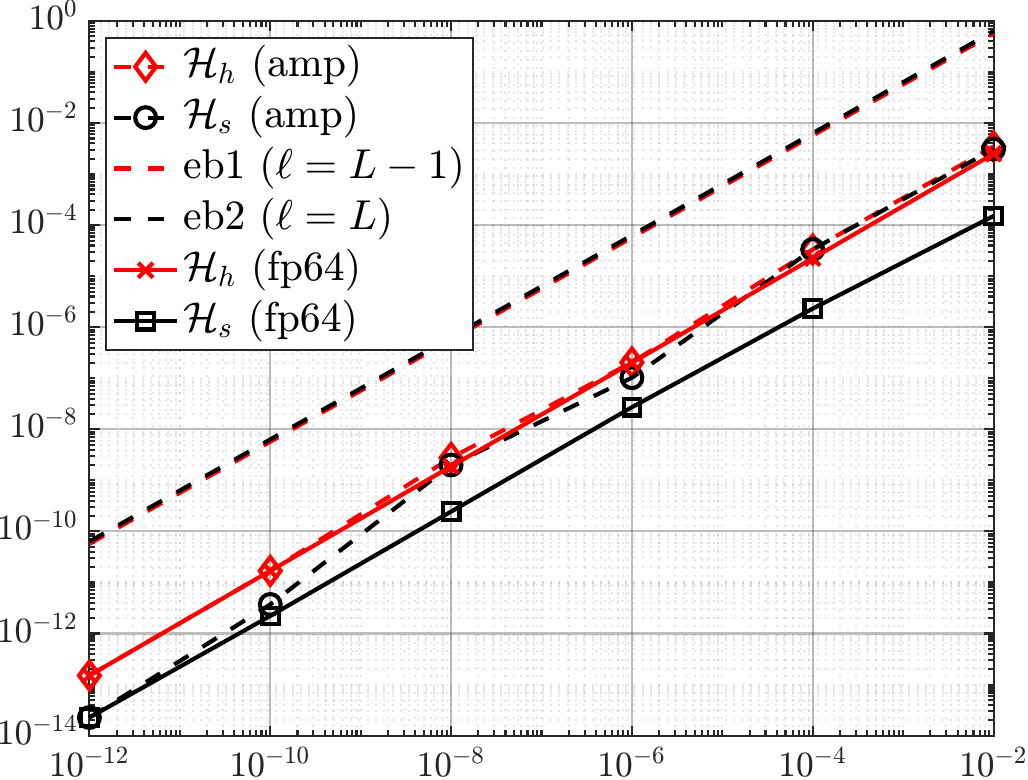}
        \label{fig:1r2_err_2_3d_p2}
        }
        \caption{Global relative error ($y$-axis) of adaptive mixed precision $\mathcal{H}_h$-matrices for different target accuracies ($x$-axis). We consider the kernel $1/r^2$ in $3$D, with $N=64000$.}
        \label{fig:1r2_err_3d_p2}
    \end{figure}

    \begin{figure}
        \centering
        \subfloat[\scriptsize $L=4$ $(P_{\subcircle})$]{
        \includegraphics[scale=.35]{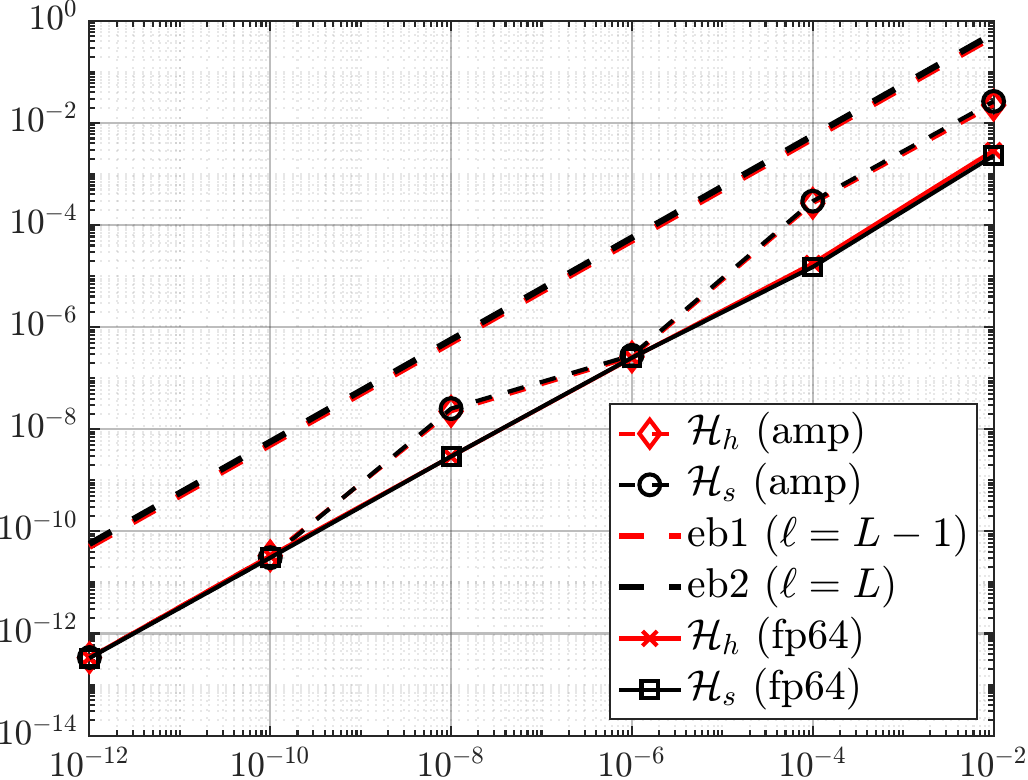}
        \label{fig:gauss_err_1_3d_p2}
        }
        \subfloat[\scriptsize $L=5$ $(P_{\subcircle})$]{
        \includegraphics[scale=.35]{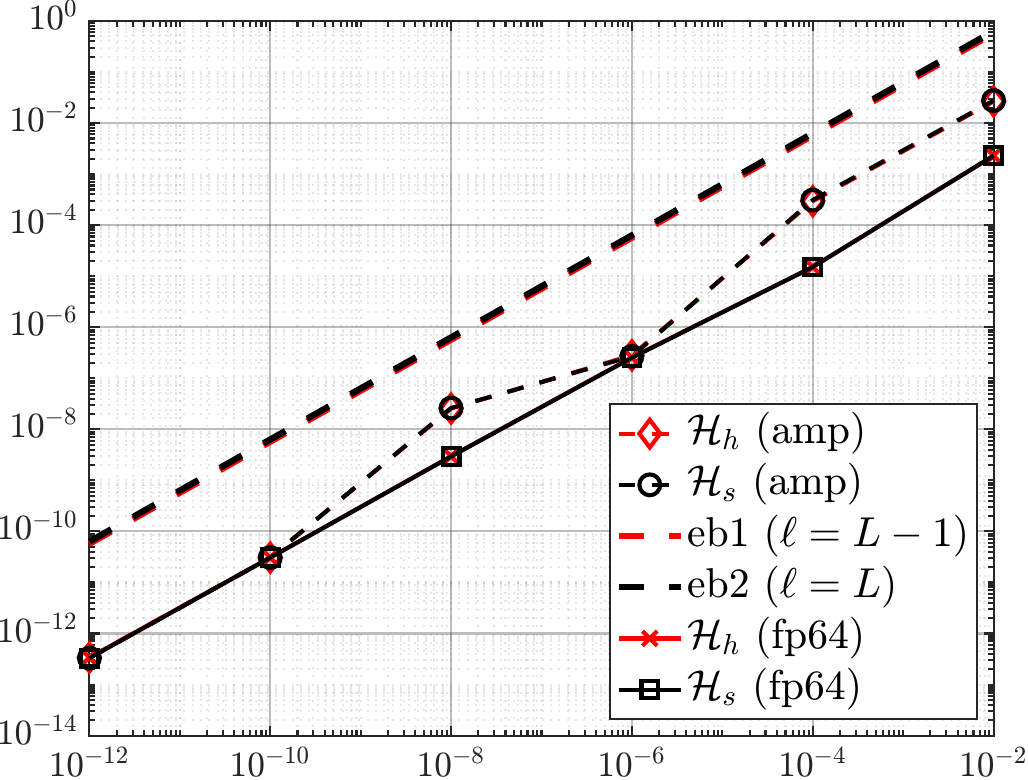}
        \label{fig:gauss_err_2_3d_p2}
        }
        \caption{Global relative error ($y$-axis) of adaptive mixed precision $\mathcal{H}_h$-matrices for different target accuracies ($x$-axis). We consider the Gaussian kernel in $3$D, with $N=64000$.}
        \label{fig:gauss_err_3d_p2}
    \end{figure}

    \begin{figure}
        \centering
        \subfloat[\scriptsize $L=4$ $(P_{\subcircle})$]{
        \includegraphics[scale=.35]{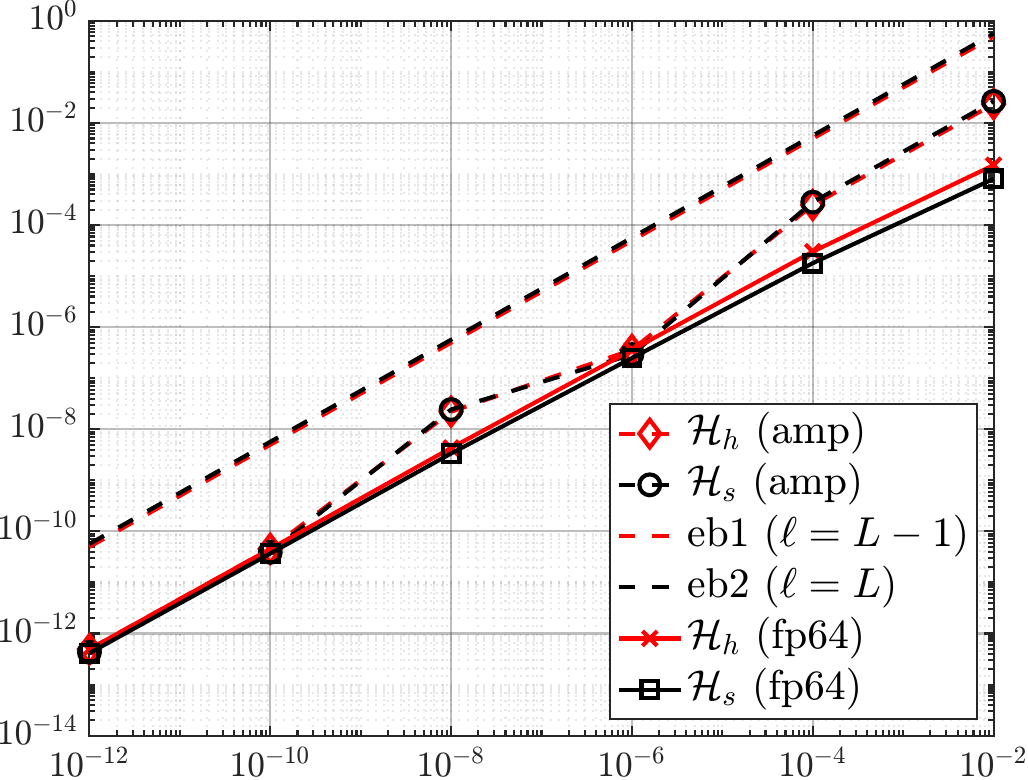}
        \label{fig:mat_err_1_3d_p2}
        }
        \subfloat[\scriptsize $L=5$ $(P_{\subcircle})$]{
        \includegraphics[scale=.35]{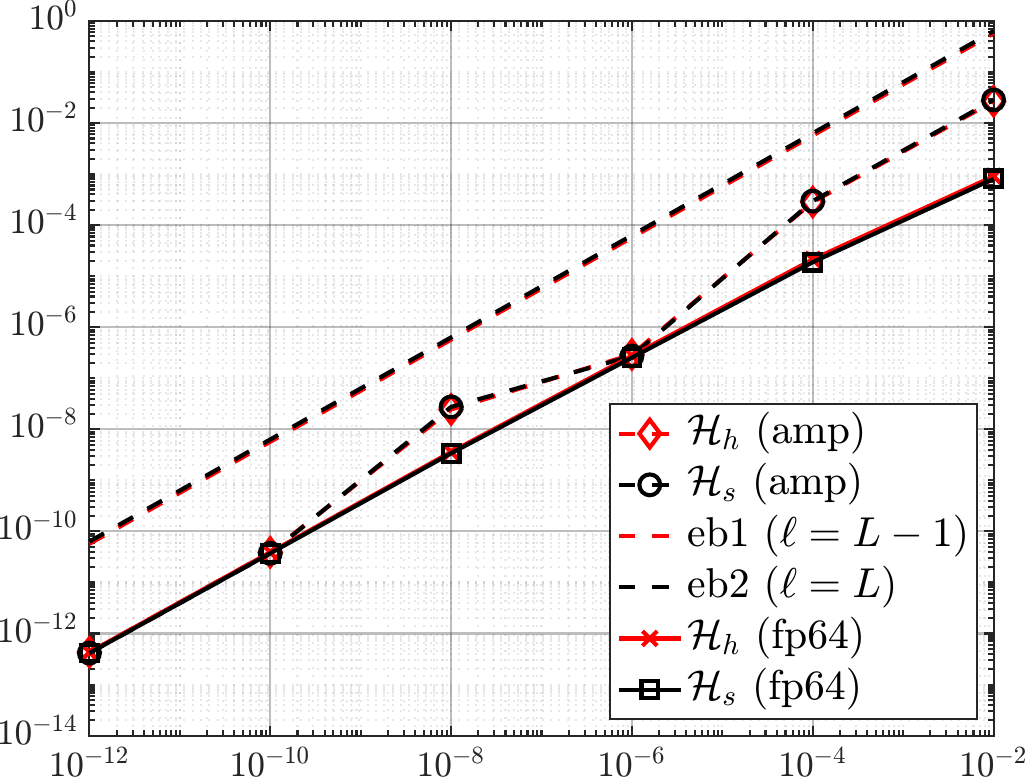}
        \label{fig:mat_err_2_3d_p2}
        }
        \caption{Global relative error ($y$-axis) of adaptive mixed precision $\mathcal{H}_h$-matrices for different target accuracies ($x$-axis). We consider the Matérn kernel in $3$D, with $N=64000$.}
        \label{fig:mat_err_3d_p2}
    \end{figure}

    \subsection{Storage gains in \texorpdfstring{$\widehat{H}$}{Hhat} representation}
    The storage advantages of adaptive mixed precision $\mathcal{H}_h$-matrices over uniform precision representations are illustrated in \Cref{fig:log_mem_2d_p2,fig:1r_mem_3d_p2,fig:1r2_mem_3d_p2,fig:gauss_mem_3d_p2,fig:mat_mem_3d_p2}.
    
    \begin{figure}
        \centering
        \subfloat[\scriptsize $N=25600, L=7$ $(P_{\subcircle})$]{
        \includegraphics[scale=.34]{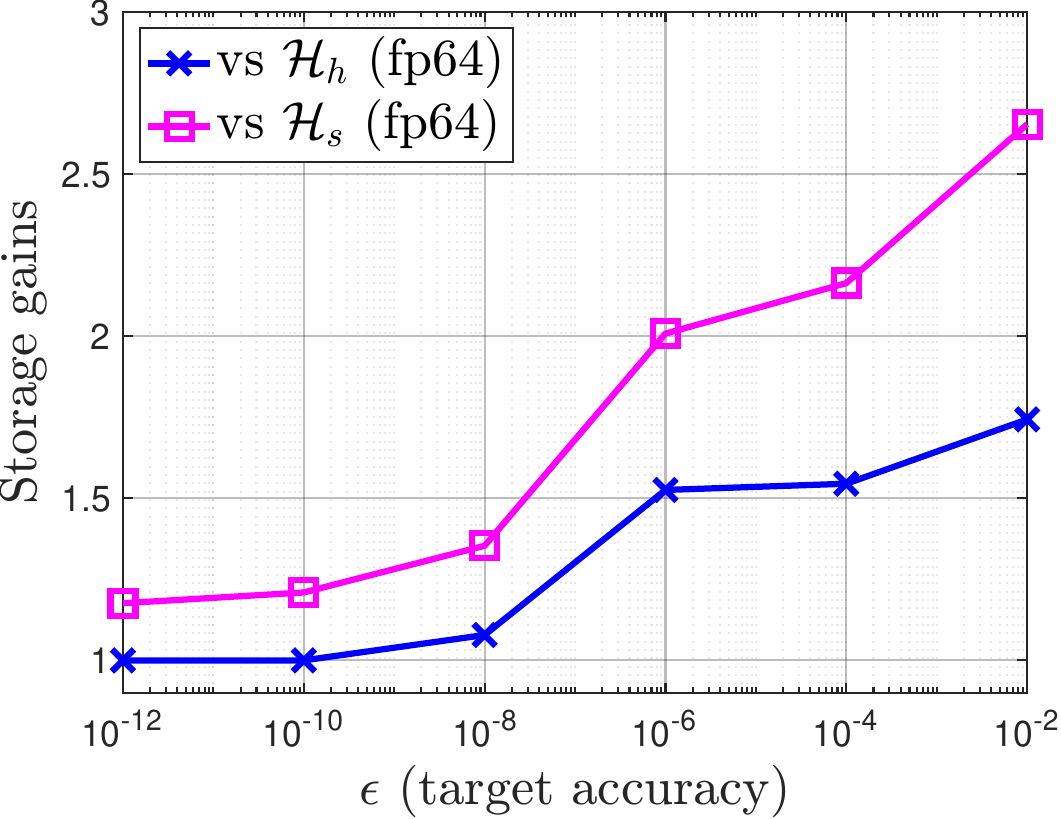}
        \label{fig:log_mem_1_2d_p2}
        }
        \subfloat[\scriptsize $N=102400, L=8$ $(P_{\subcircle})$]{
        \includegraphics[scale=.34]{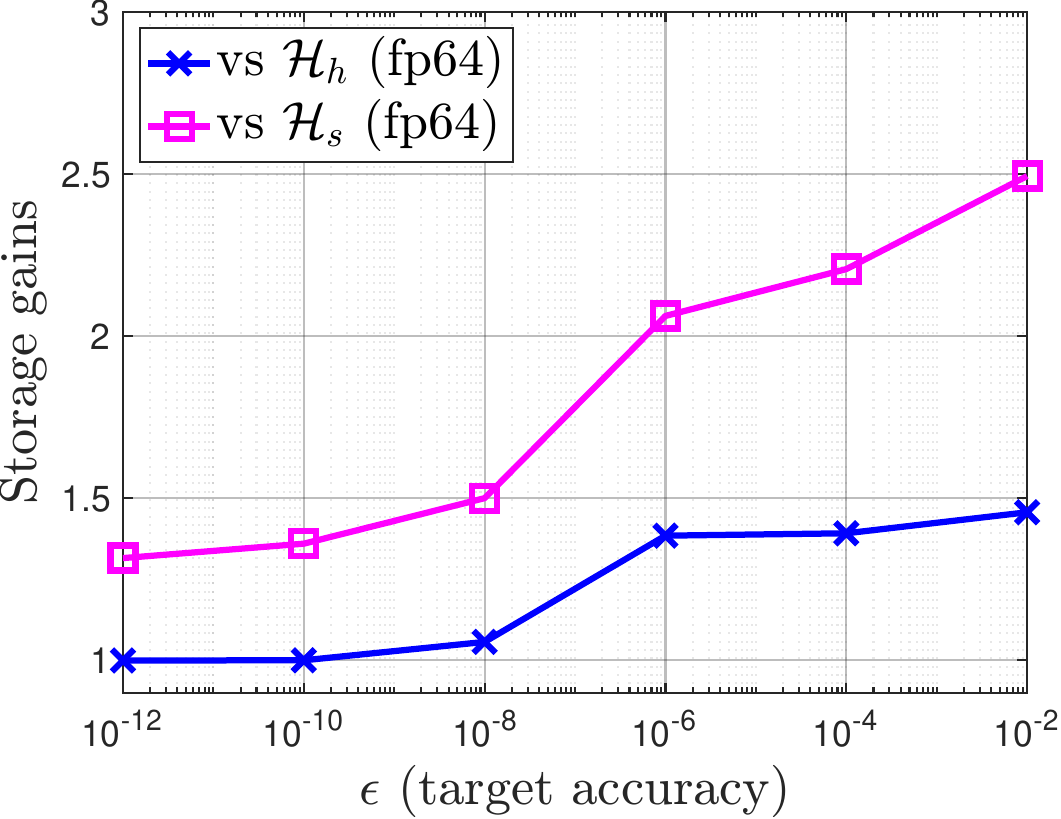}
        \label{fig:log_mem_2_2d_p2}
        }
        \caption{Storage gains of adaptive mixed precision $\mathcal{H}_h$-matrices compared with uniform double precision $\mathcal{H}_h$ and $\mathcal{H}_{s}$ matrices for the kernel $\log \bkt{r}$ in $2$D.}
        \label{fig:log_mem_2d_p2}
    \end{figure}

    \begin{figure}[H]
        \centering
        \subfloat[\scriptsize $N=64000, L=5$ $(P_{\subcircle})$]{
        \includegraphics[scale=.34]{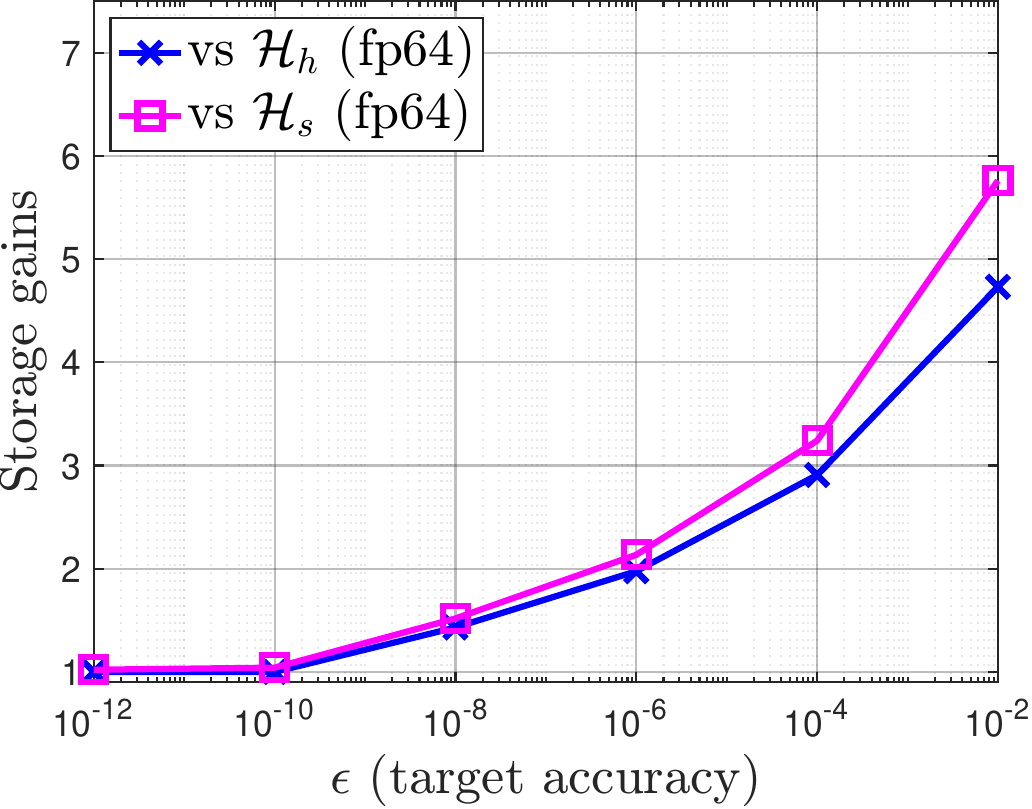}
        \label{fig:1r_mem_1_3d_p2}
        }
        \subfloat[\scriptsize $N=125000, L=5$ $(P_{\subcircle})$]{
        \includegraphics[scale=.34]{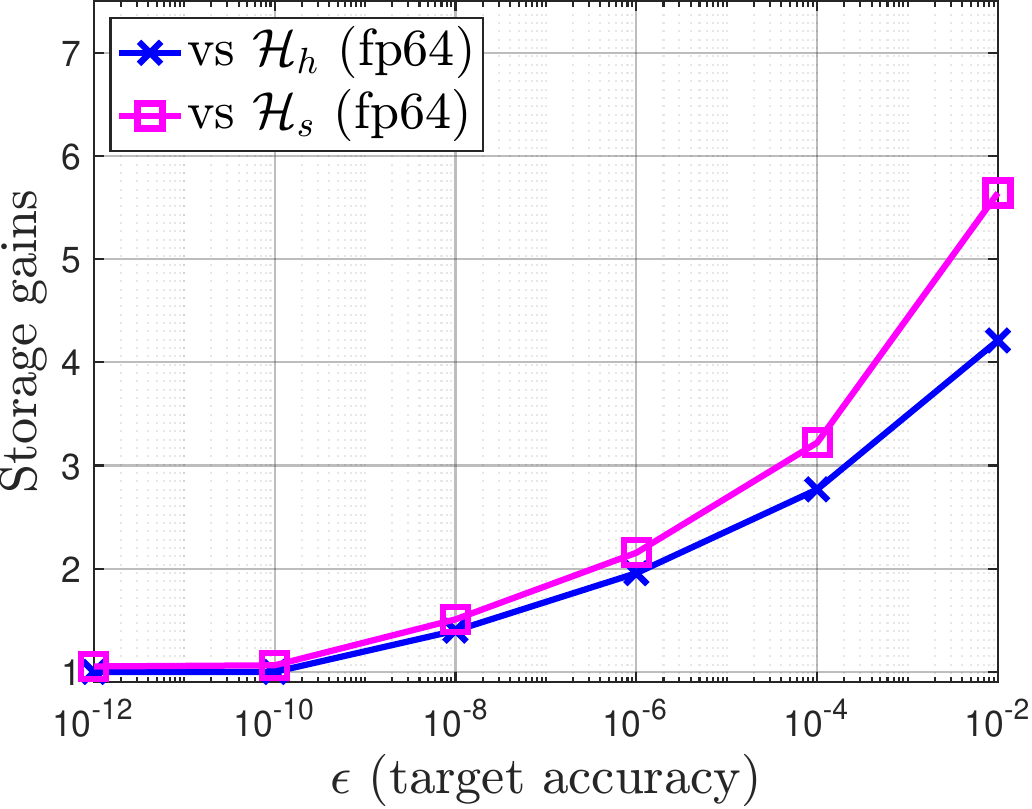}
        \label{fig:1r_mem_2_3d_p2}
        }
        \caption{Storage gains of adaptive mixed precision $\mathcal{H}_h$-matrices compared with uniform double precision $\mathcal{H}_h$ and $\mathcal{H}_{s}$ matrices for the kernel $1/r$ in $3$D.}
        \label{fig:1r_mem_3d_p2}
    \end{figure}

    \begin{figure}[H]
        \centering
        \subfloat[\scriptsize $N=64000, L=5$ $(P_{\subcircle})$]{
        \includegraphics[scale=.34]{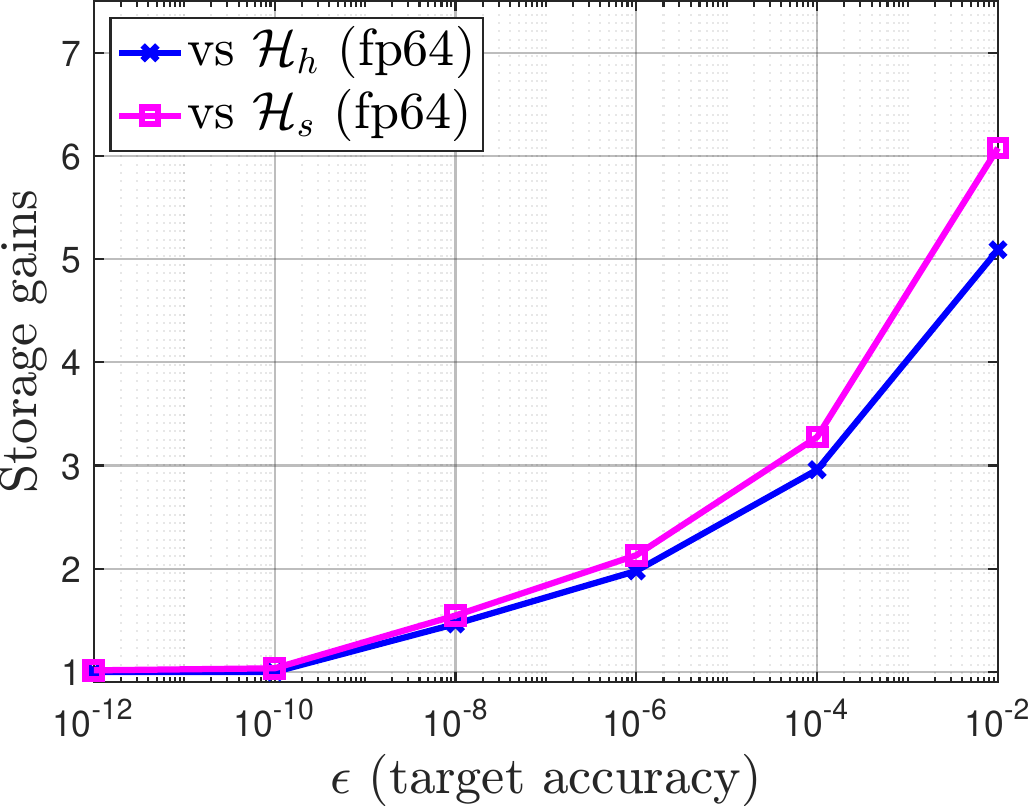}
        \label{fig:1r2_mem_1_3d_p2}
        }
        \subfloat[\scriptsize $N=125000, L=5$ $(P_{\subcircle})$]{
        \includegraphics[scale=.34]{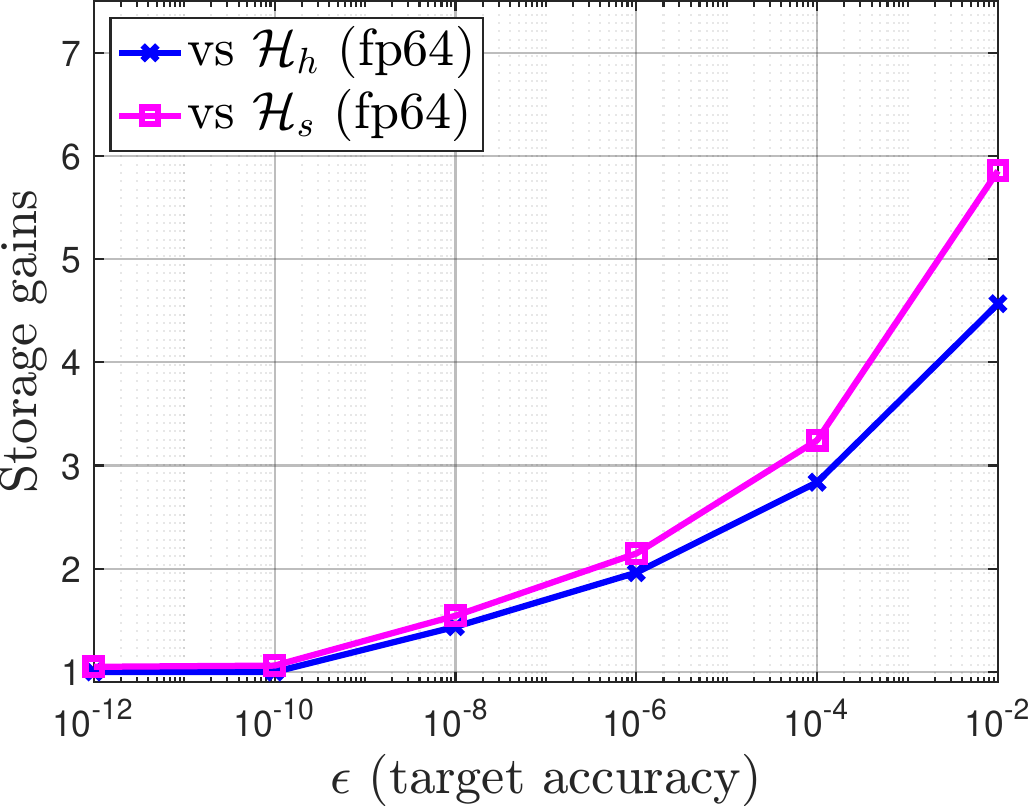}
        \label{fig:1r2_mem_2_3d_p2}
        }
        \caption{Storage gains of adaptive mixed precision $\mathcal{H}_h$-matrices compared with uniform double precision $\mathcal{H}_h$ and $\mathcal{H}_{s}$ matrices for the kernel $1/r^2$ in $3$D.}
        \label{fig:1r2_mem_3d_p2}
    \end{figure}

    \begin{figure}[H]
        \centering
        \subfloat[\scriptsize $N=64000, L=5$ $(P_{\subcircle})$]{
        \includegraphics[scale=.34]{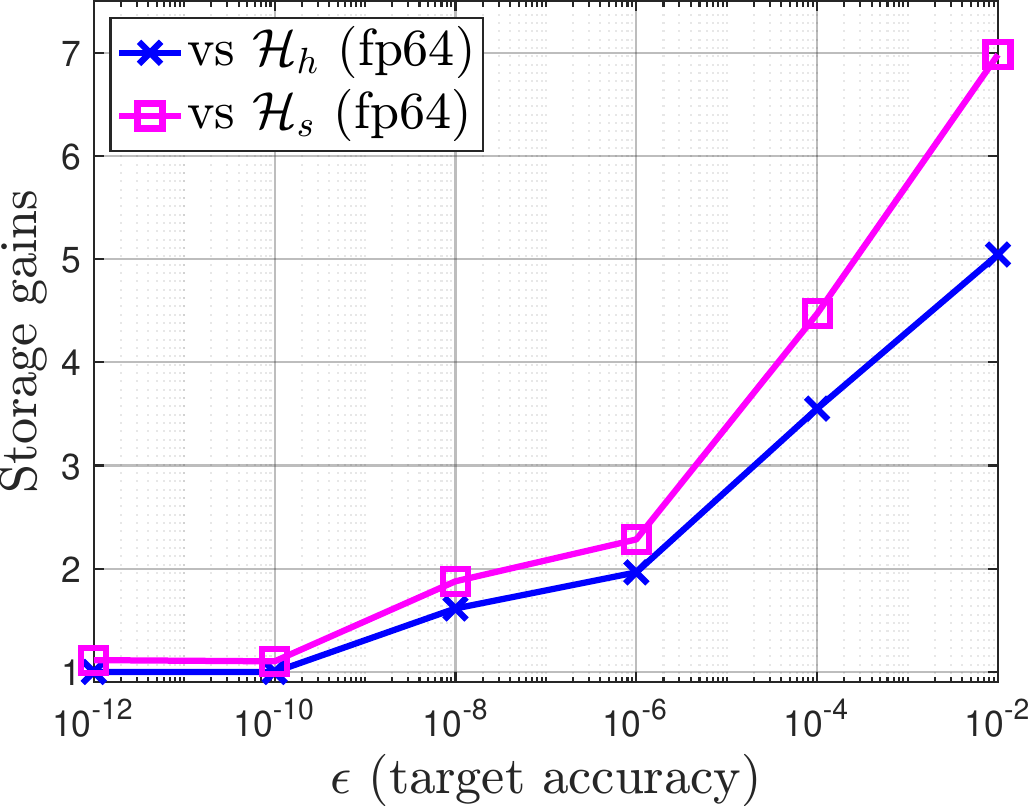}
        \label{fig:gauss_mem_1_3d_p2}
        }
        \subfloat[\scriptsize $N=125000, L=5$ $(P_{\subcircle})$]{
        \includegraphics[scale=.34]{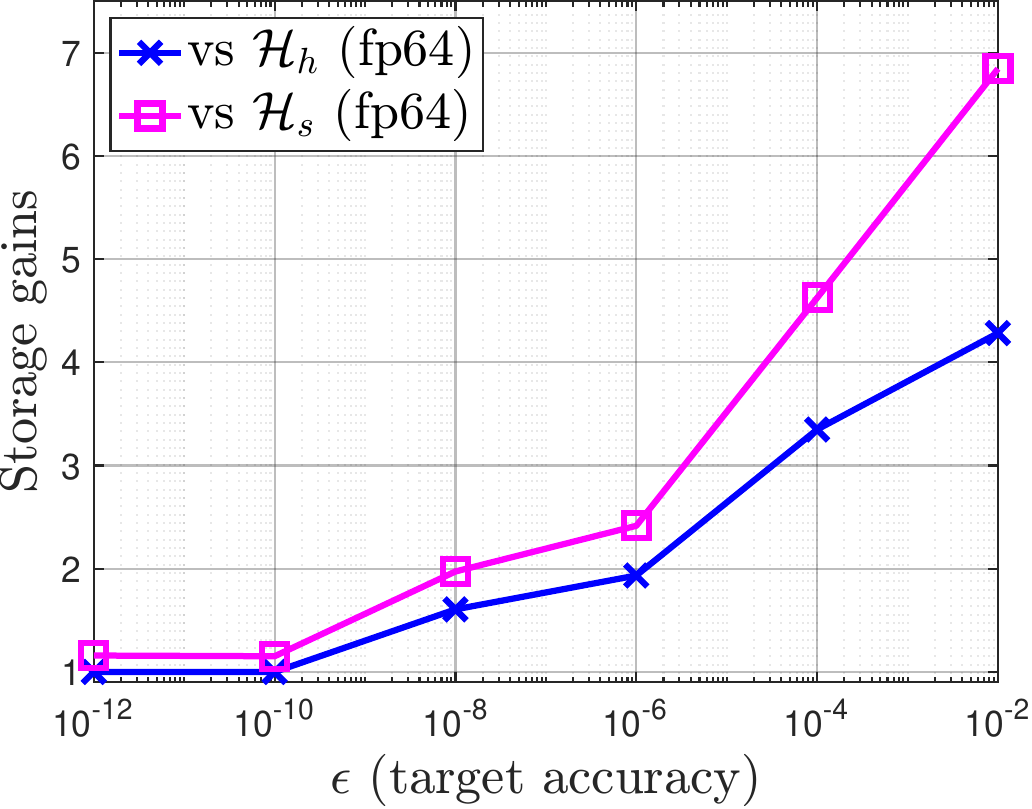}
        \label{fig:gauss_mem_2_3d_p2}
        }
        \caption{Storage gains of adaptive mixed precision $\mathcal{H}_h$-matrices compared with uniform double precision $\mathcal{H}_h$ and $\mathcal{H}_{s}$ matrices for the Gaussian kernel in $3$D.}
        \label{fig:gauss_mem_3d_p2}
    \end{figure}

    \begin{figure}[H]
        \centering
        \subfloat[\scriptsize $N=64000, L=5$ $(P_{\subcircle})$]{
        \includegraphics[scale=.34]{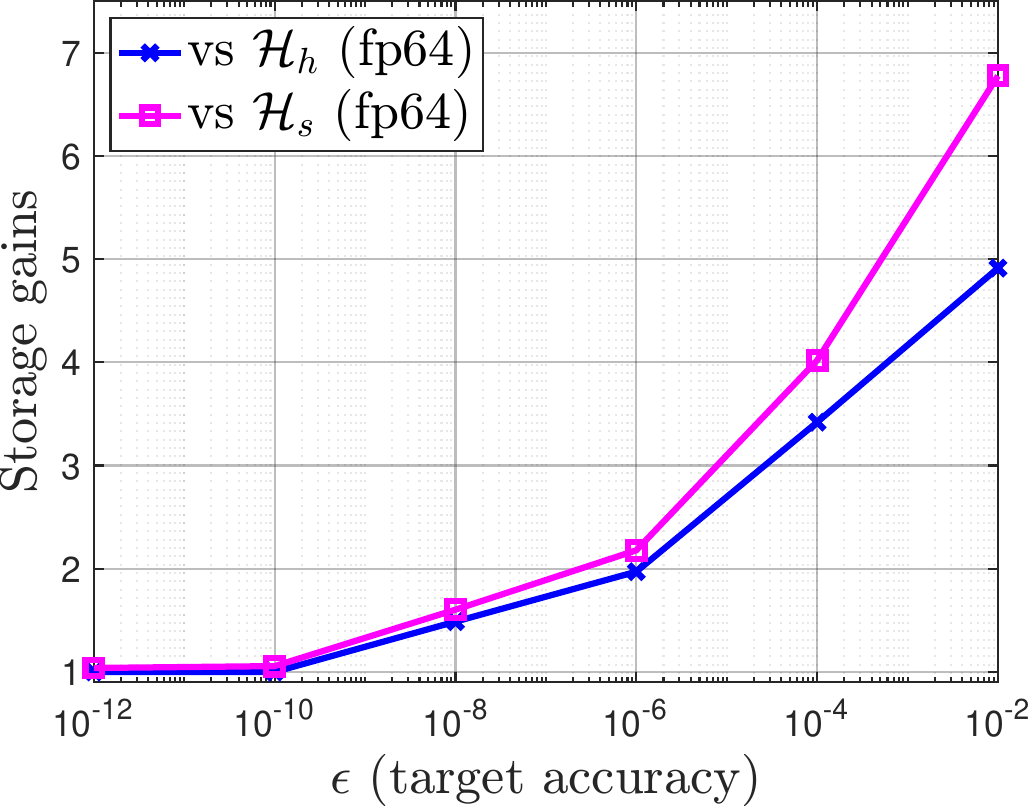}
        \label{fig:mat_mem_1_3d_p2}
        }
        \subfloat[\scriptsize $N=125000, L=5$ $(P_{\subcircle})$]{
        \includegraphics[scale=.34]{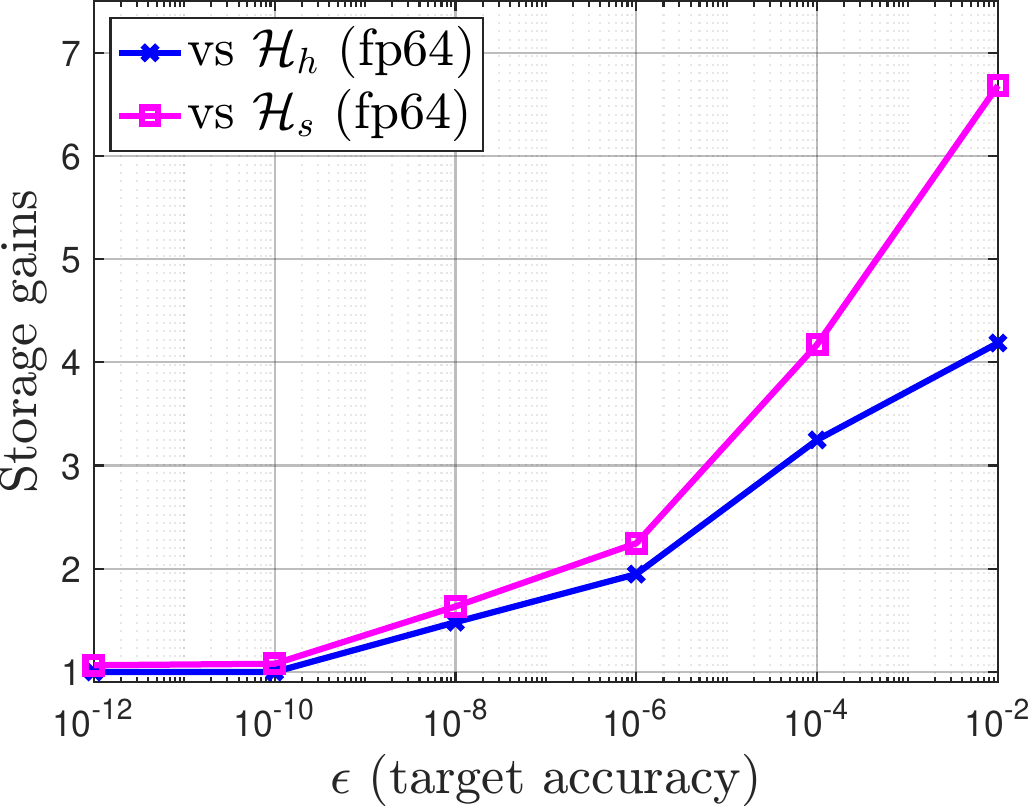}
        \label{fig:mat_mem_2_3d_p2}
        }
        \caption{Storage gains of adaptive mixed precision $\mathcal{H}_h$-matrices compared with uniform double precision $\mathcal{H}_h$ and $\mathcal{H}_{s}$ matrices for the Matérn kernel in $3$D.}
        \label{fig:mat_mem_3d_p2}
    \end{figure}

    \section{Backward error bound of \texorpdfstring{$\widehat{H}x$}{Hx}} 
    We use a lemma from \cite{carson2025mixed} (Lemma 3.4) to prove \Cref{thm:mvp}. The lemma gives a bound on working precision so that the finite precision error in the computed matrix-vector product does not exceed the error due to inexact (low-rank) representation. For convenience, we restate it as follows.

    \begin{lemma} \label{lemma:mvp}
       \emph{[\cite{carson2025mixed}, Lemma 3.4]} Let $\widehat{A}_p$ be an approximation of $A \in \mathbb{R}^{N_b \times N_b}$ such that $\magn{A - \widehat{A}_p} \approx \delta$, for some $\epsilon>0$. Then the error incurred in finite precision computation of $\widehat{y} = \emph{fl} (\widehat{A}_p x)$ does not exceed the error due to the inexact representation, provided that the working precision has unit roundoff $u \leq \delta/(N_b \magn{\widehat{A}_p})$.
    \end{lemma}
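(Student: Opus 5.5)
The plan is to compare the perturbation produced by finite precision arithmetic with the representation error $\delta$, using a normwise backward error bound for the product. We write the computed result as a perturbed exact product, $\widehat{y} = \text{fl}(\widehat{A}_p x) = (\widehat{A}_p + \Delta \widehat{A}_p) x$, so that
\begin{align*}
\widehat{y} = \bigl(A + (\widehat{A}_p - A) + \Delta\widehat{A}_p\bigr) x .
\end{align*}
Here $\widehat{A}_p - A$ is the representation error, with $\magn{\widehat{A}_p - A} \approx \delta$, and $\Delta\widehat{A}_p$ is the rounding error. The claim then reduces to showing $\magn{\Delta \widehat{A}_p} \lesssim \delta$ under the stated condition on $u$.

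\textbf{Dense case.} When $\widehat{A}_p$ is applied as a dense $N_b\times N_b$ matrix, we invoke the standard componentwise result for matrix-vector products \cite{higham2002accuracy}. Each entry of $\widehat{y}$ is an inner product of length $N_b$, so $\abs{\Delta\widehat{A}_p} \leq \gamma_{N_b}\abs{\widehat{A}_p}$ with $\gamma_{N_b} = N_b u/(1-N_b u)$. Taking Frobenius norms gives
\begin{align*}
\magn{\Delta\widehat{A}_p} \leq \gamma_{N_b}\magn{\widehat{A}_p} \lesssim N_b u \magn{\widehat{A}_p}.
\end{align*}
Here we drop the second order term, consistent with the use of $\lesssim$ in the paper. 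Substituting $u \leq \delta/(N_b\magn{\widehat{A}_p})$ yields $\magn{\Delta\widehat{A}_p} \lesssim \delta$. Hence the rounding contribution $\magn{\Delta\widehat{A}_p x} \leq \magn{\Delta\widehat{A}_p}\magn{x}$ is at most of the same size as the representation contribution $\magn{(\widehat{A}_p - A)x} \lesssim \delta\magn{x}$.

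\textbf{Low-rank case.} When $\widehat{A}_p = \widehat{U}\widehat{V}^*$ is applied as $\widehat{U}(\widehat{V}^* x)$, as in \Cref{alg:algo3}, we apply the same componentwise bound to each stage:
\begin{align*}
\text{fl}(\widehat{V}^*x) &= (\widehat{V}+\Delta V)^* x, & \abs{\Delta V} &\leq \gamma_{N_b}\abs{\widehat{V}},\\
\text{fl}(\widehat{U} w) &= (\widehat{U}+\Delta U) w, & \abs{\Delta U} &\leq \gamma_p\abs{\widehat{U}}.
\end{align*}
Expanding the product and dropping second order terms gives $\Delta\widehat{A}_p \approx \Delta U\,\widehat{V}^* + \widehat{U}\,\Delta V^*$, so
\begin{align*}
\magn{\Delta\widehat{A}_p} \lesssim (p+N_b)\,u\,\magn{\widehat{U}}_2\magn{\widehat{V}}.
\end{align*}

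\textbf{Main obstacle.} The step I expect to be hardest is converting $\magn{\widehat{U}}_2\magn{\widehat{V}}$ back into $\magn{\widehat{A}_p}$, since a product of factors is generally not controlled by the norm of the product. I would use the standing assumption that $\widehat{U}$ has columns orthonormal to working precision. Then $\magn{\widehat{U}}_2 \approx 1$ and $\magn{\widehat{V}} \approx \magn{\widehat{U}\widehat{V}^*} = \magn{\widehat{A}_p}$ to first order. Since $p \leq N_b$, the constant $(p+N_b)$ is at most $2N_b$. This factor of $2$ is absorbed in the "$\approx$" of the lemma, or equivalently matches the factor $2$ that appears in \Cref{thm:mvp}, and the same substitution for $u$ concludes the argument.

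The remaining steps are routine: summing the per-block results in \Cref{alg:algo3}, and accounting for the vector additions, which only contribute further $\mathcal{O}(u)$ terms.
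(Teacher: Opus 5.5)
Your dense-case argument is the proof of the statement. The paper gives no argument of its own beyond citing Lemma 3.4 of \cite{carson2025mixed}, and that proof proceeds as you do: write $\mathrm{fl}(\widehat{A}_p x) = (\widehat{A}_p + \Delta\widehat{A}_p)x$ with $\abs{\Delta\widehat{A}_p} \leq \gamma_{N_b}\abs{\widehat{A}_p}$, and compare $\magn{\Delta\widehat{A}_p} \lesssim N_b u \magn{\widehat{A}_p} \leq \delta$ with $\magn{A - \widehat{A}_p} \approx \delta$.

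The low-rank extension is not needed for the lemma, and two steps in it are loose.

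\textbf{The bound on $\Delta U\,\widehat{V}^*$.} The entrywise bound $\abs{\Delta U} \leq \gamma_p \abs{\widehat{U}}$ only gives $\magn{\Delta U}_2 \leq \gamma_p \magn{\abs{\widehat{U}}}_2$. The quantity $\magn{\abs{\widehat{U}}}_2$ can be as large as $\sqrt{p}$ even when $\widehat{U}$ has orthonormal columns (take $p$ columns of a normalized Hadamard matrix). So $\magn{\Delta U\,\widehat{V}^*} \lesssim p u \magn{\widehat{V}}$ does not follow from $\magn{\widehat{U}}_2 \approx 1$ alone. It does hold if you bound column by column and use that $\widehat{V} = W\Sigma$ has orthogonal columns, as for truncated SVD factors: then $\magn{\Delta U\,\Sigma W^*}^2 = \sum_k \sigma_k^2 \magn{\Delta U e_k}_2^2 \leq \gamma_p^2 \magn{\Sigma}^2$. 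Without that structure an extra factor $\sqrt{p}$ appears.

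\textbf{The factor $2$.} The factor $2$ in \Cref{thm:mvp} comes from $\magn{H_F} + \magn{H_{\mathscr{L}}} \leq 2\magn{H_{\mathscr{L}}}$. That step already presupposes the rounding error is at most the representation error, so it cannot absorb your constant $p + N_b \leq 2N_b$. With that constant, the stated condition on $u$ only bounds the rounding error by roughly $2\delta$. To get the conclusion literally you should require $u \leq \delta/((p+N_b)\magn{\widehat{A}_p})$.
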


    \begin{proof}
        See Lemma 3.4 of \cite{carson2025mixed} for the proof.
    \end{proof}

\subsection{Proof of \texorpdfstring{\Cref{thm:mvp}}{thmmvp}} \label{sec:proof_mvp}
We now aim to bound the backward error of the adaptive mixed precision $\mathcal{H}_h$-matrix–vector product (\Cref{alg:algo3}), i.e., $\widehat{H}x$, by verifying whether each block in the representation satisfies the conditions of \Cref{lemma:mvp}.

\begin{proof}
For an admissible block, using \cref{eq:need_mvp} and substituting $u_{i,j}^{\bkt{l}} \leq \dfrac{\epsilon}{2^{dl/2} \xi_{i,j}^{\bkt{l}}}$, we obtain

\begin{align} \label{eq:app_mvp1}
    \magn{\widetilde{H}^{(l)}_{I,J} - \widehat{H}^{(l)}_{I,J}} \lesssim 2 \xi_{i,j}^{\bkt{l}} u_{i,j}^{\bkt{l}} \magn{\widetilde{H}} \leq 2^{1-dl/2} \epsilon \magn{\widetilde{H}} \approx 2^{1-dl/2} \epsilon \magn{H}.
\end{align}

From \cref{eq:adm_block_norm}, we get

    \begin{align} \label{eq:app_mvp2}
        \magn{H^{\bkt{l}}_{I,J} - \widetilde{H}^{\bkt{l}}_{I,J}} \leq \epsilon \magn{H^{\bkt{l}}_{I,J}} \leq \epsilon \magn{H}.
    \end{align}

Applying the triangle inequality to \cref{eq:app_mvp1} and \cref{eq:app_mvp2} gives

\begin{align}
    \magn{H^{(l)}_{I,J} - \widehat{H}^{(l)}_{I,J}} \leq \bkt{2^{1-dl/2}+1} \epsilon \magn{H}.
\end{align}

Therefore, by \Cref{lemma:mvp}, the error in $\text{fl}\bkt{\widehat{H}^{(l)}_{I,J} x_b}$, where $x_b$ is a segment/block of $x$, does not exceed the error in computing the approximation $\widehat{H}^{(l)}_{I,J}$, provided that the working precision

\begin{align}
    u \leq \frac{\bkt{2^{1-dl/2}+1} \epsilon \magn{H}}{N_b \magn{\widehat{H}^{(l)}_{I,J}}}.
\end{align}

The dense diagonal blocks at level $L$ are stored in working precision. Thus,

\begin{align}
    \magn{H^{\bkt{L}}_{I,I} - \widehat{H}^{\bkt{L}}_{I,I}} \leq u \magn{H^{\bkt{L}}_{I,I}} \leq u \magn{H}.
\end{align}

Define $H_{\mathscr{L}} = H - \widehat{H}$, with $H^{\bkt{l}}_{\mathscr{L}_{I,J}}$ denoting its block $I \times J$ at level $l$. If $u \lesssim \epsilon$, then for each block, the finite-precision error in the block-wise matrix–vector product computed using \Cref{alg:algo3} is bounded above by the corresponding low-rank approximation error. Consequently, when $u \lesssim \epsilon$ (more precisely, $u \lesssim \epsilon/N$)

\begin{align} \label{eq:mvp3}
    \widehat{b} = \text{fl} \bkt{\widehat{H}x}= \bkt{\widehat{H} + H_F} x = \bkt{H - H_{\mathscr{L}} + H_F} x, \quad \magn{H^{\bkt{l}}_{F_{I,J}}} \leq \magn{H^{\bkt{l}}_{\mathscr{L}_{I,J}}}.
\end{align}

By using \cref{eq:mvp3} and \cref{eq:mp_thm}, we obtain

\begin{align} \label{eq:mvp4}
\magn{H_F} \leq \magn{H_{\mathscr{L}}} = \magn{H - \widehat{H}} \lesssim \bkt{\sqrt{\bkt{\ell C^{\prime} + C^{\prime\prime} + \bkt{L-\ell} C^{\prime\prime\prime}}} 2 + 1} \epsilon \magn{H}
\end{align}

Using $\Delta H = H_{F} - H_{\mathscr{L}}$ and applying \cref{eq:mvp4} yields

\begin{align}
\begin{aligned}
    \magn{\Delta H} & = \magn{H_{F} - H_{\mathscr{L}}} \leq \magn{H_{F}} + \magn{H_{\mathscr{L}}}  \leq 2 \magn{H_{\mathscr{L}}} \\ & \lesssim 2 \bkt{\sqrt{\bkt{\ell C^{\prime} + C^{\prime\prime} + \bkt{L-\ell} C^{\prime\prime\prime}}} 2 + 1} \epsilon \magn{H}
\end{aligned}
\end{align}
This completes the proof.
\end{proof}

\end{document}